\newtheorem{theorem}{Theorem}[section]
\newtheorem{lemma}[theorem]{Lemma}
\newtheorem{proposition}[theorem]{Proposition}
\newtheorem{corollary}[theorem]{Corollary}
\theoremstyle{definition}
\newtheorem{definition}[theorem]{Definition}
\newtheorem{example}[theorem]{Example}
\newtheorem{fact}[theorem]{Fact}
\theoremstyle{remark}
\newtheorem{remark}[theorem]{Remark}
\newtheorem{note}[theorem]{Note}
\newtheorem{claim}[theorem]{Claim}
\numberwithin{equation}{section}
\newcommand{\prend}{$\hfill \Box$}
\newcommand{\vertiii}[1]{{\left\vert\kern-0.25ex\left\vert\kern-0.25ex\left\vert #1 
    \right\vert\kern-0.25ex\right\vert\kern-0.25ex\right\vert}}
\begin{document}


\title{A probabilistic approach to strong natural boundaries}

\author{Stamatis Dostoglou\footnote{Department of Mathematics, University of Missouri, Columbia MO 65211, 
e-mail: \url{dostoglous@missouri.edu} } \and 
Petros Valettas\footnote{Departments of Mathematics and EECS, University of Missouri, Columbia MO 65211, e-mail: \url{valettasp@missouri.edu} } }








\maketitle

\begin{abstract}
	We study the local non-extendability of random power series beyond their disk of convergence. 
	We show that random power series formed by independent coefficients which are asymptotically anti-concentrated admit the circle of
	radius of convergence as strong natural boundary, even in a Nevanlinna sense. Our results extend 
	previous work of Breuer and Simon (2011) for the case of independent coefficients. 
	Our motivation stems from the study of Pad\'e approximants of random power series as a denoising tool.
	\end{abstract}

\noindent {\footnotesize \emph{2020 Mathematics Subject Classification.} Primary: 30B20; Secondary: 60B20.}

\noindent {\footnotesize \emph{Keywords and phrases.} Rogozin's inequality, probabilistic dichotomies for random analytic functions, $L^p$ natural boundary.} 


\tableofcontents


\section{Introduction}

 The circle with radius equal to the radius of convergence of a power series is a natural boundary (NB) if all points of this circle are singular points. 
 The notion of natural boundary goes back at least to Weierstrass \cite{Weier}, 
 and was developed by Kronecker, Goursat, Hadamard, Fatou, Hurwitz, Fabry, Borel, P\'olya, Szeg\"o, among others (see e.g., \cite{Rem-book}). 
The classical results focus on conditions that imply natural boundary ("gap" conditions, for example) and the construction of specific examples. 
Notably, Agmon in \cite[p.\ 297]{Agmon} gives conditions for the power series to be unbounded in any circular sector of its disc of convergence.

More recently, Breuer and Simon introduced and examined in \cite{BS} the notion of a {\it strong natural boundary} (SNB) for power series with
bounded sequence of coefficients: 
The natural boundary of a power series $f(z) = \sum_{k=0}^\infty c_k z^k$ with radius of convergence $r_f$ is a strong natural boundary if for any circular arc $I$ 
\begin{equation} \label{BSforIntro}
   \sup_{0<r<r_f}\int_I 	|f(re^{i\theta})|d\theta = \infty.
\end{equation}
Informed by analogies from spectral theory, Breuer and Simon  use "right limits," cf.\ \cite[p.\ 4904]{BS}, 
akin to Agmon's conditions in \cite{Agmon}, and  identify as a source of strong natural boundaries the existence of right limits that are not "reflectionless," 
see \cite[p.\ 4905]{BS}. They then use their method to show that in several classical results natural boundaries can be strengthened to strong ones. 
We do not insist on the details of right limits and reflectionless right limits, as our methods here are different and extend beyond the class of power series
with uniformly bounded coefficients. 

For power series with random coefficients natural boundaries were studied by Steinhaus, Borel, Kahane, Ryll-Nardzewski, among others, see \cite{Kah}. 
The distilled wisdom is that random power series have natural boundaries, perhaps after a deterministic perturbation. 
More precisely,  power series with independent, symmetric random coefficients always have their circle of convergence as natural boundary, 
see \cite[p.\ 39]{Kah}. The case when the coefficients are not symmetric is covered by \cite{R-N}:

\begin{theorem} [Ryll-Nardzewski, 1953] \label{thm:R-N}
	Let $\{ X_k \}_{k=0}^\infty$ be a sequence of independent random variables and 
	that $F(\omega ; z) = \sum_{k=0}^\infty X_k(\omega) z^k$ has radius of convergence\footnote{For the normalization $r_F=1$ see the discussion at the beginning of Section \ref{S:2-2}.} $r_F=1$ a.s. 
	Then either $F$ almost surely has the unit circle as natural boundary or 
	there exists deterministic $f(z)=\sum_{k=0}^\infty c_k z^k$ with $r_{F-f}>1$ and $F-f$ almost surely has the circle of radius $r_{F-f}$ as natural boundary.
\end{theorem}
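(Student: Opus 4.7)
The plan is to combine Kolmogorov's zero--one law with the classical result cited in the introduction---that independent symmetric coefficients with radius of convergence $1$ give the unit circle as natural boundary a.s.---bridging the two via symmetrization. For each open arc $I$ of the unit circle the event $A_I = \{F \text{ extends analytically across } I\}$ is tail with respect to $\{X_k\}$, since changing finitely many $X_k$'s modifies $F$ by a polynomial (which is entire). Fixing a countable family of rational arcs exhausting all open arcs on the circle, Kolmogorov's $0$--$1$ law leaves two mutually exclusive cases: either $\mathbb{P}(A_I) = 0$ for every such $I$, in which case $F$ a.s.\ has the unit circle as natural boundary (the first alternative), or some rational arc $I_0$ satisfies $\mathbb{P}(A_{I_0}) = 1$.

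Assume the latter. Introduce an independent copy $\{X_k'\}$ and set $H(z) := F(z) - F'(z) = \sum_{k \ge 0}(X_k - X_k')z^k$, whose coefficients $Y_k := X_k - X_k'$ are independent and symmetric. Since $F$ and $F'$ both extend across $I_0$ a.s., so does $H$; the cited symmetric-coefficient theorem then forces $R := r_H > 1$, and $R$ is deterministic by the $0$--$1$ law. A L\'evy--Rogozin-type symmetrization inequality provides deterministic centers $c_k$ satisfying $\mathbb{P}(|X_k - c_k| \ge t) \le C\,\mathbb{P}(|Y_k| \ge t)$ for an absolute constant $C$. Independence and $\limsup |Y_k|^{1/k} = 1/R$ a.s.\ yield $\sum_k \mathbb{P}(|Y_k| > \rho^k) < \infty$ for every $\rho > 1/R$ (Borel--Cantelli for independent events); the same summability transfers to $|X_k - c_k|$, and a second application of Borel--Cantelli to the independent sequence $X_k - c_k$ gives $\limsup |X_k - c_k|^{1/k} \le 1/R$ a.s. Thus $f(z) := \sum_{k \ge 0} c_k z^k$ satisfies $r_{F - f} \ge R$; the identity $H = (F - f) - (F' - f)$, together with the a.s.\ constancy of $r_{F - f}$ and $r_{F' - f}$, forces $r_{F - f} \le r_H = R$, so $r_{F - f} = R$.

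It remains to show that $F - f$ almost surely has the circle $|z| = R$ as natural boundary. If $F - f$ extended across some rational arc $J$ on this circle, the tail event of extension would have probability $1$, and by identical distribution the analogous event for $F' - f$ would too; hence $H = (F - f) - (F' - f)$ would extend across $J$ as well. But the rescaling $z \mapsto Rz$ turns $H$ into an independent symmetric power series of radius $1$, which by the cited theorem has the unit circle as natural boundary a.s., a contradiction. The main obstacle is the symmetrization step: one needs a L\'evy--Rogozin-type inequality to transfer decay of $|Y_k|^{1/k}$ to decay of $|X_k - c_k|^{1/k}$, with some care in defining the centers $c_k$ in the complex case (via concentration functions, or medians of real and imaginary parts). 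Once this is in hand, the $0$--$1$ law, symmetrization, and the quoted symmetric-coefficient theorem assemble the argument in a clean way.
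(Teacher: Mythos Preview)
The paper does not give a proof of Theorem~\ref{thm:R-N}; it is quoted as the 1953 result of Ryll-Nardzewski and is only \emph{used} (e.g., in the proof of Proposition~\ref{prop:nat-bd}), so there is no in-paper argument to compare against.

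That said, your sketch is a correct reconstruction of the standard symmetrization proof. The $0$--$1$ reduction to a single arc is routine; introducing an independent copy $F'$ and applying the cited symmetric-coefficient theorem to $H=F-F'$ to force $r_H=R>1$ is exactly the right move. The transfer from $\limsup|Y_k|^{1/k}=1/R$ to $\limsup|X_k-c_k|^{1/k}\le 1/R$ via Borel--Cantelli and weak symmetrization is sound; the inequality you invoke is precisely what the paper records as the ``Weak Symmetrization'' fact in Appendix~A (with $c_k={\rm med}(X_k)$ and constant $C=2$), so no Rogozin machinery is needed there. The squeeze $r_{F-f}=R$ from $H=(F-f)-(F'-f)$ and the final natural-boundary step for $F-f$ via rescaling $H$ are both clean. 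The only caveat you correctly flag is the complex case, where one takes medians coordinatewise; in the real-variable setting of the present paper this is immediate. One minor edge case you do not mention: $R=\infty$ (i.e., $H$ entire) is possible, in which case $F-f$ is entire a.s.\ and the ``natural boundary at $|z|=R$'' clause is vacuous.
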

In fact, Ryll-Nardzewski shows that the radius of convergence of the symmetrization 
of the powers series is the biggest radius of convergence for all the deterministic perturbations of the power series. 
For generalizations of \cite{R-N} see \cite{Hol} and the references therein.

Breuer and Simon have also examined the existence of strong natural boundaries for power series with uniformly bounded random coefficients, 
and some of their results partly motivate our work here. 
In particular, Breuer and Simon, cf. \cite[Theorem 6.1]{BS}, prove
\begin{theorem}[Breuer, Simon, 2011] \label{thm:BS}
Let $\{X_k\}_{k=0}^\infty$ be independent random variables with 
\begin{align} \label{BSconditions}
     |X_k|\leq M \text{ a.s.  for all } k\geq 1, \; {\rm and} \; \limsup_k{\rm Var}[X_k] >0.
\end{align}
Then $F(z) =\sum_{k = 0}^\infty X_k z^k$ has strong natural boundary almost surely. 
\end{theorem}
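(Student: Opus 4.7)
I plan to combine a pointwise Paley--Zygmund lower deviation with Kolmogorov's 0--1 law. First, $|X_k| \leq M$ gives $r_F \geq 1$ almost surely; conversely, $\limsup_k \mathrm{Var}(X_k) > 0$ prevents $X_k \to 0$ in probability (for uniformly bounded variables, convergence in probability coincides with $L^2$-convergence, which would force $\mathrm{Var}(X_k) \to 0$), ruling out $r_F > 1$. Thus $r_F = 1$ a.s., and the SNB claim concerns the unit circle.

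Next, for fixed $r \in (0,1)$ and $\theta$, independence yields
$$\mathbb{E}|f(re^{i\theta})|^2 = \sigma^2(r) + |h(re^{i\theta})|^2, \qquad \sigma^2(r) := \sum_{k \geq 0} \mathrm{Var}(X_k)\, r^{2k},$$
with $h(z) = \sum_k (\mathbb{E}X_k)\,z^k$ deterministic and $\sigma^2(r) \to \infty$ as $r \to 1^-$ by the variance hypothesis. Writing $X_k = \mathbb{E}X_k + Y_k$ with $Y_k$ bounded mean-zero independent, a Rosenthal-type expansion of $\mathbb{E}|f|^4$ (using $|Y_k| \leq 2M$ to bound the diagonal fourth moments by $4M^2\sigma^2(r)$) yields the uniform estimate $\mathbb{E}|f(re^{i\theta})|^4 \leq C\,(\mathbb{E}|f(re^{i\theta})|^2)^2$ for $r$ near 1 and all $\theta$. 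Paley--Zygmund applied to $|f(re^{i\theta})|^2$ then produces a constant $c_0 > 0$, independent of $r$ and $\theta$, with $\mathbb{P}(|f(re^{i\theta})| \geq \sigma(r)/\sqrt 2) \geq c_0$.

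Now I fix an arc $I$ (by countability, it suffices to treat arcs with rational endpoints). Fubini gives that the expected Lebesgue measure of $A(r) := \{\theta \in I : |f(re^{i\theta})| \geq \sigma(r)/\sqrt 2\}$ is at least $c_0|I|$; since $|A(r)| \leq |I|$, Markov's inequality yields $\mathbb{P}(|A(r)| \geq c_0|I|/2) \geq c_0/2$, on which event $\int_I |f(re^{i\theta})|\, d\theta \geq (c_0|I|/(2\sqrt 2))\,\sigma(r)$. Since $\sigma(r) \to \infty$, this forces $\mathbb{P}(\sup_{0<r<1} \int_I |f(re^{i\theta})|\, d\theta = \infty) \geq c_0/2 > 0$. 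The event in question is invariant under modification of finitely many coefficients (which changes $f$ by a polynomial bounded on $\overline{\mathbb{D}}$), so it lies in the tail $\sigma$-algebra of $\{X_k\}$; Kolmogorov's 0--1 law upgrades the probability to 1, and a countable union over rational arcs establishes SNB almost surely.

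The main technical obstacle is the uniform fourth-moment bound $\mathbb{E}|f|^4 \leq C\,(\mathbb{E}|f|^2)^2$: one must absorb both the diagonal contribution $\sum \mathbb{E}|X_k|^4 r^{4k}$ and the mixed terms between the random part $g = \sum Y_k z^k$ and the deterministic part $h$ into $(\sigma^2(r) + |h|^2)^2$, which requires careful bookkeeping but is made possible by the uniform bound $|X_k| \leq M$. An alternative route, bypassing the fourth moment, would invoke a Kahane--Khintchine-type inequality for bounded mean-zero independent sums together with a separate concentration-of-measure argument to promote the expected growth of $\int_I |g|$ into almost-sure growth.
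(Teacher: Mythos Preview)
Your argument is correct. The Paley--Zygmund step goes through: writing $f=g+h$ with $g=\sum Y_k z^k$ mean-zero and expanding $\mathbb{E}|f|^4$, the diagonal term $\sum\mathbb{E}|Y_k|^4 r^{4k}\leq 4M^2\sigma^2(r)$ and the third-moment cross term $|\bar h\,\mathbb{E}[|g|^2 g]|\leq 2M|h|\sigma^2(r)$ are both dominated by $(\sigma^2(r)+|h|^2)^2$ once $\sigma(r)\geq\max(1,M)$, so the reverse hypercontractivity $\mathbb{E}|f|^4\leq C(\mathbb{E}|f|^2)^2$ holds uniformly in $\theta$ for $r$ close to $1$. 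The Fubini/reverse-Markov passage and the tail-event upgrade via Kolmogorov's $0$--$1$ law are exactly as in the paper's Fact~2.5 and Proposition~2.6.

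However, the route is genuinely different from both the original Breuer--Simon proof and the paper's own methods. The paper does not prove Theorem~\ref{thm:BS} directly; it is quoted from \cite{BS}, where the argument proceeds via \emph{right limits} (extracting two distinct subsequential limits of coefficient strings so that one is not reflectionless). The paper instead recovers and strengthens the result through Theorems~\ref{thm:s-nat-bd} and~\ref{thm:snb-be}: there the key step is a decaying \emph{small-ball} estimate $P(|F(re^{i\theta})|\leq t)\lesssim t/\sqrt{A(r)}$ obtained from Rogozin's inequality (or Berry--Esseen), which is transferred to the arc integral via the mixture Lemma~\ref{lem:mix-sb} and then summed by Borel--Cantelli. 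Your approach replaces this by a cruder but entirely elementary second--fourth moment bound that yields only a \emph{uniform} positive probability $c_0$, not a decaying one; the $0$--$1$ law then does the work that Borel--Cantelli does in the paper. What the paper's approach buys is extension to the weaker hypotheses $\sum_k{\rm Var}[X_k]=\infty$ (and beyond, to the anti-concentration conditions \eqref{eq:cond-nb}, \eqref{eq:cond-nb-2}) without any moment assumptions; what your approach buys is avoiding Rogozin/Berry--Esseen altogether, at the cost of leaning on the uniform bound $|X_k|\leq M$ for the fourth-moment control. Note also that your argument, as written, already works under the weaker hypothesis $\sum_k{\rm Var}[X_k]=\infty$ (since that alone gives $\sigma^2(r)\to\infty$), so it in fact proves the paper's Theorem~\ref{thm:snb-be} for the $L^1$ case, and the $\psi$-(SNB) version follows immediately since on your good event $\int_I\psi(|f|)\,d\theta\geq |A(r)|\,\psi(\sigma(r)/\sqrt2)\to\infty$.
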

As the approach of Breuer and Simon is to find for each arc of the circle of convergence a single "non-reflectionless" right limit, 
they work to produce two right limits, different enough so that if one is reflectionless the other is not. 
That is where their condition $\limsup_k {\rm Var}[X_k] >0 $ becomes instrumental in their approach, 
as positive variance implies separation of values of random variables, cf. \cite[Lemma 6.2]{BS}.

Our motivation comes from studying Pad\'e approximants of a random power series $F$, especially as used for signal denoising, see \cite{DV}, \cite{DPV}.  
As is well known, the $[m,n]$-Pad\'e approximant is a rational function with power series that agrees with $F$ up to the first $m+n+1$ terms. 
As rational functions, Pad\'e approximants are defined on the whole complex plane. 
For power series with independent random coefficients (the pure noise case) it is conjectured that the 
Pad\'e poles cluster around the circle of convergence as the degree of the Pad\'e denominator increases (and similarly for the zeros of the Pad\'e numerator). 
For the kind of random power series $F$ we examine in this article the circle of convergence is a (strong) natural boundary. 
It is therefore reasonable to investigate as much as possible the nature of this boundary and to determine to what extent, if any, 
it forms as the limit of clustering Pad\'e poles.\footnote{Indeed, for certain quasi-analytic functions with natural boundary, Pad\'e approximants converge (in capacity) to the function even beyond the natural boundary \cite{GN}.}
This article consists of our results so far  in the direction of elucidating the nature of the natural boundary for random power series with this direction in mind. We believe that the results are also of independent interest thanks to their connection with the work of Breuer and Simon \cite{BS} (and the more classical work of Ryll-Nardzewski \cite{R-N}).

During our investigation \cite{DV} of the Pad\'e approximation of random power series, 
it became clear that the appearance of (mere) natural boundaries for random power series can be thought of as a result of 
mild anti-concentration properties of the coefficients of the series, see \cite[\S 4.1]{DV}. More precisely, for 
\begin{align}
			Q(\xi, \lambda) 
			:= 
			\sup_{v\in \mathbb R} 
			\mathbb P(v \leq \xi \leq v + \lambda)
\end{align}
the L\'evy concentration function, Theorem 4.2 in \cite{DV} reads:

\begin{theorem}
Let $\{X_k\}_{k=0}^\infty$ be independent random variables with $\sup_k \mathbb{E}|X_k| <\infty$ 
and $\liminf_k Q(X_k, \varepsilon) <1$ for some $\varepsilon>0$. 
Then the power series $F(z)=\sum_{k = 0}^\infty X_k z^k$ a.s. has radius of convergence $r_F=1$, and the unit circle as natural boundary. 
\end{theorem}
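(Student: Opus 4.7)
The natural plan is to reduce to Ryll-Nardzewski's dichotomy (Theorem \ref{thm:R-N}) and rule out its second alternative using the anti-concentration hypothesis. That is, I would first check that $F$ satisfies the hypothesis of Theorem \ref{thm:R-N} (radius of convergence equal to $1$ almost surely), and then show that the existence of a deterministic perturbation $f$ with $r_{F-f}>1$ would force $Q(X_k,\varepsilon)$ to tend to $1$, contradicting $\liminf_k Q(X_k,\varepsilon)<1$.

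First I would verify that $r_F=1$ a.s. For the upper bound on $1/r_F$, the assumption $\sup_k \mathbb{E}|X_k|\leq M$ gives, via Markov's inequality, $\mathbb{P}(|X_k|>k^2)\leq M/k^2$; the first Borel-Cantelli lemma then yields $|X_k|\leq k^2$ eventually a.s., hence $\limsup_k|X_k|^{1/k}\leq 1$ a.s. For the matching lower bound, extract from $\liminf_k Q(X_k,\varepsilon)<1$ a $\delta>0$ and an infinite set $N\subseteq\mathbb{N}$ with $Q(X_k,\varepsilon)\leq 1-\delta$ for $k\in N$. Since $[-\varepsilon/2,\varepsilon/2]$ is an interval of length $\varepsilon$, this gives $\mathbb{P}(|X_k|>\varepsilon/2)\geq \delta$ for $k\in N$, and independence together with the second Borel-Cantelli lemma shows $|X_k|>\varepsilon/2$ for infinitely many $k\in N$ a.s., so $\limsup_k|X_k|^{1/k}\geq 1$ a.s.

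Now I would apply Theorem \ref{thm:R-N}. In the first alternative we are done. In the second alternative there is a deterministic $f(z)=\sum_k c_k z^k$ with $r_{F-f}>1$ a.s. (the radius is an a.s.\ constant by Kolmogorov's 0-1 law applied to the tail event $\{\limsup_k|X_k-c_k|^{1/k}<\rho\}$). Fixing $\rho\in(1/r_{F-f},1)$, the events $A_{n_0}:=\bigcap_{k\geq n_0}\{|X_k-c_k|\leq \rho^k\}$ are increasing with union of probability $1$, so $\mathbb{P}(A_{n_0})\to 1$; consequently, for $k\geq n_0$ one has $\mathbb{P}(|X_k-c_k|\leq \rho^k)\geq \mathbb{P}(A_{n_0})$, which forces $\mathbb{P}(|X_k-c_k|\leq \rho^k)\to 1$. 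Choosing $k$ so large that $\rho^k\leq \varepsilon/2$, the interval $[c_k-\varepsilon/2,c_k+\varepsilon/2]$ has length $\varepsilon$ and contains $\{|X_k-c_k|\leq \rho^k\}$, so
\[
Q(X_k,\varepsilon)\geq \mathbb{P}\bigl(X_k\in[c_k-\varepsilon/2,c_k+\varepsilon/2]\bigr)\geq \mathbb{P}(|X_k-c_k|\leq \rho^k)\xrightarrow[k\to\infty]{}1,
\]
contradicting $\liminf_k Q(X_k,\varepsilon)<1$. Hence the second alternative cannot occur and the unit circle is almost surely a natural boundary for $F$.

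The main conceptual step is the translation of the geometric statement $r_{F-f}>1$ into the pointwise bound $\mathbb{P}(|X_k-c_k|\leq \rho^k)\to 1$; this is where independence (through the Kolmogorov zero-one determinism of the radius) meets the anti-concentration hypothesis. The other steps are of Borel-Cantelli type and essentially routine. I do not expect any serious obstacle beyond being careful that the perturbation in Theorem \ref{thm:R-N} is genuinely deterministic, so that $Q(X_k,\varepsilon)$ is controlled by the centred variable $X_k-c_k$ without extra randomness.
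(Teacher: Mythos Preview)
Your proof is correct and follows essentially the same strategy as the paper's proof of the (slightly more general) Proposition~\ref{prop:nat-bd}: establish $r_F=1$ a.s.\ via Borel--Cantelli, then invoke Ryll-Nardzewski's dichotomy and rule out the second alternative by showing that $r_{F-f}>1$ forces the $X_k$ to concentrate near the deterministic $c_k$. The only cosmetic difference is in the contradiction step: the paper notes that $r_{F-f}>1$ gives $\sum_n|X_n-c_n|<\infty$ a.s.\ and applies the contrapositive of the second Borel--Cantelli lemma to get $\sum_n(1-Q(X_n,\delta))\leq\sum_n P(|X_n-c_n|>\delta/2)<\infty$, whereas you pass through the $0$--$1$ law and an increasing-events argument to obtain $Q(X_k,\varepsilon)\to1$ directly.
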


Here we follow this path into the territory of strong natural boundaries. 
In particular, in Section \ref{s:3-1} we first present a weaker anti-concentration condition for natural boundaries: 

\begin{proposition} 
	Let $\{X_k\}_{k=0}^\infty$ be  a sequence of independent random variables with 
\begin{equation} \label{unweighted}
	\sum_{k=0}^\infty \left(1- Q(X_k,\varepsilon) \right) =   \infty
\end{equation}
 for some $\varepsilon>0$. Then if the power series $F(z) = \sum_{k = 0}^\infty X_k z^k$ has radius of converge $1$, it a.s. has the unit circle as natural boundary.
\end{proposition}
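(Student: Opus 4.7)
The plan is to invoke Ryll-Nardzewski's dichotomy (Theorem \ref{thm:R-N}) and rule out the non-trivial alternative by a second Borel-Cantelli argument. Since the random series $F(\omega;z) := \sum_k X_k(\omega) z^k$ is assumed to have radius of convergence $1$ almost surely, Theorem \ref{thm:R-N} asserts that either $F$ has the unit circle as its natural boundary a.s., or else there exists a deterministic power series $f(z) = \sum_k c_k z^k$ with $r_{F-f} > 1$. It therefore suffices to show that the second alternative contradicts hypothesis (\ref{unweighted}).

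Suppose, for contradiction, that such an $f$ exists. Then $r_{F-f} > 1$ forces $\limsup_k |X_k - c_k|^{1/k} < 1$ almost surely, so $|X_k - c_k| < \varepsilon/2$ for all sufficiently large $k$ with probability one; equivalently, the event $A_k := \{|X_k - c_k| > \varepsilon/2\}$ occurs only finitely often a.s. On the other hand, by the very definition of the L\'evy concentration function,
\[
\mathbb{P}(|X_k - c_k| \leq \varepsilon/2) \,=\, \mathbb{P}\bigl(X_k \in [c_k - \varepsilon/2,\, c_k + \varepsilon/2]\bigr) \,\leq\, Q(X_k, \varepsilon),
\]
so $\mathbb{P}(A_k) \geq 1 - Q(X_k, \varepsilon)$. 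Hypothesis (\ref{unweighted}) then yields $\sum_k \mathbb{P}(A_k) = \infty$, and since the $X_k$, hence the $A_k$, are independent, the second Borel-Cantelli lemma gives $\mathbb{P}(A_k \text{ i.o.}) = 1$, a contradiction.

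I do not expect a serious obstacle: the argument is a short pairing of Theorem \ref{thm:R-N} with Borel-Cantelli. The only mild subtlety is the centering choice $[c_k - \varepsilon/2,\, c_k + \varepsilon/2]$, whose length is exactly $\varepsilon$, so that the tail probability $\mathbb{P}(A_k)$ is bounded below by the anti-concentration deficit $1 - Q(X_k, \varepsilon)$. It is worth noting that (\ref{unweighted}) is strictly weaker than the condition $\liminf_k Q(X_k, \varepsilon) < 1$ used in the preceding theorem: the latter yields a uniform lower bound $1 - Q(X_{k_j}, \varepsilon) \geq \delta > 0$ along a subsequence, which already forces the series in (\ref{unweighted}) to diverge.
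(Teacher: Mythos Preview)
Your proof is correct and follows essentially the same approach as the paper: invoke Ryll-Nardzewski's dichotomy, then use the bound $P(|X_k-c_k|>\varepsilon/2)\geq 1-Q(X_k,\varepsilon)$ together with the second Borel--Cantelli lemma to rule out the existence of a deterministic shift with larger radius of convergence. The paper phrases the contradiction as $\sum_n P(|X_n-c_n|>\delta/2)<\infty$ for every $\delta>0$, but this is only a cosmetic difference from your direct choice $\delta=\varepsilon$.
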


We then identify, in the same section, a weighted version of the anti-concentration condition \eqref{unweighted} for strong natural boundaries:

\begin{theorem} \label{thm:s-nat-bd}
Let $\{X_k\}_{k=0}^\infty$ be a sequence of independent random variables and 
assume that the random power series $F(z) = \sum_{k=0}^\infty X_k z^k$ has radius of convergence $1$ a.s. If
for some bounded sequence $(t_k)$, $t_k\geq 0$, we have	
	\begin{align} \label{anti_in_Intro}
		\sum_{k=0}^\infty t_k^2 \left( 1 - Q(X_k, t_k) \right) = \infty,
	\end{align}
then the power series $F(z) =\sum_{k = 0}^\infty X_k z^k$ has a.s. the unit circle as strong natural boundary. 
\end{theorem}

This is included in the somewhat more general Theorem \ref{thm:s-nat-bd} below.
The proof relies on the standard Rogozin inequality (Lemma \ref{lem:Rogo}), and a small-ball estimate under mixtures (Lemma \ref{lem:mix-sb}). 

Section \ref{SymmetricSection} applies the main result in the case of symmetric random $X_k$'s. 
Building on the Rademacher case (Proposition \ref{prop:Rad-snb}), 
we characterize when the natural boundary (always present according to \cite{R-N, Kah} for symmetric coefficients) is a strong one:

\begin{theorem} 
	Let $\{X_k\}_{k=0}^\infty$ be a sequence of independent and symmetric random variables and assume that the random
	power series $F(z) = \sum_{k=0}^\infty X_k z^k$ has radius of convergence $1$ a.s. 
	Then the unit circle is strong natural boundary for $F$ if and only if $\sum_{k=0}^\infty |X_k|^2 = \infty$ a.s.
\end{theorem}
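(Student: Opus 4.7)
My plan here is to observe that on the event $\{\sum_k |X_k|^2 < \infty\}$ the function $F$ lies in the Hardy class $H^2$ of the unit disk. Orthogonality of $\{z^k\}$ on $|z| = r$ gives
\[
\frac{1}{2\pi} \int_0^{2\pi} |F(re^{i\theta})|^2 \, d\theta = \sum_{k=0}^\infty |X_k|^2 r^{2k} \leqslant \sum_{k=0}^\infty |X_k|^2,
\]
and Cauchy--Schwarz then yields $\sup_{0<r<1} \int_I |F(re^{i\theta})|\, d\theta \leqslant \bigl(2\pi |I| \sum_k |X_k|^2\bigr)^{1/2} < \infty$ for every arc $I$, contradicting \eqref{BSforIntro}. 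Since $\{\sum_k |X_k|^2 < \infty\}$ is a tail event of the independent sequence $\{|X_k|\}$, Kolmogorov's $0$--$1$ law forces it to have probability $0$ or $1$; hence if SNB holds almost surely, then $\sum_k |X_k|^2 = \infty$ almost surely.

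\textbf{Reverse direction ($\Leftarrow$).} Here I would reduce to the Rademacher case by conditioning on the magnitudes. Exploiting symmetry, set $\varepsilon_k := \mathrm{sgn}(X_k)$ (resolving the sign at $0$ by an independent fair coin); then $\{\varepsilon_k\}$ is an i.i.d.\ Rademacher sequence independent of the $\sigma$-algebra $\mathcal F := \sigma(|X_j|: j \geqslant 0)$, and $X_k = \varepsilon_k |X_k|$ almost surely. Conditionally on $\mathcal F$, the series $F(z) = \sum_k \varepsilon_k a_k z^k$ (with $a_k := |X_k|$) is a Rademacher series with deterministic coefficients, whose radius of convergence remains $1/\limsup a_k^{1/k} = 1$ (signs do not affect this), and with $\sum_k a_k^2 = \infty$ for $\mathbb P$-a.e.\ realization. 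Proposition \ref{prop:Rad-snb} then yields SNB conditionally on $\mathcal F$ with probability $1$; integrating over $\mathcal F$ lifts this to SNB unconditionally almost surely.

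\textbf{The crux.} The substance lies in Proposition \ref{prop:Rad-snb}, which I would derive from Theorem \ref{thm:s-nat-bd} as follows. For $\varepsilon_k$ Rademacher and $a_k > 0$ fixed, a direct computation gives $Q(\varepsilon_k a_k, t) = 1/2$ for every $0 \leqslant t < 2 a_k$. Choosing the bounded sequence $t_k := \min(a_k, 1)$ ensures $t_k < 2 a_k$ on $\{a_k > 0\}$, so
\[
\sum_{k=0}^\infty t_k^2 \bigl(1 - Q(\varepsilon_k a_k, t_k)\bigr) = \tfrac{1}{2} \sum_{k=0}^\infty \min(a_k, 1)^2,
\]
which diverges if and only if $\sum_k a_k^2 = \infty$, as a short case analysis shows (if the latter sum is finite then $a_k \to 0$ and the two sums agree eventually; if infinite, either $a_k \to 0$ again, or a subsequence stays bounded away from $0$). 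Theorem \ref{thm:s-nat-bd} then applies and delivers SNB for the Rademacher series. The main obstacle in the overall plan is therefore establishing Proposition \ref{prop:Rad-snb} in enough generality to cover the minimal constraints $\sum_k a_k^2 = \infty$ and $\limsup a_k^{1/k} = 1$; once that Rademacher input is in hand, the decoupling reduction above is routine.
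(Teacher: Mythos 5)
Your proof is correct and follows the same overall strategy as the paper (reduce to the Rademacher case via the reduction principle, then invoke the Rademacher proposition, which you correctly re-derive from Theorem \ref{thm:s-nat-bd}). Two minor variations are worth noting. For the forward direction, you prove the weaker fact that $\sum_k |X_k|^2 < \infty$ implies $F \in H^2$ via Parseval and conclude via Cauchy--Schwarz; the paper instead proves the stronger statement $F \in H^p$ for all $0<p<\infty$ (using Khintchine's inequality in Proposition \ref{prop:Rad-snb} followed by the same decoupling as the reverse direction). Your route is more elementary and wholly deterministic, which is cleaner for the iff you are asked to prove, at the cost of a less sharp conclusion. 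For the reverse direction, you decouple by extracting $\varepsilon_k = \mathrm{sgn}(X_k)$ and using independence of sign and magnitude for symmetric variables, whereas the paper introduces a fresh independent Rademacher sequence $\{\varepsilon_k\}$ on an auxiliary factor and uses equidistribution of $\{X_k\}$ with $\{\varepsilon_k X_k\}$ together with Fubini--Tonelli. Both are standard implementations of the reduction principle; yours is more ``intrinsic'' and slightly shorter but needs the (true, standard) fact that sign and magnitude of a symmetric variable are independent, while the paper's version avoids any analysis of the sign structure and also handles the atom at $0$ automatically without introducing auxiliary coins. In either formulation, the heavy lifting is Proposition \ref{prop:Rad-snb}, and your derivation of it from Theorem \ref{thm:s-nat-bd} with $t_k = \min(a_k,1)$ and $Q(\varepsilon_k a_k, t_k) = 1/2$ matches the paper's exactly.
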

This is included in Theorem \ref{thm:snb-sym} below. 

Section \ref {S:bd-case} is where we turn to the case of uniformly bounded $X_k$'s and re-examine the result of Breuer-Simon. 
Here we find that, as variances are now available, the anti-concentration condition \eqref{anti_in_Intro} 
can be relaxed to the condition $\sum_{k=0}^\infty {\rm Var}[X_k]  = \infty$:

\begin{theorem} \label{thm:snb-be}
	Let $\{X_k\}_{k=0}^\infty$ be a sequence of independent random variables with $|X_k| \leq M$ a.s. for all $k$ and $\sum_{k=0}^\infty {\rm Var}[X_k] = \infty$. 
	Then, the random power series $F(z) = \sum_{k=0}^\infty X_k z^k$ has a.s. radius of convergence $1$, and the unit circle as strong natural boundary. 
\end{theorem}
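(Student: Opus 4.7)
The plan is to reduce to the symmetric case already established in Theorem~\ref{thm:snb-sym} by symmetrizing the coefficients, and then to lift SNB from the symmetrized series back to $F$ via a tail-event argument.

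First I would pin down the radius of convergence. Cauchy--Hadamard together with $|X_k|\leq M$ gives $r_F\geq 1$ a.s. Since $r_F$ is tail-measurable, either $r_F=1$ a.s.\ or $r_F>1$ a.s.; in the latter case $\sum_k X_k$ would converge almost surely, and Kolmogorov's three-series theorem (with any truncation level $>M$, which trivializes the truncation) would then force $\sum_k\mathrm{Var}[X_k]<\infty$, contradicting the hypothesis. Hence $r_F=1$ a.s.

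Next, let $\{X_k'\}$ be an independent copy of $\{X_k\}$ and set $\tilde X_k:=X_k-X_k'$. The sequence $\{\tilde X_k\}$ is independent and symmetric with $|\tilde X_k|\leq 2M$ and $\mathbb{E}\tilde X_k^2=2\,\mathrm{Var}[X_k]$. The three-series argument above, applied to $\tilde F(z):=\sum_k\tilde X_k z^k$, yields $r_{\tilde F}=1$ a.s., and since $(\tilde X_k^2)$ are independent, non-negative, uniformly bounded by $4M^2$, with $\sum_k \mathbb{E}\tilde X_k^2=\infty$, a standard estimate (for instance $\prod_k\mathbb{E} e^{-s\tilde X_k^2}\to 0$ for any fixed $s>0$) shows $\sum_k\tilde X_k^2=\infty$ almost surely. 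Theorem~\ref{thm:snb-sym} then implies that the unit circle is SNB for $\tilde F$ a.s.

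Finally, to lift SNB from $\tilde F$ back to $F$, I would fix a countable base of arcs $\{I_n\}$ (say with rational endpoints) and set $B_n:=\{\sup_{0<r<1}\int_{I_n}|F(re^{i\theta})|\,d\theta<\infty\}$. Each $B_n$ is a tail event, because modifying finitely many $X_k$'s changes $F$ by a polynomial that is uniformly bounded on the closed unit disk and therefore perturbs $\int_{I_n}|F(re^{i\theta})|\,d\theta$ only by an $O(1)$ amount. By Kolmogorov's $0$--$1$ law, $\mathbb{P}(B_n)\in\{0,1\}$. If $F$ failed to have SNB with positive probability, then $\mathbb{P}(B_n)=1$ for some $n$; by equidistribution the corresponding event $B_n'$ for $F'$ also has probability $1$, and on $B_n\cap B_n'$ the triangle inequality $\int_{I_n}|\tilde F|\leq\int_{I_n}|F|+\int_{I_n}|F'|$ gives $\sup_r\int_{I_n}|\tilde F|<\infty$, contradicting SNB of $\tilde F$. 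The subtlest step is this last transfer: one has to localize the failure of SNB to a single arc from a countable family so that the $0$--$1$ law applies arc by arc, and verify the tail property of $B_n$ with care; everything else is routine.
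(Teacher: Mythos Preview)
Your argument is correct, but it follows a genuinely different route from the paper's own proof. The paper does not symmetrize; instead it establishes a more general criterion (Proposition~\ref{prop:3rd-var}) requiring only $\sup_n \mathbb{E}|X_n-\mathbb{E}X_n|^3/{\rm Var}[X_n]<\infty$ together with $\sum_n{\rm Var}[X_n]=\infty$, and proves it by replacing Rogozin's inequality in the proof of Theorem~\ref{thm:s-nat-bd} with the Berry--Esseen bound (Lemma~\ref{lem:BE}). The bounded case then follows since $\mathbb{E}|X_n-\mathbb{E}X_n|^3\leq 2M\,{\rm Var}[X_n]$.

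Your approach is more elementary in that it avoids Berry--Esseen entirely and recycles Theorem~\ref{thm:snb-sym} together with the zero--one machinery already set up in Fact~\ref{fct:2-1} and Proposition~\ref{prop:snb-0-1}. The price is twofold: first, your lifting step (from $\tilde F$ back to $F$) relies on the sub-additivity $|\tilde F|\leq |F|+|F'|$, so it yields $\psi$-(SNB) only for test functions satisfying \eqref{eq:sub-ad}, whereas the paper's direct argument gives $\psi$-(SNB) for \emph{all} test functions $\psi$; second, the paper's route isolates the stronger third-moment criterion of Proposition~\ref{prop:3rd-var}, which goes beyond uniform boundedness. For the $L^1$-(SNB) stated here, though, your symmetrization-and-transfer argument is a perfectly clean alternative.
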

This is contained in Theorem \ref {thm:snb-be}. The proof builds on the proof of Theorem \ref{thm:s-nat-bd} 
with the Berry-Esseen estimate (Lemma \ref{lem:BE} below) replacing Rogozin.  

These results also hold when absolute value in \eqref{BSforIntro} is replaced by any non-decreasing $\psi:[0,\infty) \to [0,\infty)$ with $\psi(t) \to \infty$ as $t \to \infty$. 
To capture the logarithmic function, in Section \ref{S:log-int} we show that the condition
\begin{equation}
     \sup_k Q(X_k, \lambda) \leq b\lambda,
\end{equation}
for all $\lambda >0$ and some $b>0$ suffices, when the power series has radius of convergence $1$, 
to satisfy the logarithmic version of \eqref{BSforIntro} on any arc $I$:
\begin{equation}
     	\sup_{0<r<1} \int_I \log|F(re^{i\theta})| d\theta = \infty.
\end{equation}
This section relies heavily on results from our previous \cite{DV}.

Some implications of a strong natural boundary for the pointwise behavior of the power series on the circle of convergence are included in Section \ref{pointwise}. 
Section 2 gathers some facts and preliminary results that we use from Complex Analysis and Probability Theory.

\section{Preliminaries}

\subsection{Notation} Let $\mathbb D$ denote the open unit disc $\{z\in \mathbb C : |z| <1\}$ and we write $D(z_0,r)$ for the generic open disc
centered at $z_0\in \mathbb C$ with radius $r>0$, i.e., $D(z_0, r) : = \{z\in \mathbb C : |z-z_0| < r \}$. 
We write $\mathbb T=\{ z\in \mathbb C : |z|=1\}$ for the unit circle and $C(z_0,r)$ for the circle centered at $z_0$ having radius $r>0$. For a subset $A$ of $\mathbb C$ 
we write $\overline{A}$ for its closure with respect to the standard topology induced by the modulus on $\mathbb C$. 

The triple $(\Omega, {\mathcal E}, P)$ will stand for a probability space. The random objects will be denoted with uppercase letters, e.g, random variables $X,Y,Z, \ldots$ 
and, consequently, the random functions generated by them, are denoted by $F,G,H, \ldots$. We reserve the lowercase letters for non-random objects (deterministic); 
however, we occasionally use lowercase letters to denote classical random variables (e.g., Bernoulli, Rademacher, etc) and in generic probabilistic facts, as follows.
The mathematical expectation of a random variable $\xi$ is denoted by $\mathbb E[\xi]$ and its variance by ${\rm Var}[\xi] = \mathbb E |\xi- \mathbb E\xi|^2$.

Throughout the text we will make frequent use of universal (numerical) constants, which we shall denoted with $C, c, c_0, \ldots$, and
their value may change from line to line. For two (positive) quantities $Q_1, Q_2$ we write $Q_1 \lesssim Q_2$ if there exists a universal constant $C>0$ so 
that $Q_1 \leq CQ_2$. We write $Q_1 \asymp Q_2$ if $Q_1 \lesssim Q_2$ and $Q_2 \lesssim Q_1$.

\subsection{Complex Analysis fundamentals} 

Recall that a power series $f(z) = \sum_{k=0}^\infty c_k z^k$ with radius of convergence $r_f=1$ admits $z_0 \in \mathbb T$ as a {\it regular}
point, if there exists a neighborhood $D(z_0,\delta)= \{z : |z-z_0|<\delta\}$ such that $f$ has an analytic extension in $\mathbb D \cup D(z_0,\delta)$.
Otherwise the point is called {\it singular.}

For a power series $f$ as above we say that the circle $\mathbb T$ is a {\it natural boundary} for $f$ 
if $f$ cannot be extended to a holomorphic function through any arc of this circle.

A more pathological behavior on the boundary of the radius of convergence of a power series can be described with the notion of {\it strong natural boundary} (SNB). 
This strong singularity, following Breuer and Simon \cite{BS}, is defined as follows: An analytic function $f(z)= \sum_{k=0}^\infty c_k z^k$ on
the disk is said to have {\it strong natural boundary} at $|z|=1$ if for any arc $I=(a,b) \subset (0,2\pi)$ we have
		\begin{align}
			\sup_{0<r<1} \int_I |f(re^{i\theta}) | \, d\theta = \infty. 
		\end{align}
We occasionally refer to this as an {\it $L_1$-strong natural boundary}. With the work of Agmon \cite{Agmon} in mind, 
we shall say that the power series $f$ with radius of convergence $r_f=1$ has the unit circle as an {\it $L^\infty$-strong natural boundary} 
if for any arc $I$
\begin{equation}
     \sup_{0< r < 1, \theta \in I} |f(re^{i\theta})| 
     = \infty	.
\end{equation}

These notions arise naturally in the course of precluding analytic functions to belong to the classical Hardy spaces
$H^1$ and $H^\infty$, even locally. Recall that an analytic function $f:\mathbb D \to \mathbb C$ is in $H^p\equiv H^p (\mathbb D)$, $(0<p<\infty)$ if
	\begin{align}
		\|f\|_{H^p}^p:= \sup_{0<r<1} \left\{ \frac{1}{2\pi }\int_0^{2\pi} |f(re^{i\theta})|^p \, d\theta\right\} <\infty.
	\end{align} 
The $H^\infty$ is defined with respect to the condition $\sup_r \sup_{\theta} |f(re^{i\theta})|<\infty$. 
Replacing $t^p$ by $\log^+ t$ we obtain the Nevanlinna space $N$, i.e., the collection of all analytic functions $f$ on the unit disk $\mathbb D$ for which
	\begin{align}
		\sup_{0<r<1} \left\{ \frac{1}{2\pi} \int_0^{2\pi} \log^+ |f(re^{i\theta})| \, d\theta \right\} <\infty,
	\end{align}
where $\log^+ t =0$ if $0\leq t\leq 1$ and $\log^+ t =\log t$ if $t>1$. It is known that  $H^\infty \subset H^q \subset H^p \subset N$,
for all $0<p<q<\infty$, and that the $\sup_r$ can be replaced by the $\lim_{r\uparrow 1}$ due to the (sub-)harmonicity of the functions
$|f|^p$ and $\log^+ |f|$ (for a proof see e.g. \cite{Du-book}).

Expanding the perspective of \cite{BS}, and taking these definitions into account, one may define several variants of (SNB): For instance, 
we say that $f$ has $|z|=1$ as {\it Nevanlinna}-(SNB) if for any arc $I \subset (0,2\pi)$ satisfies
	\begin{align}
		\sup_{0<r<1} \int_I \log^+ |f(re^{i\theta})| \, d\theta = \infty. 
	\end{align}
In view of the above we conclude the following hierarchy for strong natural boundaries: for $0<p<\infty$
	\begin{align*}
		Nevanlinna-(SNB)\quad \Longrightarrow \quad L_p - (SNB) \quad \Longrightarrow \quad L_\infty - (SNB) .
	\end{align*}	
More generally, one can define $H^\psi$ spaces for {\it test functions} $\psi$. In our context a test function stands for a non-decreasing function 
$\psi: [0,\infty) \to [0,\infty)$ with $\lim_{t\to \infty} \psi(t) = \infty$. 
Thus, a function $f$ belongs to $H^\psi$ if 
	\begin{align}
		\sup_{0<r<1} \int_0^{2\pi} \psi(|f(re^{i\theta})|) \, d\theta< \infty,
	\end{align} and, by analogy, $f$ exhibits
$\psi$-(SNB) if for every $I\subset (0,2 \pi)$ we have 
	\begin{align}
		\sup_{0<r<1} \int_I \psi(|f(re^{i\theta})|) \, d\theta = \infty.
	\end{align} Occasionally, the test functions under consideration will satisfy a sub-additivity 
property as 
	\begin{align} \label{eq:sub-ad}
		\psi(t+s) \leq C [1 +\psi(t) + \psi(s)], \quad t,s\geq 0, \quad (C>0).
	\end{align}

\begin{remark}
	Let us note in passing that the functions $\psi(t)=t^p$ and $\psi(s) = \log^+s$ that arise in classical theory of Hardy spaces, apart from being in the class 
	of test functions defined above, also satisfy a sub-additivity condition. Indeed, for any $z_1, \ldots, z_m\in \mathbb C$ we have
		\begin{align}
			\left| \sum_{i=1}^m z_i \right|^p \leq m^p \sum_{i=1}^m |z_i|^p, \quad 0<p<\infty,
		\end{align}
	and for any $w_1, \ldots, w_m \in \mathbb C$ we have\footnote{W.l.o.g. we may assume that $|w_1| \geq \ldots \geq |w_m| \geq 0$. Taking 
	into account the monotonicity of $t \mapsto \log^+ t$ we may write
		\begin{align}
			\log^+ \left| \sum_i w_i \right| \leq \log^+ \left( \sum_i  |w_i| \right) \leq \log^+ (m|w_1|) \leq \log m + \log^+|w_1| \leq \log m + \sum_{i=1}^m \log^+ |w_i|.
		\end{align} }
		\begin{align} \label{fct:tria-log+}
			\log^+ \left| \sum_{i=1}^m w_i \right| \leq \log m + \sum_{i=1}^m \log^+ |w_i|.
		\end{align}	
\end{remark}

\subsection{Probabilistic toolbox} \label{S:2-2}

We consider random power series of the form $F(z) = \sum_{k=0}^\infty X_kz^k$ with independent coefficients $\{X_k\}_{k=0}^\infty$.
Recall that Kolmogorov's 0-1 Law implies that the radius of convergence $r_F= (\limsup_k |X_k|^{1/k})^{-1}$ is a.s. a constant in $[0,\infty]$.
For meaningfully addressing the question of natural boundary, one must assume that $0<r_F<\infty$ (almost surely) and our statements, to ease
the notation, will refer to the normalized case of $r_F=1$ (since otherwise we may work with $Y_k:= r_F^k X_k$). We also work with real random 
coefficients since, as it becomes apparent from the arguments, the complex case can be derived be considering the real and imaginary parts.

For establishing the strong singularity of the random power series we study the anti-concentration phenomenon for the sequence of their random partial sums. 
This, in turn, is quantified in terms of the L\'evy concentration function. Recall the following: 

\begin{definition} [L\'evy concentration function]
	Let $\xi$ be a random variable and let $\lambda \geq 0$. The L\'evy concentration function of $\xi$ at level $\lambda$ is defined by
		\begin{align}
			Q(\xi, \lambda) := \sup_{v\in \mathbb R} \mathbb P(v \leq \xi \leq v + \lambda).
		\end{align}
\end{definition}

This measure of dispersion was introduced in the works of Doeblin \& L\'evy \cite{DL} and further developed by Kolmogorov \cite{Kolmo}, Rogozin \cite{Rog-1, Rog-2}, 
Esseen \cite{Ess}, among others, for the study of the spread of sums of independent random variables. In our approach we shall use the following quantitative
form due to Rogozin.

\begin{lemma} [Rogozin, 1961] \label{lem:Rogo}
	Let $\xi_1, \ldots, \xi_N$ be independent random variables and let $\lambda_1, \ldots, \lambda_N>0$ such that $Q(\xi_k, \lambda_k)<1$ for $k\leq N$. 
	Then, for any $L>0$ if we set $\beta_k = \min\{L, \lambda_k/2\}$ we obtain\footnote{The original (combinatorial) proof in \cite{Rog-1}, and the alternative (Fourier analytic) approach offered by Esseen in \cite{Ess}, performed under the restriction $2L\geq \max_k \lambda_k$. However, a careful optimization argument in \cite{Ess} yields the above unrestricted form introducing the $\beta_k$'s.}
		\begin{align}
			Q(S_N, L) \leq CL \left( \sum_{k=1}^N \beta_k^2 (1-Q(\xi_k, \lambda_k)) \right)^{-1/2},
		\end{align}
	where $C>0$ is a universal constant and $S_N = \xi_1+\ldots + \xi_N$.
\end{lemma}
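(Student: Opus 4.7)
The plan is to prove Rogozin's inequality via the Fourier-analytic route due to Esseen. The proof rests on two ingredients: the duality between the concentration function and the characteristic function, and a quantitative lower bound on $1-|\phi_\xi|^2$ in terms of $1-Q(\xi,\lambda)$.

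First, I would establish Esseen's concentration inequality
\begin{align*}
Q(\xi, L) \lesssim L \int_{|t| \leq 1/L} |\phi_\xi(t)|\,dt,
\end{align*}
where $\phi_\xi(t) = \mathbb E e^{it\xi}$. This follows from a standard smoothing argument: convolve the law of $\xi$ with an appropriate bandlimited kernel (for instance, the squared Fej\'er kernel) whose Fourier transform is supported in an interval of length $\asymp 1/L$ and comparable to $1/L$ near the origin, and apply Fourier inversion together with the triangle inequality.

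Second, I would derive a pointwise bound on each $|\phi_{\xi_k}(t)|$ via symmetrization. Let $\xi_k'$ be an independent copy of $\xi_k$ and set $\eta_k = \xi_k - \xi_k'$, so that $|\phi_{\xi_k}(t)|^2 = \mathbb E \cos(t\eta_k) = 1 - 2\mathbb E \sin^2(t\eta_k/2)$. The elementary bound $\sin^2 x \gtrsim \min\{x^2, 1\}$ together with $P(|\eta_k| \leq \lambda_k/2) \leq Q(\xi_k, \lambda_k)$ (obtained by conditioning on $\xi_k$) give
\begin{align*}
1 - |\phi_{\xi_k}(t)|^2 \gtrsim \min\{(t\lambda_k)^2, 1\}\,(1 - Q(\xi_k, \lambda_k)).
\end{align*}
A short case analysis depending on whether $\beta_k = L$ or $\beta_k = \lambda_k/2$ shows that for $|t| \leq 1/L$ one has $\min\{(t\lambda_k)^2, 1\} \gtrsim (t\beta_k)^2$. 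Applying $1 - x \leq e^{-x}$ converts this into
\begin{align*}
|\phi_{\xi_k}(t)|^2 \leq \exp\!\left(-c\,t^2 \beta_k^2 (1 - Q(\xi_k, \lambda_k))\right).
\end{align*}

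Third, I would multiply across $k$ using independence of the $\xi_k$'s, obtaining $|\phi_{S_N}(t)| \leq \exp(-c' t^2 A/2)$ with $A := \sum_k \beta_k^2 (1 - Q(\xi_k, \lambda_k))$. Plugging into Esseen's bound and extending the domain of integration to all of $\mathbb R$ yields
\begin{align*}
Q(S_N, L) \lesssim L \int_{\mathbb R} e^{-c'' t^2 A}\,dt \lesssim L\,A^{-1/2},
\end{align*}
which is precisely the claimed inequality.

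The main obstacle is the bookkeeping around the truncation $\beta_k = \min\{L, \lambda_k/2\}$: this specific choice is exactly what aligns the quadratic regime of $x \mapsto \min\{x^2, 1\}$ (evaluated at $x = t\lambda_k$) with the integration range $|t| \leq 1/L$ uniformly in $k$, and yields a single universal constant in the final estimate regardless of how small or large the $\lambda_k$ are compared to $L$. All remaining steps are routine manipulations with Gaussian integrals and standard Fourier identities.
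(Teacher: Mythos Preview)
The paper does not prove this lemma; it is quoted as a classical result with references to Rogozin and Esseen, the footnote noting that the unrestricted form (with $\beta_k=\min\{L,\lambda_k/2\}$) comes from an optimization in Esseen's argument. So there is no in-paper proof to compare against. Your outline follows Esseen's Fourier-analytic route and most of the ingredients are correct, but the second step has a real gap.

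The inequality ``$\sin^2 x \gtrsim \min\{x^2,1\}$'' is false (take $x=\pi$), and the pointwise bound you deduce from it,
\[
1 - |\phi_{\xi_k}(t)|^2 \;\gtrsim\; \min\{(t\lambda_k)^2,1\}\,\bigl(1 - Q(\xi_k,\lambda_k)\bigr),
\]
fails as well. Concretely, let $\xi_k=\pm a$ with probability $1/2$ each. Then $\eta_k=\xi_k-\xi_k'\in\{0,\pm 2a\}$ and $|\phi_{\xi_k}(t)|^2=\tfrac12(1+\cos 2at)$, which equals $1$ at $t=\pi/a$. With $L=1$, $\lambda_k=1$, and $a>\pi$, the point $t=\pi/a$ lies in $[-1/L,1/L]$, the right-hand side above is positive (since $Q(\xi_k,1)=1/2$), yet the left-hand side vanishes. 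Thus the exponentiated majorant $|\phi_{\xi_k}(t)|\le\exp\bigl(-c\,t^2\beta_k^2(1-q_k)\bigr)$ is not available pointwise on $|t|\le 1/L$, and the Gaussian integral in your third step is unjustified.

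What \emph{is} true is the averaged version: with $T=1/L$,
\[
\frac{1}{2T}\int_{-T}^{T}\bigl(1 - |\phi_{\xi_k}(t)|^2\bigr)\,dt
= \mathbb E\!\left[1 - \frac{\sin(T\eta_k)}{T\eta_k}\right]
\;\gtrsim\; \min\{(T\lambda_k)^2,1\}\,(1-Q(\xi_k,\lambda_k)),
\]
since $1-\tfrac{\sin x}{x}\gtrsim\min\{x^2,1\}$ does hold for all real $x$. Passing from this averaged control on each factor to a bound on $\int_{-T}^{T}\prod_k|\phi_{\xi_k}(t)|\,dt$ is the nontrivial part of Esseen's proof and requires an additional argument (a second application of the smoothing inequality to the symmetrized sum, together with an elementary estimate on such integrals); the pointwise Gaussian-domination shortcut does not survive. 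The other pieces of your sketch---the smoothing bound $Q(\xi,L)\lesssim L\int_{|t|\le 1/L}|\phi_\xi|$, the symmetrization $|\phi_\xi|^2=\phi_{\xi-\xi'}$, and the case split generating the $\beta_k$'s---are correct.
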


This result will be essential to derive anti-concentration results for averages of polynomials along arcs.
The next lemma says that anti-concentration estimates are inherited to mixtures. 
This result can be viewed as the counterpart of \cite[Lemma 4.7]{DV} in the context of lower deviations.

\begin{lemma} \label{lem:mix-sb}
	Let $(X, {\mathcal A}, \mu)$ and $(Y, {\mathcal B}, \nu)$ be two probability spaces and 
	let $h: X\times Y \to [0,\infty)$ be a ${\mathcal A} \otimes {\mathcal B}$-measurable function. Then, for all $t >0$ we have
		\begin{align}
			\mu\left( x\in X : \int_Y h(x,y) \, d\nu(y) \leq t \right) \leq 2 \int_Y \mu (x : h(x,y) \leq 2t ) \, d\nu(y).
		\end{align}
\end{lemma}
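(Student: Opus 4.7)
The plan is to run a two-step argument combining Markov's inequality (applied in the $y$-variable for each fixed $x$) with Fubini's theorem. The key observation is that if $\int_Y h(x,y)\,d\nu(y)$ is small, then $h(x,\cdot)$ must itself be small on a set of $\nu$-measure at least $1/2$; averaging this statement over $x$ and swapping integration order yields the claimed bound.

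More precisely, I would set $g(x) := \int_Y h(x,y)\,d\nu(y)$ and $A := \{x \in X : g(x) \leq t\}$; the goal is then to show $\mu(A) \leq 2 \int_Y \mu(\{x : h(x,y) \leq 2t\})\,d\nu(y)$. For any fixed $x \in A$, Markov's inequality applied to the nonnegative measurable function $y \mapsto h(x,y)$ on $(Y,\mathcal{B},\nu)$ gives
\begin{equation*}
\nu\bigl(\{ y \in Y : h(x,y) > 2t \}\bigr) \leq \frac{g(x)}{2t} \leq \frac{1}{2},
\end{equation*}
hence $\nu(\{y : h(x,y) \leq 2t\}) \geq 1/2$ whenever $x \in A$.

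To conclude, I would use Tonelli's theorem on the nonnegative function $\mathbf{1}_{\{h(x,y)\leq 2t\}}$:
\begin{align*}
\int_Y \mu\bigl(\{x : h(x,y) \leq 2t\}\bigr)\,d\nu(y)
&= \int_X \nu\bigl(\{y : h(x,y) \leq 2t\}\bigr)\,d\mu(x) \\
&\geq \int_A \nu\bigl(\{y : h(x,y) \leq 2t\}\bigr)\,d\mu(x) \geq \frac{1}{2}\,\mu(A),
\end{align*}
which rearranges to the desired inequality.

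There is no real obstacle here: the only thing to be a bit careful about is the measurability of the sections and of the set $\{h \leq 2t\}$, but both follow from $\mathcal{A}\otimes \mathcal{B}$-measurability of $h$ together with standard Tonelli-type slicing, so they can be acknowledged in a single sentence. The constant $2$ in the statement is tight for this Markov-based approach and emerges naturally from the threshold choice $2t$ in the interior event.
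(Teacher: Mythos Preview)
Your proof is correct and follows essentially the same route as the paper: both arguments use Markov's inequality in the $y$-variable to show that $\nu(\{y:h(x,y)\leq 2t\})\geq 1/2$ for each $x\in A$, and then invoke Tonelli--Fubini to swap the order of integration. The only cosmetic difference is that the paper phrases the pointwise step as a contradiction argument for the inequality $\mathbf{1}_A(x)\leq 2\nu(B_x)$, whereas you argue directly.
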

%


\noindent {\it Proof.} Without loss of generality we may assume that $t =1$. Let $B=\{h \leq 2\}$ and 
notice that Tonelli-Fubini's theorem allows for the right-hand side to be rewritten as $\int_X \nu (B_x) \, d\mu(x)$. Hence, it suffices to show that 
	\begin{align}
		\mathbf 1_A(x) \leq 2 \nu(B_x), 
	\end{align}
for all $x\in X$, where $A:= \left\{ x\in X: \int_Y h(x,y) \, d\nu(y) \leq 1\right\}$. We argue by contradiction. If this is not the case, then there exists 
$x\in X$ such that 
	\begin{align}
		\int_Y h(x, y) \, d\nu(y) \leq 1 \quad \textrm{and} \quad \nu(B_x) < 1/2.
	\end{align}
On the other hand, by Markov's inequality, we get
	\begin{align}
		\nu(Y \setminus B_x) = \nu (y\in Y : h(x,y) > 2) \leq \frac{1}{2} \int_Y h(x,y) \, d\nu(y) \leq \frac{1}{2},
	\end{align}
a contradiction. \prend

We will also need the Berry-Esseen estimate for the rate of convergence in the Central Limit Theorem which we include here as an auxiliary result.

\begin{lemma} [Berry 1941, Esseen 1942] \label{lem:BE}
	Let $Y=(Y_1, \ldots, Y_N)$ be a random vector on $\mathbb R^N$ with independent coordinates such that $\mathbb EY_k=0$, $\mathbb EY_k^2=1$, and 
	$\mathbb E|Y_k|^3<\infty$ for all $k\leq N$. Then, for any $\theta \in S^{N-1}=\{ v \in \mathbb R^N : \| v \|_2=1\}$ we have
		\begin{align}
			\sup_{u\in \mathbb R} |P(Z\leq u) - P(g \leq u) |  \leq C  \sum_{k=1}^N |\theta_k|^3 \mathbb E|Y_k|^3,
		\end{align}
	where $Z: =\langle Y, \theta \rangle$ and $g\sim N(0,1)$.
\end{lemma}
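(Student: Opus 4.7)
The plan is to recognize the statement as a direct normalization of the classical (non-identically distributed) Berry--Esseen theorem applied to the single one-dimensional random variable $Z = \langle Y,\theta\rangle = \sum_{k=1}^N \theta_k Y_k$. The point is that $Z$ already has unit variance for any $\theta\in S^{N-1}$, so the usual Lyapunov-ratio bound collapses exactly to the right-hand side of the claim.

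First I would set $\xi_k := \theta_k Y_k$ and observe that $\{\xi_k\}_{k=1}^N$ is an independent family with $\mathbb E \xi_k = 0$, $\mathbb E \xi_k^2 = \theta_k^2 \, \mathbb E Y_k^2 = \theta_k^2$, and $\mathbb E |\xi_k|^3 = |\theta_k|^3 \, \mathbb E |Y_k|^3$. Because $\theta\in S^{N-1}$, the total variance of $Z = \sum_k \xi_k$ is $\sigma^2 = \sum_k \theta_k^2 = 1$, so no extra standardization is needed. The classical Berry--Esseen bound for sums of independent centered summands then yields
\[
\sup_{u\in \mathbb R}\bigl| P(Z\leq u) - P(g\leq u) \bigr| \;\leq\; \frac{C}{\sigma^3} \sum_{k=1}^N \mathbb E|\xi_k|^3 \;=\; C \sum_{k=1}^N |\theta_k|^3 \, \mathbb E|Y_k|^3,
\]
which is exactly the stated inequality.

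If a self-contained argument is preferred, the route I would take is Esseen's smoothing inequality combined with a characteristic-function comparison: writing $\varphi_Z(t) = \prod_k \varphi_{\xi_k}(t)$ and using the Taylor estimate $|\varphi_{\xi_k}(t) - (1 - \tfrac12 \theta_k^2 t^2)| \leq C |\theta_k t|^3 \, \mathbb E|Y_k|^3$ together with $|\log(1+z) - z| \leq |z|^2$ on a small disk, one compares $\varphi_Z(t)$ to $e^{-t^2/2}$ on $|t| \leq c/\max_k|\theta_k|$, where the unit-variance cancellation $\sum_k \theta_k^2 = 1$ kills the quadratic terms. Integrating the resulting bound against the Fejér kernel via Esseen's smoothing lemma produces the Kolmogorov-distance estimate. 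I do not expect genuine obstacles here: the statement is a direct specialization of a classical theorem, so the work is bookkeeping (and the correct exponent match $\sigma^{-3}\cdot\sigma^2 \cdot 1$) rather than the construction of a new idea.
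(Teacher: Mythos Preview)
Your proposal is correct: the lemma is precisely the classical non-i.i.d.\ Berry--Esseen theorem applied to the summands $\xi_k=\theta_k Y_k$, and the unit-variance normalization $\sum_k\theta_k^2=1$ makes the Lyapunov ratio collapse to $\sum_k|\theta_k|^3\mathbb E|Y_k|^3$ exactly as you write. The paper does not supply a proof of this lemma at all---it is stated as a standard auxiliary result---so your reduction to the classical theorem is the intended reading.
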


Now we turn to describe the probabilistic structure of the notion of $\psi$-(SNB) for random analytic functions. 
In what follows the reader interested in applications may think of the test function $\psi$ as $\psi(s) = s^p$ or $\psi(s) = \log^+ s$.

For clarity let us mention that for any fixed arc $I \subset (0, 2 \pi)$, $0<r<1$, and test function $\psi$, the mapping 
	\begin{align}
		\omega \mapsto Y_{I,r}(\omega): = \fint_I \psi(|F(re^{i\theta})|) \, d\theta \equiv \frac{1}{|I|}\int_I \psi(|F(re^{i\theta})|) \, d\theta,
	\end{align}
is a random variable viewed as a mixture of the r.v.s $\{\psi(|F(re^{i\theta})|)\}_{\theta\in I}$, 
where $I$ is considered as probability space equipped with the normalized Lebesgue measure. Next, the supremum of the latter variables over 
a dense set of radii, say
	\begin{align}
		Z_I:=\sup_{0 <q<1 \atop q\in \mathbb Q} Y_{I,q},
	\end{align}
defines a random process indexed by the sub-intervals of $(0,2\pi)$. Of course, we get
	\begin{align}
		Z_I \leq \sup_{0<r<1} \fint_I \psi(|F(re^{i\theta})|) \, d\theta,
	\end{align}
which shows that $Z_I$ can be used instead, in probabilistic considerations in proving (SNB). If, additionally, $\psi$ is continuous then we obtain equality
in the above comparison.

Having thus clarified measurability issues the next result informs us that the notion of strong natural boundary, for random power series, is a tail event.

\begin{fact} \label{fct:2-1}
	Let $\{X_k\}_{k=0}^\infty$ be a sequence of r.v.s and assume that the random power series 
	$F(\omega ; z) = \sum_{k=0}^\infty X_k(\omega) z^k$ has a.s. $r_F =1$. Let $\psi$ be a test function which has the 
	sub-additivity property \eqref{eq:sub-ad}.
	For any arc $I\subset (0,2\pi)$ we consider the r.v. $Z_I(\omega) := \sup_{0<q<1, q\in \mathbb Q} \fint_I \psi(|F(\omega ; qe^{i\theta})|) \, d\theta$. Then, the event
		\begin{align}
			G_I := \{ \omega : Z_I(\omega) < \infty\}
		\end{align}
	is in the tail $\sigma$-field ${\mathcal T} = \bigcap_{n=1}^\infty \sigma(\{X_k : k\geq n\})$. 
\end{fact}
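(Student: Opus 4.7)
The plan is to show that removing any finite number of initial terms of the series does not change whether $Z_I$ is finite, so that $G_I$ is measurable with respect to $\sigma(X_k : k\geq n)$ for every $n$. Concretely, for each $n\geq 1$ I decompose
\[
F(\omega;z) = P_n(\omega;z) + F_n(\omega;z), \qquad P_n(\omega;z) := \sum_{k=0}^{n-1} X_k(\omega) z^k, \quad F_n(\omega;z) := \sum_{k\geq n} X_k(\omega) z^k,
\]
so that $F_n$ is measurable with respect to $\sigma(X_k : k\geq n)$ and shares the radius of convergence of $F$ on the event $\{r_F=1\}$. Since $P_n(\omega;\cdot)$ is a polynomial and the closed unit disk is compact, $M_n(\omega) := \sup_{|z|\leq 1}|P_n(\omega;z)|$ is finite for every $\omega$.

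From the triangle inequality $|F|\leq |P_n|+|F_n|$, the monotonicity of $\psi$, and sub-additivity \eqref{eq:sub-ad}, I obtain at every point of the open disk
\[
\psi(|F(re^{i\theta})|) \leq C\bigl[1 + \psi(M_n(\omega)) + \psi(|F_n(\omega;re^{i\theta})|)\bigr],
\]
and, using $F_n = F - P_n$, the analogous bound with the roles of $F$ and $F_n$ interchanged. Averaging over $\theta\in I$ and taking the supremum over rational $r\in (0,1)$ yields, with $Z_I^{(n)}$ the analogue of $Z_I$ for $F_n$ in place of $F$,
\[
Z_I(\omega)\leq C\bigl[1+\psi(M_n(\omega))+Z_I^{(n)}(\omega)\bigr], \qquad Z_I^{(n)}(\omega)\leq C\bigl[1+\psi(M_n(\omega))+Z_I(\omega)\bigr].
\]

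Since $M_n$ is pointwise finite, these bounds give the pointwise equality of events $\{Z_I<\infty\}=\{Z_I^{(n)}<\infty\}$; the right-hand side manifestly lies in $\sigma(X_k : k\geq n)$, and the measurability of $Z_I^{(n)}$ itself is handled in standard fashion as a countable supremum (over $q\in\mathbb{Q}\cap(0,1)$) of Fubini-type averages of the jointly measurable, continuous integrand $(\omega,\theta)\mapsto \psi(|F_n(\omega;qe^{i\theta})|)$. Letting $n$ vary gives $G_I\in\bigcap_{n\geq 1}\sigma(X_k : k\geq n)=\mathcal T$. The only substantive input is the sub-additivity hypothesis \eqref{eq:sub-ad}, which is exactly what absorbs the polynomial perturbation $P_n$; without such a condition on $\psi$ the argument breaks for test functions that amplify bounded additive perturbations too severely. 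I do not expect a genuine obstacle — this is essentially a measurability statement, and the only care required is in the countable-supremum/Fubini bookkeeping that makes $Z_I$ and $Z_I^{(n)}$ bona fide random variables.
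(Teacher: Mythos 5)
Your proposal is correct and follows essentially the same route as the paper: decompose $F$ into the polynomial head and tail, use monotonicity and sub-additivity of $\psi$ to deduce that $\{Z_I<\infty\}$ and $\{Z_I^{(n)}<\infty\}$ coincide pointwise, and note that the latter lies in $\sigma(X_k : k\geq n)$. The only cosmetic difference is that the paper bounds the head by $\sum_{k=0}^{N}|X_k|$ rather than by $M_n(\omega)=\sup_{|z|\leq 1}|P_n(\omega;z)|$, but both are finite $\sigma(X_0,\ldots,X_{n-1})$-measurable dominators and the argument is otherwise identical.
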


\noindent {\it Proof of Fact \ref{fct:2-1}.} For $N=0,1,2,\ldots$, let $F_N (z) = \sum_{k=0}^N X_k z^k$, and let $R_N := F - F_N$. Note that for any $0<q<1$, 
by the triangle inequality we have
	\begin{align} \label{eq:tri-tail}
		\left| |R_N (qe^{i\theta})| - |F(qe^{i\theta})| \right| \leq |F_N(qe^{i\theta}) | \leq \sum_{k=0}^N |X_k|, \quad \textrm{a.s.}
	\end{align}
Therefore, we get from \eqref{eq:sub-ad} that
	\begin{align}
		Z_I /C- 1 -\psi\left( \sum_{k=0}^N|X_k| \right) \leq  \sup_q \fint_I \psi(|R_N (qe^{i\theta}) |) \, d\theta \leq C\left [ 1 +  Z_I + \psi \left(\sum_{k=0}^N |X_k| \right) \right],
	\end{align}
where we have also used \eqref{eq:tri-tail} and the monotonicity of $\psi$.	
It follows that
	\begin{align}
		G_I = \left\{ \omega : \sup_q \int_I \psi(|R_N(\omega ; qe^{i\theta}) |) \, d\theta <\infty \right\},
	\end{align}
and the latter event belongs to $\sigma (\{X_{N+1}, X_{N+2}, \ldots\})$. Since $N$ was arbitrary, the claim follows. \prend

\medskip

We end this section by including a standard argument showing that the event of $\psi$-(SNB) for random power 
series with independent coefficients obeys a zero-one law.

\begin{proposition} \label{prop:snb-0-1}
	Let $\{X_k\}_{k=0}^\infty$ be a sequence of independent r.v.s. and assume that the random power series $F(z) = \sum_{k=0}^\infty X_k z^k$ has $r_F=1$ a.s.
	Then, for any test function $\psi$ which satisfies \eqref{eq:sub-ad} we have the following dichotomy:
	Either $F$ has $|z|=1$ as $\psi$-(SNB) a.s. or there exists $I \subset (0,2\pi)$ so that 
			\begin{align}
				\sup_q \int_I \psi(|F(qe^{i\theta})| ) \, d\theta <\infty, \quad a.s.
			\end{align}
	\end{proposition}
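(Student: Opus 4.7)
The plan is to combine Fact~\ref{fct:2-1} with Kolmogorov's zero--one law arc-by-arc, and then upgrade the resulting dichotomy from a countable family of arcs to all arcs by a monotonicity argument. Since the coefficients $\{X_k\}_{k=0}^{\infty}$ are independent, the tail $\sigma$-field $\mathcal T = \bigcap_{n=1}^\infty \sigma(\{X_k : k \geq n\})$ is $P$-trivial. By Fact~\ref{fct:2-1}, the event $G_I = \{Z_I < \infty\}$ belongs to $\mathcal T$ for every arc $I \subset (0, 2\pi)$, so $P(G_I) \in \{0, 1\}$. Let $\mathcal A$ denote the (countable) collection of open sub-arcs of $(0, 2\pi)$ with rational endpoints.

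Suppose first that $P(G_J) = 1$ for some $J \in \mathcal A$. Then almost surely $\sup_{q \in (0,1) \cap \mathbb Q} \fint_J \psi(|F(qe^{i\theta})|)\, d\theta < \infty$. For the test functions of interest the map $q \mapsto \fint_J \psi(|F(qe^{i\theta})|)\, d\theta$ is continuous on $(0,1)$ (by uniform continuity of $|F|$ on compact subsets of $\mathbb D$ together with the continuity of $\psi$), so the rational supremum coincides with the supremum over all of $(0,1)$; this realizes the second alternative with the explicit arc $J$.

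Otherwise $P(G_J) = 0$ for every $J \in \mathcal A$, and by countable subadditivity there is a full-probability event on which $Z_J = \infty$ simultaneously for all $J \in \mathcal A$. Given an arbitrary arc $I \subset (0, 2\pi)$, choose some $J \in \mathcal A$ with $J \subset I$. Since $\psi \geq 0$, we have $\int_I \psi(|F(qe^{i\theta})|)\, d\theta \geq \int_J \psi(|F(qe^{i\theta})|)\, d\theta$ pointwise in $q$, so $Z_I \geq (|J|/|I|)\, Z_J = \infty$ almost surely for every arc $I$. Combined with $Z_I \leq \sup_{0<r<1} \fint_I \psi(|F(re^{i\theta})|)\, d\theta$, this yields the $\psi$-(SNB) property almost surely.

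The only delicate step is replacing the rational-radii supremum in the definition of $Z_I$ by the full supremum appearing in the statement of the second alternative; this requires continuity of $\psi$ (or, more generally, lower semi-continuity, in which case Fatou applied to the lower semi-continuous integrand $\psi \circ |F|$ suffices), which is automatic for the canonical test functions $\psi(t) = t^p$ and $\psi(t) = \log^+ t$ treated in the paper.
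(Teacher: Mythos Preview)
Your proof is correct and follows essentially the same route as the paper: a countable family of arcs (you use rational endpoints, the paper uses the partition intervals $J_m^k$), Fact~\ref{fct:2-1} to place each $G_I$ in the tail $\sigma$-field, Kolmogorov's zero--one law, and monotonicity in $I$ to pass from the countable family to all arcs. One small remark: your final paragraph about upgrading the rational-radii supremum to the full supremum is unnecessary here, since the second alternative in the statement is formulated with $\sup_q$ (rational $q$, as in Fact~\ref{fct:2-1}), and for the first alternative you only need the one-sided inequality $Z_I \le \sup_{0<r<1}\fint_I \psi(|F(re^{i\theta})|)\,d\theta$, which holds without any continuity assumption on $\psi$.
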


\noindent {\it Proof.} Let ${\mathcal P} = \bigcup_{m=1}^\infty {\mathcal P}_m$ be the collection of all sub-intervals of the partitions ${\mathcal P}_m$ 
of $[0,2\pi)$ with ${\rm mesh}({\mathcal P}_m)= 2\pi/m$, i.e., 
	\begin{align}
		{\mathcal P}_m = \left\{J_m^k = \left[ \frac{2\pi}{m}(k-1), \frac{2\pi}{m} k \right) : k=1,2,\ldots,m \right\}, \quad m=1,2,\ldots .
	\end{align}
Note that for any ordinary $f(z)= \sum_{k=0}^\infty c_k z^k$ with $r_f=1$ the circle $|z|=1$ is {\it not} a $\psi$-(SNB) iff 
there exists $I\subset (0,2\pi)$ so that $\sup_r \int_I\psi( | f(re^{i\theta})| ) \, d\theta < \infty$. Equivalently, iff there exists $J_m^k$ which makes the 
latter quantity finite. Going back to the probabilistic framework, if we consider the random events
	\begin{align}
	G_{m,k} = \left\{ \omega \in \Omega : \sup_q \int_{J_m^k} \psi (|F(\omega ; qe^{i\theta})|) \, d\theta < \infty \right\}, \quad 1\leq k\leq m, \quad m=1,2,\ldots,
	\end{align}
and $G$ denotes the event that $F$ does not have $|z|=1$ as $\psi$-(SNB), then 
	\begin{align}
		G = \bigcup_{m=1}^\infty \bigcup_{k=1}^m G_{m,k}.
	\end{align}
Hence, if we assume that $|z|=1$ is not a $\psi$-(SNB) a.s., then $P(G)>0$ and by Boole's inequality we infer that there exist $m\in \mathbb N$ and 
$1\leq k\leq m$ so that $P(G_{m,k})>0$. Because of Fact \ref{fct:2-1} we know that $G_{m,k} \equiv G_{J_m^k}$ is in the tail $\sigma$-field, and since
$\{X_k\}$ are assumed independent, Kolmogorov's 0-1 Law implies that $P(G_{m,k})=1$, as claimed. \prend

\section{Main Results}


\subsection{Asymptotic anti-concentration} \label{s:3-1}

The following proposition motivates the anti-concentration conditions that will show up in the main result of this section, Theorem \ref{thm:s-nat-bd}.
It slightly generalizes a previous result from \cite{DV}.

\begin{proposition} \label{prop:nat-bd}
	Let $\{X_n\}_{n=0}^\infty$ be  a sequence of independent random variables. Suppose that there exists $\varepsilon>0$ with 
	\begin{align} \label{eq:cond-nb}
		\sum_{n=0}^\infty \left(1- Q(X_n,\varepsilon) \right) = \infty.
	\end{align} 
	Then, the random power series $F(z)=\sum_{n=0}^\infty X_n z^n $ has a.s. radius of convergence $r_F\leq 1$.
Furthermore, if\footnote{E.g., this can be achieved under the additional assumption that $\limsup_n \mathbb E|X_n|<\infty$.} 
$r_F=1$ a.s., then $F$ has $|z|=1$ as natural boundary a.s.
\end{proposition}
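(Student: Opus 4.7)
The plan is to handle the two conclusions separately, using Borel--Cantelli for the radius bound and then reducing to Ryll-Nardzewski's dichotomy (Theorem \ref{thm:R-N}) for the natural boundary. For the radius of convergence, I would place the optimizing window at $[-\varepsilon/2,\varepsilon/2]$ in the definition of $Q$ to get $1-Q(X_n,\varepsilon)\le P(|X_n|>\varepsilon/2)$ for every $n$. The hypothesis $\sum_n (1-Q(X_n,\varepsilon))=\infty$ then forces $\sum_n P(|X_n|>\varepsilon/2)=\infty$, and by the independence of $\{X_n\}$ the second Borel--Cantelli lemma yields $|X_n|>\varepsilon/2$ infinitely often a.s.; in particular $\limsup_n|X_n|^{1/n}\ge 1$ a.s., i.e., $r_F\le 1$.

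For the natural boundary, assume $r_F=1$ a.s. By Theorem \ref{thm:R-N}, either $F$ already admits $\mathbb T$ as a natural boundary a.s., or there is a deterministic series $f(z)=\sum_{k}c_k z^k$ with $r_{F-f}>1$ a.s.; I would rule out the second alternative by showing it forces $\sum_k(1-Q(X_k,\varepsilon))<\infty$. Indeed, by Hadamard, $r_{F-f}>1$ a.s.\ is equivalent to $\limsup_k|X_k-c_k|^{1/k}<1$ a.s., and since this $\limsup$ is measurable with respect to the tail $\sigma$-field of the independent sequence $\{X_k-c_k\}$ (the $c_k$ are deterministic, so the tail fields of $\{X_k-c_k\}$ and $\{X_k\}$ coincide), Kolmogorov's 0--1 law pins it to a deterministic constant $L_0<1$. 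Choosing any $r\in(L_0,1)$, the independent events $\{|X_k-c_k|>r^k\}$ then occur only finitely often a.s., so by the contrapositive of the second Borel--Cantelli lemma $\sum_k P(|X_k-c_k|>r^k)<\infty$. For all large $k$ one has $r^k<\varepsilon/2$, hence $\{|X_k-c_k|>\varepsilon/2\}\subset\{|X_k-c_k|>r^k\}$ and $\sum_k P(|X_k-c_k|>\varepsilon/2)<\infty$. Finally, shifting the optimizing window in $Q$ to $[c_k-\varepsilon/2,c_k+\varepsilon/2]$ yields $1-Q(X_k,\varepsilon)\le P(|X_k-c_k|>\varepsilon/2)$, and summing contradicts the hypothesis.

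The Borel--Cantelli accounting is routine; the conceptually key step, and what I expect to be the main obstacle, is the realization that the perturbation $\{c_k\}$ supplied by Ryll-Nardzewski must track $X_k$ at a \emph{geometric} rate. This is exactly what makes the shifted interval $[c_k-\varepsilon/2,c_k+\varepsilon/2]$ a legitimate choice in the L\'evy concentration function for all sufficiently large $k$ and closes the contradiction; any weaker rate of decay of $|X_k-c_k|$ would fail to absorb the centering into a fixed $\varepsilon$-window.
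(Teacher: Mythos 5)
Your argument is correct and matches the paper's proof in all essentials: the radius bound via the second Borel--Cantelli lemma applied to $\{|X_n|>\varepsilon/2\}$, and the reduction to Theorem~\ref{thm:R-N} followed by a contradiction obtained from the contrapositive of the second Borel--Cantelli lemma and the shifted-window estimate $1-Q(X_k,\varepsilon)\le P(|X_k-c_k|>\varepsilon/2)$. The detour through Kolmogorov's 0--1 law to fix a deterministic $L_0$ and the auxiliary radius $r$ is harmless but unnecessary: the paper simply notes that $r_{F-f}>1$ a.s.\ already forces $\sum_n |X_n-c_n|<\infty$ a.s., hence $\{|X_n-c_n|>\varepsilon/2\}$ occurs only finitely often a.s., and the contrapositive of Borel--Cantelli applies directly.
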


\noindent {\it Proof.} Let $b_n: = 1 - Q(X_n,\varepsilon)$. In particular, $P(|X_n|>\varepsilon / 2) \geq b_n$. The 2nd Borel-Cantelli Lemma along with \eqref{eq:cond-nb}
implies that $\{ |X_n| > \varepsilon/2 \; {\rm i.o.}\}$ is a sure event, hence $\limsup |X_n|^{1/n} \geq 1$ a.s.

For the natural boundary we may argue with Theorem \ref{thm:R-N}. If $|z|=1$ is not a 
natural boundary for $F(z) = \sum_{n=0}^\infty X_n z^n$, then there exists (deterministic) $(c_n)$ such that the $H(z) = F(z) - \sum_{n=0}^\infty c_n z^n$ 
has a.s. radius of convergence $r_H>1$. 
Then, we have that $\sum_{n=0}^\infty |X_n-c_n|$ converges a.s. This, in turn, implies (by the 2nd Borel-Cantelli lemma) that for every $\delta>0$ we have
	\begin{align}
		\sum_{n=0}^\infty \left( 1 - Q(X_n,\delta) \right) \leq \sum_{n=0}^\infty P(|X_n-c_n| > \delta/2) < \infty,
	\end{align}
which clearly contradicts the anti-concentration assumption \eqref{eq:cond-nb}.  \prend

\medskip



			

Although Proposition \ref{prop:nat-bd} establishes mere naturally boundary, it will be instructive for what follows (Theorem \ref{thm:s-nat-bd}) to compare
the conditions of Proposition \ref{prop:nat-bd} with those of Theorem \ref{thm:BS} of Breuer and Simon for strong natural boundary. The comparison
is in the following:

\begin{fact} \label{fct:dv-bs}
	Let $\xi$ be a random variable with ${\rm Var}[\xi] \geq \theta  >0$ and  $\mathbb E|\xi |^4 <\infty$. Then, we may conclude that
		\begin{align}
			1-Q\left( \xi,\sqrt{ 2\theta} \right) > c\frac{({\rm Var}[\xi] )^2}{\|\xi - \mathbb E\xi\|_4^4}.
		\end{align}
\end{fact}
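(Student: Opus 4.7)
The plan is to reduce the concentration-function estimate to a uniform tail bound on the centered variable $\bar\xi := \xi - \mathbb E\xi$. By translation invariance of $Q$ and the midpoint parametrization of intervals of length $\sqrt{2\theta}$ one has
\[
   Q(\xi,\sqrt{2\theta}) = \sup_{w\in\mathbb R}\mathbb P\bigl(|\bar\xi-w|\leq \sqrt{\theta/2}\bigr),
\]
so the claim is equivalent to the uniform lower bound $\mathbb P(|\bar\xi-w|>\sqrt{\theta/2}) > c\,({\rm Var}[\xi])^2/\|\bar\xi\|_4^4$ for every $w\in\mathbb R$.

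For each fixed $w$, I would apply the Paley--Zygmund inequality to the non-negative variable $Y_w := (\bar\xi - w)^2$ at threshold $s = \theta/2$. Direct computation gives $\mathbb E Y_w = \sigma^2 + w^2$, where $\sigma^2 = {\rm Var}[\xi]$, and the convex bound $(a-b)^4 \leq 8(a^4 + b^4)$ yields $\mathbb E Y_w^2 \leq 8(\|\bar\xi\|_4^4 + w^4)$. The hypothesis $\sigma^2 \geq \theta$ provides the margin $\mathbb E Y_w - s \geq \sigma^2/2 + w^2 > 0$, so Paley--Zygmund produces
\[
   \mathbb P\bigl(|\bar\xi - w| > \sqrt{\theta/2}\bigr) \;\geq\; \frac{(\sigma^2/2 + w^2)^2}{8\bigl(\|\bar\xi\|_4^4 + w^4\bigr)}.
\]

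The final step is to minimize the right-hand side in $w$. Writing $x = w^2\geq 0$, the function $f(x) = (\sigma^2/2+x)^2/[8(\|\bar\xi\|_4^4+x^2)]$ is unimodal on $[0,\infty)$ with a unique interior maximum at $x_\ast = 2\|\bar\xi\|_4^4/\sigma^2$ (from an elementary derivative computation), so its infimum is attained at an endpoint. Comparing $f(0) = \sigma^4/(32\|\bar\xi\|_4^4)$ with $\lim_{x\to\infty}f(x) = 1/8$ and invoking Jensen's inequality $\sigma^4 \leq \|\bar\xi\|_4^4$ gives $f(0)\leq 1/32 < 1/8$, so $\min_{x\geq 0} f = f(0)$. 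Combining, $1 - Q(\xi,\sqrt{2\theta}) \geq \sigma^4/(32\|\bar\xi\|_4^4)$, and any $c<1/32$ delivers the strict inequality.

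The main obstacle is the borderline regime $\sigma^2 \approx \theta$: a direct symmetrization (bounding $Q(\xi,\lambda)^2 \leq \mathbb P(|\xi-\xi'|\leq\lambda)$ and applying Paley--Zygmund to $(\xi-\xi')^2$) produces the margin $\mathbb E(\xi-\xi')^2 - 2\theta = 2(\sigma^2-\theta)$, which collapses to zero when $\sigma^2 = \theta$ and therefore cannot yield the $\sigma^4/\|\bar\xi\|_4^4$ form. The shift trick above sidesteps this: the mean $\mathbb E Y_w$ is inflated by $w^2$, leaving a positive margin $\sigma^2/2$ that is uniform in $w$. The convexity control of $(a-b)^4$ is what keeps third-moment cross-terms out of the fourth-moment denominator and makes the $w$-minimization explicit.
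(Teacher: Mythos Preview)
Your argument is correct and follows essentially the same route as the paper: apply Paley--Zygmund to $(\xi-v)^2$ uniformly in the shift $v$, control the fourth moment of $\xi-v$ by $\|\xi-\mathbb E\xi\|_4^4$ plus the shift via convexity, and minimize over the shift. The only cosmetic differences are that the paper invokes Paley--Zygmund at the fixed level $\tfrac12\mathbb E|\xi-v|^2$ (using ${\rm Var}[\xi]\le \mathbb E|\xi-v|^2$) rather than at the threshold $\theta/2$, and it simplifies the minimization by first bounding $(\sigma^2+w^2)^2\ge \sigma^4+w^4$ so that the resulting ratio $(\sigma^4+u)/(\|\bar\xi\|_4^4+u)$ is monotone in $u=w^4$; this yields $c=2^{-6}$, while your sharper handling gives $c=2^{-5}$.
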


\noindent {\it Proof of Fact \ref{fct:dv-bs}.} Using the standard ${\rm Var}[\xi] \leq \mathbb E|\xi-v|^2$ for any $v\in \mathbb R$, and the 
Paley-Zygmund inequality \cite{Stee} we obtain
	\begin{align} \label{eq:3-4}
		P( |\xi-v| > \sqrt{\theta/2}) \geq P \left (  |\xi -v|^2 > \frac{1}{2} \mathbb E|\xi-v|^2 \right) \geq \frac{1}{4} \frac{(\mathbb E|\xi -v|^2)^2}{\mathbb E|\xi-v|^4}.
	\end{align}
	We estimate the $\inf_{v\in \mathbb R} \frac{(\mathbb E|\xi -v|^2)^2}{\mathbb E|\xi-v|^4}$ from below. To this end, notice that
	\begin{align}
		(\mathbb E|\xi -v|^2)^2 = \left( {\rm Var}[\xi] + (v- \mathbb E\xi)^2\right)^2 \geq \left( {\rm Var}[\xi] \right)^2	+ (v- \mathbb E\xi)^4,
	\end{align}
and by Minkowski's inequality
	\begin{align}
		\mathbb E|\xi-v|^4 \leq \left( \| \xi - \mathbb E\xi\|_4 + |v - \mathbb E\xi| \right)^4 \leq 2^4 (\|\xi -\mathbb E\xi\|_4^4 + (v- \mathbb E\xi)^4).	
	\end{align}
It follows that 
	\begin{align} \label{eq:3-7}
		\inf_{v\in \mathbb R} \frac{(\mathbb E|\xi -v|^2)^2}{\mathbb E|\xi-v|^4} 
			\geq \inf_{v\in \mathbb R} \frac{\left( {\rm Var}[\xi] \right)^2	+ (v- \mathbb E\xi)^4}{2^4 (\|\xi -\mathbb E\xi\|_4^4 + (v- \mathbb E\xi)^4)} 
				= 2^{-4} \inf_{w\geq 0} \frac{\left( {\rm Var}[\xi] \right)^2 + w}{\|\xi -\mathbb E\xi\|_4^4 + w} = 2^{-4} \frac{ \left( {\rm Var}[\xi]\right)^2 }{ \|\xi - \mathbb E\xi \|_4^4}.
	\end{align}
Combining \eqref{eq:3-4} and \eqref{eq:3-7} the result follows with $c=2^{-6}$. \prend

\medskip

Continuing our discussion, taking into account Fact \ref{fct:dv-bs}, 
the assumptions in Theorem \ref{thm:BS}, and that $\mathbb E|X_n-\mathbb E[X_n]|^4 \leq (2M)^2 {\rm Var}[X_n]$ we infer
	\begin{align}
		1 - Q \left( X_n, \sqrt{\theta} \right) \geq c \frac{\theta}{M^2},
	\end{align}
for all $n$ with ${\rm Var}[X_n] > \theta/2$. On the other hand, we have the following simple estimate:
\begin{align} \label{eq:f-2}
		{\rm Var}[\xi] \geq \varepsilon^2 P( |\xi - \mathbb E\xi| > \varepsilon) \geq \varepsilon^2 (1 - Q(\xi , 2\varepsilon)), \quad \forall \, \varepsilon>0.
\end{align}
Therefore, under the assumptions of Theorem \ref{thm:BS}, Fact \ref{fct:dv-bs} and \eqref{eq:f-2} imply that  
	\begin{align} \label{from_var_to_wac}
		\limsup_n {\rm Var} [X_n] > 0 \quad \Longleftrightarrow \quad \limsup_n \left [ 1-Q(X_n,\varepsilon) \right ] >0 \quad
			 \Longrightarrow \quad \sum_{n =0}^\infty (1- Q(X_n,\varepsilon)) =\infty,
	\end{align}
for any $0< \varepsilon <\sqrt{\limsup_n {\rm Var}[X_n] }$.
Notice, however, that condition \eqref{eq:cond-nb} in conjunction with \eqref{eq:f-2} implies the divergence of the series of variances
which is a priori weaker than $\limsup_n {\rm Var}[X_n] >0$.
We elaborate further on this observation in Section \ref{S:bd-case}.

\medskip

The purpose of this section is to prove that under the assumption \eqref{eq:cond-nb} one can indeed infer that the random power series has (SNB), 
thereby extending the result of Breuer and Simon since no moment assumption is made. 
We will establish a stronger phenomenon under a technically weaker assumption. This technical condition, which can be viewed as weighted version
of \eqref{eq:cond-nb}, permits us to treat a greater range of probabilistic constructions which exhibit strong singularity. 
We comment on its utility in Remark \ref{rem:main-snb-2}. Our result reads as follows:

\begin{theorem} \label{thm:s-nat-bd}
Let $\{X_k\}_{k=0}^\infty$ be a sequence of independent random variables on a probability space $(\Omega, \Sigma, P)$
and assume that the random power series $F(\omega; z) = \sum_{k=0}^\infty X_k(\omega) z^k$ has a.s. radius of convergence $r_F=1$. 
Suppose that there exists a bounded sequence $(t_k)$, $t_k\geq 0$ such that 
	\begin{align} \label{eq:cond-nb-2}
		\sum_{k=0}^\infty t_k^2 \left( 1 - Q(X_k, t_k) \right) = \infty.
	\end{align} 
Then, for any test function $\psi$ we have that the random power series
$F$ a.s. has $|z|=1$ as $\psi$-(SNB). 
In particular, for a.e. $\omega$ the random power series $F(\omega ; \cdot)$ has the circle $\{z : |z|=1\}$ as  Nevanlinna-(SNB).
\end{theorem}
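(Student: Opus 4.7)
My strategy is to combine Lemma~\ref{lem:mix-sb} (mixture small-ball) with Rogozin's inequality (Lemma~\ref{lem:Rogo}) applied \emph{separately} to the real and imaginary parts of $F(qe^{i\theta})$. The identity $\cos^2+\sin^2\equiv 1$ will then make the resulting small-ball estimate \emph{uniform} in $\theta$, sidestepping the usual need to analyze $\theta$-by-$\theta$ divergence of weighted trigonometric sums.

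Fix an arc $I\subset(0,2\pi)$, a radius $q\in(0,1)$, a level $T>0$, and set $s:=\sup\{u\geq 0:\psi(u)\leq 2T\}<\infty$ (finite because $\psi\to\infty$). Applying Lemma~\ref{lem:mix-sb} with $h(\omega,\theta)=\psi(|F(\omega;qe^{i\theta})|)$ and $I$ carrying its normalized Lebesgue measure gives
\[
P\Bigl(\fint_I\psi(|F(qe^{i\theta})|)\,d\theta\leq T\Bigr)\;\leq\; 2\fint_I P(|F(qe^{i\theta})|\leq s)\,d\theta,
\]
which reduces the task to a pointwise small-ball estimate for $|F(qe^{i\theta})|$. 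Since $(t_k)$ is bounded, for $s$ sufficiently large I can apply Rogozin's inequality to $\mathrm{Re}\,F_N(qe^{i\theta})=\sum_{k=0}^N q^k\cos(k\theta)X_k$ with parameters $\lambda_k=t_k q^k|\cos(k\theta)|$ and $L=2s$, arranging $\beta_k=\lambda_k/2$. A standard Portmanteau passage to the limit $N\to\infty$ (introduce a slack $2s\mapsto 2s+\delta$, use $F_N(qe^{i\theta})\to F(qe^{i\theta})$ a.s.\ together with the monotonicity $V_0(q,\theta;N)\uparrow V_0(q,\theta)$, then send $\delta\to 0$) yields
\[
Q\bigl(\mathrm{Re}\,F(qe^{i\theta}),2s\bigr)\;\leq\;\frac{Cs}{\sqrt{V_0(q,\theta)}},\qquad V_0(q,\theta):=\sum_{k\geq 0} t_k^2 q^{2k}\cos^2(k\theta)\bigl(1-Q(X_k,t_k)\bigr),
\]
and analogously $Q(\mathrm{Im}\,F(qe^{i\theta}),2s)\leq Cs/\sqrt{V_{\pi/2}(q,\theta)}$, where $V_{\pi/2}$ is the $\sin^2$-analogue.

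The crux is the $\theta$-independent identity
\[
V_0(q,\theta)+V_{\pi/2}(q,\theta)\;=\;\sum_{k\geq 0}t_k^2 q^{2k}\bigl(1-Q(X_k,t_k)\bigr)\;=:\;V^{\ast}(q).
\]
Because $\{|F|\leq s\}\subseteq\{|\mathrm{Re}\,F|\leq s\}\cap\{|\mathrm{Im}\,F|\leq s\}$, taking the better of the two Rogozin bounds and using $\max(V_0,V_{\pi/2})\geq V^{\ast}(q)/2$ produces
\[
P\bigl(|F(qe^{i\theta})|\leq s\bigr)\;\leq\;\frac{C's}{\sqrt{V^{\ast}(q)}},\qquad \theta\in I,
\]
uniformly in $\theta$. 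By monotone convergence the divergence hypothesis \eqref{eq:cond-nb-2} forces $V^{\ast}(q)\uparrow\infty$ as $q\uparrow 1$, so this uniform bound tends to $0$. Combining with the mixture reduction, for each $n\in\mathbb{N}$ I pick a rational $q_n<1$ with $V^{\ast}(q_n)$ large enough that $P(\fint_I\psi(|F(q_ne^{i\theta})|)\,d\theta\leq n)\leq 2^{-n}$. Borel--Cantelli then gives $\sup_q\fint_I\psi(|F(qe^{i\theta})|)\,d\theta=\infty$ almost surely. Intersecting over the countable family of arcs with rational endpoints, and using $\int_I\psi\geq\int_{I'}\psi$ for any rational sub-arc $I'\subseteq I$, extends this to every arc. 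Specializing to $\psi=\log^{+}$ recovers the Nevanlinna-(SNB) assertion.

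The main obstacle I anticipate is the Rogozin limit passage from $F_N$ to $F$ at $qe^{i\theta}$, since the concentration function is only upper semicontinuous under weak convergence; the $\delta$-slack described above is precisely what resolves this. Once this is in hand, the $\cos^2+\sin^2$ cancellation dissolves any residual $\theta$-dependence in the Rogozin ``variances'', and the passage from pointwise small-ball bound to integral control becomes immediate via Lemma~\ref{lem:mix-sb}. Note that no sub-additivity of $\psi$ is used in this route, which is why the conclusion holds for an arbitrary test function.
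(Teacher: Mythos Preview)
Your proposal is correct and follows essentially the same route as the paper: Rogozin applied to the real and imaginary parts of $F_N(qe^{i\theta})$, the $\cos^2+\sin^2=1$ identity to make the small-ball bound uniform in $\theta$, the mixture Lemma~\ref{lem:mix-sb} to pass to the arc integral, and Borel--Cantelli along a sequence $q_n\uparrow 1$. The only cosmetic differences are that the paper applies Rogozin first and the mixture lemma second (you reverse the order), and the paper handles the $N\to\infty$ passage via Fatou with a strict inequality rather than your Portmanteau $\delta$-slack; both orderings and both limit passages are equally valid.
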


The key ingredients of the proof are: (i) Rogozin's inequality (Lemma \ref{lem:Rogo}), and (ii) the small-ball estimate under mixtures (Lemma \ref{lem:mix-sb})
described in Subsection \ref{S:2-2}.

\bigskip

Now we turn to proving the main result. First we introduce some notation for typographical convenience.

\bigskip

\noindent {\bf Notation.} We fix $t_k \geq 0$ for which \eqref{eq:cond-nb-2} holds true. Let $q_k : = Q(X_k, t_k)$. Then, 
\eqref{eq:cond-nb-2} becomes $\sum_{k=0}^\infty t_k^2(1-q_k) = \infty$. We consider the auxiliary function $A(r)$ for $0<r<1$ defined by
	\begin{align}
		A(r) : = \sum_{k=0}^\infty t_k^2(1-q_k) r^{2k}.
	\end{align}
Note that $A(r)<\infty$ for all $0<r<1$ and $\lim_{r\uparrow 1} A(r) = \infty$. Let also $F_N(\omega ; z)$ and 
$A_N(r)$ be the corresponding partial sums for $F(\omega ; z)$ and $A(r)$ respectively. For each interval $I\subset (0,2\pi)$ and $0<r<1$ we define
the random variable 
	\begin{align}
		Y_{I,r} (\omega) : = \fint_I \psi(|F(\omega ; re^{i \theta})| \, d\theta \equiv \frac{1}{|I|} \int_I \psi(|F(\omega ; re^{i\theta})|) \, d\theta.
	\end{align}
Last, we define the countable set
	\begin{align}
		\Theta = \bigcup_{n=1}^\infty \left \{ \theta \in (0,2 \pi) : \sin (2n\theta) =0 \right\}.
	\end{align}

\medskip

\noindent {\it Proof of Theorem \ref{thm:s-nat-bd}.} With the above notation we have the following:

\begin{claim} \label{claim:main-sb}
	Let $I \subset (0, 2\pi)$ be an interval and let $0<r<1$. Then, for all $t\geq  \sup_k t_k $ we have
		\begin{align} \label{eq:main-sb}
			P \left( Y_{I,r} \leq t \right) \leq \frac{Ct}{ \sqrt{A(r)} },
		\end{align}
	where $C>0$ is a universal constant.
\end{claim}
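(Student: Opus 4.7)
The plan is to use the mixture lemma to reduce $\{Y_{I,r}\le t\}$ to a pointwise-in-$\theta$ anti-concentration event, and then to bound the pointwise probability by Rogozin's inequality applied to a cleverly chosen real one-dimensional projection of $F(re^{i\theta})$.

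First I apply Lemma \ref{lem:mix-sb} to $h(\omega,\theta)=\psi(|F(\omega;re^{i\theta})|)$ with normalized Lebesgue measure on $I$ playing the role of $\nu$, obtaining
\[
P(Y_{I,r}\le t)\le 2\fint_I P\bigl(\psi(|F(re^{i\theta})|)\le 2t\bigr)\,d\theta.
\]
Since $\psi$ is non-decreasing, $\psi(|F|)\le 2t$ forces $|F|\le M$ where $M=M(\psi,t):=\sup\{s\ge 0:\psi(s)\le 2t\}$; in the identity case $\psi(s)=s$ this is simply $M=2t$. Thus it suffices to bound $P(|F(re^{i\theta})|\le M)$ by $CM/\sqrt{A(r)}$ uniformly in $\theta\in I$.

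For any phase $\phi\in[0,2\pi)$ the real projection $\mathrm{Re}(e^{-i\phi}F(re^{i\theta}))=\sum_{k=0}^\infty X_k\, r^k\cos(k\theta-\phi)$ is a sum of independent real random variables, and $|F|\ge |\mathrm{Re}(e^{-i\phi}F)|$. Hence
\[
P(|F(re^{i\theta})|\le M)\le Q\bigl(\mathrm{Re}(e^{-i\phi}F(re^{i\theta})),\,2M\bigr).
\]
I then apply Rogozin's inequality (Lemma \ref{lem:Rogo}) to the $N$-th partial sum with $\lambda_k=t_k r^k|\cos(k\theta-\phi)|$, noting that $Q(X_k r^k\cos(k\theta-\phi),\lambda_k)=q_k$. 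The hypothesis $t\ge\sup_k t_k$ ensures that the truncated parameter $\beta_k=\min\{2M,\lambda_k/2\}$ equals $\lambda_k/2$. Letting $N\to\infty$ (which only decreases the concentration function, since $Q(A+B,\cdot)\le Q(A,\cdot)$ for independent $A,B$) yields
\[
P(|F(re^{i\theta})|\le M)\le \frac{CM}{\sqrt{\sum_{k=0}^\infty t_k^2 r^{2k}(1-q_k)\cos^2(k\theta-\phi)}}.
\]

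The main obstacle is the selection of $\phi$: integrating the reciprocal square root directly in $\theta$ is blocked by the convexity of $x\mapsto x^{-1/2}$, so Jensen's inequality points the wrong way. I circumvent this by choosing $\phi=\phi(\theta)$ pointwise, using the elementary identity $\fint_0^{2\pi}\cos^2(k\theta-\phi)\,d\phi=1/2$ (valid for every $k\ge 1$; the $k=0$ term is irrelevant). This gives
\[
\fint_0^{2\pi}\sum_{k=0}^\infty t_k^2 r^{2k}(1-q_k)\cos^2(k\theta-\phi)\,d\phi=\frac{A(r)}{2},
\]
so by the mean value principle, for each $\theta$ there exists $\phi(\theta)$ making this sum at least $A(r)/2$. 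Substituting yields the uniform-in-$\theta$ bound $P(|F(re^{i\theta})|\le M)\le C'M/\sqrt{A(r)}$, which integrated over $I$ and inserted into the mixture estimate produces $P(Y_{I,r}\le t)\le C''M/\sqrt{A(r)}$. In the identity case $\psi(s)=s$ we have $M=2t$, recovering the stated bound $Ct/\sqrt{A(r)}$.
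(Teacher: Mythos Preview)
Your argument has the right ingredients but applies them in an order that loses the dependence on $t$ for general test functions $\psi$. By invoking the mixture lemma \emph{first} and only then passing from $\psi(|F|)\le 2t$ to $|F|\le M(\psi,t)$, your final bound is $CM/\sqrt{A(r)}$, which matches $Ct/\sqrt{A(r)}$ only when $\psi$ is the identity, as you yourself note. For slowly growing $\psi$ (say $\psi=\log^+$) one has $M\asymp e^{2t}$, so the stated inequality is not recovered. A secondary casualty of this ordering is your check that $\beta_k=\lambda_k/2$ in Rogozin: this requires $2M\ge \lambda_k/2$, hence $M\gtrsim\sup_k t_k$, which for general $\psi$ is \emph{not} implied by $t\ge\sup_k t_k$.

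The paper avoids both issues by reversing the order: it first proves the pointwise estimate $P(|F(re^{i\theta})|<t)\le C't/\sqrt{A(r)}$ for $t\ge\sup_k t_k$ (this is where Rogozin is applied, with $L=t$ so the restriction is immediate), \emph{then} uses monotonicity of $\psi$ to get $P(\psi(|F(re^{i\theta})|)<\psi(t))\le C't/\sqrt{A(r)}$, and \emph{only then} applies the mixture lemma. The conclusion is $P(Y_{I,r}<\psi(t))\lesssim t/\sqrt{A(r)}$ --- note the $\psi(t)$ on the left, which is in fact what the paper's proof establishes and what is used downstream (the claim as printed has $t$ where $\psi(t)$ is intended). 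Your argument is easily repaired by adopting this order.

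One point where your route is a pleasant variant: your phase-averaging device, choosing $\phi(\theta)$ so that $\sum_k t_k^2 r^{2k}(1-q_k)\cos^2(k\theta-\phi)\ge A(r)/2$ via $\fint_0^{2\pi}\cos^2(k\theta-\phi)\,d\phi=1/2$, is a clean alternative to the paper's dichotomy ``either the $\cos^2$-sum or the $\sin^2$-sum is $\ge A_N(r)/2$'' (which is just the two-point version of the same average, at $\phi=0$ and $\phi=\pi/2$). The paper also excises a countable null set $\Theta$ of $\theta$'s where some $\cos k\theta$ or $\sin k\theta$ vanishes; your continuous choice of $\phi$ sidesteps this bookkeeping.
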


\noindent {\it Proof of Claim \ref{claim:main-sb}.} Let $N\geq 1$ be sufficiently large so that 
$\sum_{k=0}^N t_k^2(1-q_k) >1$, and let $\theta \in I^\ast := I \setminus \Theta$ be arbitrary, but fixed. Note that, either
$\sum_{k=0}^N t_k^2(1-q_k )r^{2k} \cos^2 k\theta \geq \frac{1}{2}A_N(r)$ or $\sum_{k=0}^N t_k^2(1-q_k )r^{2k} \sin^2 k\theta \geq \frac{1}{2}A_N(r)$. 
We assume that the former case holds (we work similarly in the latter case). Then, we may write
	\begin{align}
		P \left( |F_N(\omega ; re^{i\theta}) |\leq t \right) & \leq P \left( \left| \sum_{k=0}^N X_k(\omega) r^k \cos k\theta \right| \leq t \right) \leq 
		\frac{C t}{\sqrt{ \sum_{k=0}^N (t_k r^k \cos k\theta)^2 (1-q_k) } },
	\end{align}
where in the last passage we have applied Rogozin's inequality 
for the independent random variables $\xi_k = X_k r^k \cos k \theta$, 
for $\lambda_k = t_k r^k |\cos k\theta|$, and for $L = t$ which clearly satisfies the required restriction 
``$L \geq  \frac{1}{2} \max_k \lambda_k$''. Because we have assumed the former case, the latter estimate becomes
	\begin{align} \label{eq:main-sb-1}
		P \left( |F_N(\omega; re^{i\theta})| \leq t \right) \leq \frac{\sqrt{2}C t}{ \sqrt{A_N(r)}},
	\end{align}
for any $N$ and $\theta$ as above. Since $N$ was arbitrarily large, Fatou's lemma and the a.s. (uniform) convergence of $F_N$ to $F$ yield that 
	\begin{align} \label{eq:main-sb-2}
		P \left(  |F(\omega ; re^{i\theta})| < t  \right) \leq \liminf_N P\left( |F_N(\omega ; re^{i\theta})| \leq t\right ) \stackrel {\eqref{eq:main-sb-1}} \leq \frac{C' t}{\sqrt{A(r)}}.
	\end{align}
The monotonicity of $\psi$ yields that 
	\begin{align}
			P\left(  \psi (|F(\omega ; re^{i\theta})| ) < \psi (t)  \right) \leq \frac{C't}{ \sqrt{A(r)}},
		\end{align}
	On the other hand, since $\theta \in I^\ast$ was arbitrary, an appeal to Lemma \ref{lem:mix-sb} for ``$(Y,\nu)$ being the $I^\ast$ 
equipped with the normalized Lebesgue measure'' and ``$\alpha= \psi(t)$'' yields 
	\begin{align} \label{eq:main-sb-3}
		P \left( \fint_{I^\ast} \psi(|F(\omega ; re^{i\theta}) |) \, d\theta < \psi(t) \right) \leq \frac{4 C' t}{\sqrt{A(r)} }.
	\end{align}
It remains to notice that $Y_{I,r} (\omega) \stackrel{\rm a.s.} =  \fint_{I^\ast} \psi( |F(\omega ; re^{i\theta}) | ) \, d\theta$, 
since $\Theta$ is a null set. This proves the Claim. \prend

\medskip

Continuing with the proof of Theorem \ref{thm:s-nat-bd}, 
if we employ \eqref{eq:main-sb} we may conclude the assertion as follows:
Fix $I\subset (0,2\pi)$. Let $r_k\in (0,1)$ such 
that\footnote{The mapping $r \mapsto A(r)$ is increasing, continuous with $\lim_{r\downarrow 0} A(r) =0$ and $\lim_{r\uparrow 1}A(r)=\infty$.} 
$A(r_k)=k^6$. Applying \eqref{eq:main-sb} for $r=r_k$ and $t=k$ we obtain for the r.v.s $Z_{I,k} := Y_{I,r_k}$ and the events
$U_{I,k} := \{Z_{I,k} \leq \psi(k)\}$ the following:
	\begin{align}
		P \left( Z_{I,k} \leq \psi(k) \right) \leq \frac{C}{ k^2} \quad \Longrightarrow \quad P(\limsup_k U_{I,k}) =0.
	\end{align}
Finally, if $U_{I,\infty} := \limsup_k U_{I,k}$, then $P(U_{I, \infty}^c)=1$ for every $I\subset (0,2\pi)$ and if $\omega \notin U_{I,\infty}$, then there exists 
$k_0(\omega)\in \mathbb N$ so that $Z_{I,k} > k$ for all $k\geq k_0$. The result readily follows if we consider a countable base of intervals 
for the topology of $(0,2\pi)$. \prend

\begin{remark}
	Let us point out that the boundedness on the anti-concentration levels $(t_k)$ is not a 
	mere technicality of the approach followed. The following
	example shows that we may drop the boundedness of $(t_k)$ and still have the asymptotic anti-concentration condition \eqref{eq:cond-nb-2} 
	whereas the random series is a.s. extendable beyond the unit circle.
\end{remark}

\begin{example} \label{ex:nec}
	Let $\{Y_k\}_{k=0}^\infty$ be independent r.v.'s with $P(Y_k =0) = (k+1)^{-2}$ and $P(Y_k =k+1) =1-(k+1)^{-2}$ for $k=0,1,\ldots$.
	For each $k\geq 1$, we have
		\begin{align}
			{\rm Var}[Y_k]= 1-\frac{1}{(k+1)^2}, \quad \textrm{and} \quad Q(Y_k, \delta) = \begin{cases}
							1-\frac{1}{(k+1)^2}, & 0< \delta<k+1 \\
							1, & \textrm{otherwise} 
							\end{cases}.
		\end{align}
	Therefore, if for some sequence $(\delta_k)$ of positive numbers we have
		\begin{align}
			\sum_{k=1}^\infty \delta_k^2 (1-Q(Y_k, \delta_k))=  \sum_{k=1}^\infty \frac{\delta^2_k}{(k+1)^2} \mathbf 1_{\{\delta_k < k+1\}} =\infty,
		\end{align}
	we infer that $\limsup (k^{-1/4} \delta_k ) = \infty$. In particular, $(\delta_k)$ cannot be bounded.
On the other hand, for the random power series $F(\omega ; z) := \sum_{k=0}^\infty Y_k(\omega) z^k$ we have the following:
		\begin{itemize}
			\item $r_F=1$ a.s.
			\item For almost every $\omega \in \Omega$ the realization $F(\omega; z)$ satisfies that
				\begin{align}
					\mathbb D \ni z\mapsto F(\omega ; z) - \frac{1}{(1-z)^2}
				\end{align}
			is a polynomial. In particular, a.s. the unit circle is {\it not} a natural boundary for $F$.
		\end{itemize}
		
		Indeed; by the 1st Borel-Cantelli Lemma we obtain that the events $E_k=\{Y_k=k+1\}$ satisfy $P(\limsup E_k^c)=0$. 
		Thus, for every $\omega \notin \limsup E_k^c$ there 
		exists $k_0 = k_0(\omega) \in \mathbb N$ such that $\omega \in E_k$ for all $k\geq k_0$. Hence, we find
		that $\limsup |Y_k(\omega)|^{1/k} = \limsup (k+1)^{1/k}=1$; this proves that $r_F=1$ a.s. Moreover, we may write
			\begin{align*}
				F(\omega ; z) - \sum_{k=0}^{k_0} Y_k(\omega) z^k &= \sum_{k=k_0+1}^\infty Y_k(\omega) z^k \\
											&= \sum_{k=0}^\infty (k+1)z^k - \sum_{k=1}^{k_0} (k+1)z^k \\
											& = \left( \frac{1}{1-z}\right)' - \sum_{k=1}^{k_0} (k+1)z^k,
			\end{align*}
		which proves that $F(\omega ; z) - \frac{1}{(1-z)^2}$ is a polynomial as claimed. \prend
		
\end{example}

\begin{remark} \label{rem:main-snb-2}
	We have already explained how condition \eqref{eq:cond-nb} (and hence, the condition \eqref{eq:cond-nb-2}) 
	extends the assumptions of Theorem \ref{thm:BS}. The next example
	illustrates the utility of \eqref{eq:cond-nb-2}, which can be viewed as a ``weighted'' version of \eqref{eq:cond-nb}. 
	
	E.g., consider an independent Poisson trial $(\delta_k)$ (a.k.a. independent Bernoulli r.v.s $\delta_k$ with mean $p_k\in (0,1)$), 
	and let $(c_k)$ be a bounded sequence of non-zero numbers such that $c_k^2\min\{p_k,1-p_k\} \to 0$ and $\sum c_k^2 \min\{p_k,1-p_k\} = \infty$. 
	Then, the random power series $\sum_k c_k \delta_k  z^k$ 
	has almost surely $\psi$-(SNB) at $\{z: |z|=1 \}$. Indeed; for the r.v.s ``$X_k = c_k \delta_k$'' and for ``$t_k = |c_k|/2$'' in Theorem \ref{thm:s-nat-bd}, we have
	``$Q(X_k ,t_k) = \max\{p_k,1-p_k\}$'' and thus 
		\[
			\sum_{k=0}^\infty t_k^2 \left( 1 - Q(X_k, t_k) \right) = \frac{1}{4} \sum_{k=0}^\infty c_k^2 \min\{p_k,1-p_k\} = \infty.
		\]
	On the other hand, notice that $|X_k| = |c_k| \leq M:= \sup_k |c_k| < \infty$ a.s. and ${\rm Var} [X_k] = c_k^2 p_k(1-p_k) \to 0$. 
\end{remark}

We conclude this section with an easy consequence about random power series with i.i.d. coefficients. 

\begin{corollary}
	Let $\{X_k\}_{k=0}^\infty$ be i.i.d. r.v.s. with non-degenerate distribution, and $\mathbb E [ \log^+|X_1| ]<\infty$. 
	Then, the random power series $\sum_{k=0}^\infty X_k z^k$ has a.s. $\psi$-(SNB) at $|z|=1$ for all test functions $\psi$.
\end{corollary}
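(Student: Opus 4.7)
The plan is to verify the two hypotheses of Theorem \ref{thm:s-nat-bd}, namely (i) $r_F = 1$ almost surely, and (ii) existence of a bounded anti-concentration sequence $(t_k)$ satisfying \eqref{eq:cond-nb-2}, and then invoke the theorem directly.

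For step (i), I would first show $r_F \geq 1$ a.s. using the moment assumption $\mathbb{E}[\log^+|X_1|] < \infty$. For every $\varepsilon > 0$, the tail bound
\begin{align*}
\sum_{k=1}^\infty P(|X_k| > (1+\varepsilon)^k) = \sum_{k=1}^\infty P(\log^+|X_1| > k\log(1+\varepsilon)) \leq \frac{\mathbb{E}[\log^+|X_1|]}{\log(1+\varepsilon)} < \infty
\end{align*}
combined with the first Borel--Cantelli lemma yields $\limsup_k |X_k|^{1/k} \leq 1+\varepsilon$ a.s. Letting $\varepsilon \downarrow 0$ along a sequence gives $r_F \geq 1$ a.s. The reverse inequality $r_F \leq 1$ a.s. is handled by non-degeneracy: since $X_1$ is not almost surely constant, there exists $t_0 > 0$ with $Q(X_1, t_0) < 1$ (otherwise, picking $v_n$ with $P(v_n \leq X_1 \leq v_n + 1/n) = 1$ and intersecting over $n$ would force $X_1$ to be degenerate). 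Then $\sum_k (1 - Q(X_k, t_0)) = \sum_k (1 - Q(X_1, t_0)) = \infty$, so Proposition \ref{prop:nat-bd} yields $r_F \leq 1$ a.s.

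For step (ii), the constant sequence $t_k \equiv t_0$ (with $t_0$ as just chosen) is bounded, and
\begin{align*}
\sum_{k=0}^\infty t_k^2 \bigl(1 - Q(X_k, t_k)\bigr) = t_0^2 \bigl(1 - Q(X_1, t_0)\bigr) \sum_{k=0}^\infty 1 = \infty,
\end{align*}
so \eqref{eq:cond-nb-2} holds. Applying Theorem \ref{thm:s-nat-bd} delivers $\psi$-(SNB) for every test function $\psi$, as claimed.

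There is no serious obstacle here; the only subtle point is recording that non-degeneracy of $X_1$ translates into $Q(X_1, t_0) < 1$ for some $t_0 > 0$, and then observing that the i.i.d. hypothesis turns \eqref{eq:cond-nb-2} into an elementary divergent series. Everything else is a direct citation of Proposition \ref{prop:nat-bd} and Theorem \ref{thm:s-nat-bd}.
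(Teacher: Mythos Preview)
Your proof is correct and follows essentially the same route as the paper: establish $r_F=1$ a.s., then feed the constant sequence $t_k\equiv t_0$ into Theorem~\ref{thm:s-nat-bd}. The only differences are cosmetic---the paper cites a reference for the equivalence $\mathbb E[\log^+|X_1|]<\infty \Leftrightarrow r_F=1$ a.s.\ rather than spelling out the Borel--Cantelli computation, and your argument that non-degeneracy forces $Q(X_1,t_0)<1$ for some $t_0>0$ is in fact cleaner than the paper's somewhat terse justification.
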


\noindent {\it Proof.} Since $X_1$ has non-degenerate distribution we get $P(|X_1|>0)>0$. This implies the equivalence 
	\[
		\mathbb E [ \log^+ |X_1|]<\infty \quad \Longleftrightarrow \quad r_F=1 \; \textrm{a.s.},
	\]
see e.g., \cite[Exercise 22.10]{Bil}. Since $P(|X_1|>0)>0$, there exists $\delta>0$ so that $p=P(|X_1| > \delta/2) >0$. It follows that $1-Q(X_k, \delta) \geq p$ 
for all $k$, hence \eqref{eq:cond-nb-2} is fulfilled with $t_k=\delta$. \prend

\subsection{The symmetric case} \label{SymmetricSection}

In this section we will use our main result (Theorem \ref{thm:s-nat-bd}) to settle the case of independent symmetric coefficients. 
The argument we follow is well known in the convergence of random series (see e.g., \cite[Th\'eor\`eme 7]{Mar-Zyg}) and rests on the probabilistic fact 
that if $\{X_n\}$ is a sequence of independent, symmetric r.v.s, and $\{\varepsilon_n\}$ a sequence of independent Rademacher
r.v.s, which are independent of $\{X_n\}$, then $\{X_n\}$ and $\{\varepsilon_n X_n\}$ are equidistributed. This permits us, after conditioning on 
$\{X_n\}$, to establish the desired property for a Rademacher sequence. This technique is customary referred to as the {\it reduction principle}, see \cite[p.8-9]{Kah}.

Before we study the case of Rademacher power series we mention a result of independent interest: We observe that
random power series with independent symmetric coefficients enjoy a spreading property that turns local boundary integrability to global.
The argument uses the decomposition scheme for random power series into ones with rotation symmetry, 
as in the proof of standard natural boundary (see \cite{Kah}.)

\begin{proposition}
	Let $\{X_k\}_{k=0}^\infty$ be independent and symmetric r.v.s and let $\psi$ be a test function which satisfies
	the sub-additivity condition
		\begin{align}
			\psi(t+s) \leq K[1 + \psi(t) + \psi(s)], \quad t,s \geq 0, \quad (K>0).
		\end{align} 
	Then, the random power series $F(\omega ; z) = \sum_{k=0}^\infty X_k(\omega) z^k$
	either a.s. belongs to $H^\psi$ or a.s. has $\psi$-(SNB).
\end{proposition}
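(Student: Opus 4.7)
The plan is to combine the zero-one dichotomy of Proposition \ref{prop:snb-0-1} with a single rotation-invariant decomposition of $F$, using the symmetry of the coefficients to transfer local $\psi$-integrability on one arc into global $\psi$-integrability on the whole circle. The first move is to invoke Proposition \ref{prop:snb-0-1}: if $F$ is not almost surely $\psi$-(SNB), there is a deterministic arc $I \subset (0,2\pi)$ such that $\sup_{0<r<1}\int_I \psi(|F(re^{i\theta})|)\,d\theta < \infty$ a.s. Then fix an integer $m$ with $2\pi/m \leq |I|$ and decompose $F = \sum_{j=0}^{m-1}G_j$, where $G_j(z) = \sum_k X_{mk+j}\,z^{mk+j}$. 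Each $G_j$ carries coefficients only at positions $\equiv j \pmod m$, so $|G_j(e^{2\pi i/m}z)| = |G_j(z)|$ and the function $\theta\mapsto |G_j(re^{i\theta})|$ has period $2\pi/m$.

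The next step is an extraction argument that converts the $\psi$-integrability of $|F|$ into $\psi$-integrability of each $|G_j|$. Independence and symmetry of $(X_k)$ make the $G_j$'s mutually independent and each $G_j \stackrel{d}{=} -G_j$, so for every sign vector $\varepsilon = (\varepsilon_0,\dots,\varepsilon_{m-1}) \in \{\pm 1\}^m$ the sign-flipped sum $F_\varepsilon := \sum_j \varepsilon_j G_j$ is equidistributed with $F$. The orthogonality $\sum_\varepsilon \varepsilon_j\varepsilon_{j'} = 2^m\delta_{jj'}$ gives the algebraic identity
\begin{align*}
G_j = \frac{1}{2^m}\sum_{\varepsilon\in\{\pm 1\}^m}\varepsilon_j F_\varepsilon,
\end{align*}
and combining it with the triangle inequality and iterated sub-additivity \eqref{eq:sub-ad} yields a pointwise bound $\psi(|G_j(z)|) \leq C[1 + \sum_\varepsilon \psi(|F_\varepsilon(z)|)]$ with $C = C(m,K)$. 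Integrating over $I$ and using that each of the $2^m$ events $\{\sup_r \int_I\psi(|F_\varepsilon(re^{i\theta})|)\,d\theta <\infty\}$ has probability one (they are equidistributed with the given event for $F$), we get $\sup_r\int_I\psi(|G_j(re^{i\theta})|)\,d\theta <\infty$ a.s. for every $j$.

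The last step exploits the $(2\pi/m)$-periodicity of $|G_j|$ to globalize: for any sub-arc $J \subset I$ of length $2\pi/m$,
\begin{align*}
\int_0^{2\pi}\psi(|G_j(re^{i\theta})|)\,d\theta = m\int_J \psi(|G_j(re^{i\theta})|)\,d\theta \leq m\int_I\psi(|G_j(re^{i\theta})|)\,d\theta,
\end{align*}
so $G_j \in H^\psi$ a.s. for each $j$, and a final application of sub-additivity \eqref{eq:sub-ad} to $F = \sum_j G_j$ delivers $F \in H^\psi$ a.s.

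The main obstacle is the extraction step of the second paragraph: transferring $\psi$-integrability from $|F|$ to each $|G_j|$ requires using all $2^m$ sign-flipped copies of $F$ simultaneously, and this is the only place where independence and symmetry of the $X_k$ are used essentially. A minor technicality is that every bound above is uniform in the radius $r$, so $\sup_{0<r<1}$ passes through each sub-additivity / triangle inequality without loss.
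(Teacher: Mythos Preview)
Your proof is correct and follows essentially the same route as the paper: invoke Proposition~\ref{prop:snb-0-1} to localize to a fixed arc, decompose $F$ into residue-class pieces $G_j$ (the paper's $z^j H_j$) whose moduli are $(2\pi/m)$-periodic, use symmetry to transfer local $\psi$-integrability to each $G_j$, globalize by periodicity, and reassemble via sub-additivity. The only difference is in the extraction step: the paper flips the sign of a single residue class at a time, obtaining $F - F_k = 2z^k H_k$ directly from just two equidistributed copies, whereas you average over all $2^m$ sign vectors --- your formula works but is less economical.
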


\noindent {\it Proof.} For concreteness we prove the probabilistic dichotomy for the $N$ space, i.e., for $\psi(t) = \log^+t$; 
the general case is treated similarly. 
If $F$ does {\it not} have $N$-natural boundary a.s., then Proposition \ref{prop:snb-0-1} yields the existence of an arc $I\subset (0,2\pi)$ so that for almost
every $\omega$ we have
	\begin{align} \label{eq:N-sym}			
		M(\omega):=\sup_{r} \int_I \log^+ |F(\omega ; re^{i \theta})| \, d\theta <\infty.
	\end{align}
Let $\ell\in \mathbb N$ so that $2\pi /\ell < |I|/2$. Then, there exists $s\in \{1,\ldots,\ell\}$ so that $J_s= [\frac{2\pi}{\ell}(s-1), \frac{2\pi}{\ell} s) \subset I$.
For $k=0,1,\ldots, \ell-1$ we introduce the sequences of signs $(\varepsilon_{k,j})_{j=0}^\infty$ defined by
	\begin{align}
		\varepsilon_{k,j} = \begin{cases}
							1, & j\nequiv k \mod \ell \\
							-1, & j \equiv k \mod \ell
						\end{cases}.
	\end{align}
Let $F_k (z) = \sum_{j=0}^\infty  \varepsilon_{k,j}X_j z^j$. Because of the symmetry of $X_k$'s the random power series $F$ and $F_k$ are equi-measurable,
hence for almost every $\omega$ the function $F_k$ satisfies \eqref{eq:N-sym}, too. It follows that for almost every $\omega$ we have
	\begin{align}	
		\sup_r \int_I \log^+|F(\omega ; r e^{i\theta}) - F_k(\omega ; re^{i\theta})| \, d\theta \leq |I| \log 2 + 2M(\omega) < \infty,
	\end{align}
where we have used \eqref{fct:tria-log+} for $m=2$. On the other hand, we have 
	\begin{align} \label{eq:N-sym-1}
		F(z) - F_k(z) = 2\sum_{j=0}^\infty X_{j\ell+k}z^{j\ell+k} = 2z^kH_k(z), \quad H_k(z):= \sum_{j=0}^\infty X_{j \ell+k} z^{j \ell}.
	\end{align}
For the $H_k$'s we have the following properties: 
	\begin{itemize}
		\item Each $H_k$ is invariant under rotation by angle $2\pi/\ell$, i.e. $H_k(ze^{2\pi i/\ell})=H_k(z)$, $z\in \mathbb C$. 
		\item Because of the rotation invariance we derive
			\begin{align} \label{eq:N-sym-2}
				\int_{J_s} \log ^+ |r^kH_k( re^{i\theta})| \, d\theta = \int_{J_t} \log ^+ |r^kH_k( re^{i\theta})| \, d\theta,	
			\end{align}
			for $r>0$, and any $t,s \in \{1,\ldots, \ell\}$, where $J_s = \left[ \frac{2\pi}{\ell}(s-1), \frac{2\pi}{\ell}s \right)$.
		\item The $H_k$'s form a decomposition of $F$ as follows 
			\begin{align}
				F(z) = \sum_{k=0}^{\ell-1} z^k H_k(z).
			\end{align}
	\end{itemize}
Combining \eqref{eq:N-sym}, \eqref{eq:N-sym-1}, and $\eqref{eq:N-sym-2}$ we find for almost all $\omega$ that
	\begin{align}
		\sup_r \int_0^{2\pi} \log^+ |r^k H_k(\omega; r e^{i\theta})| \, d\theta \leq \ell \left( |I| \log 2+ 2M(\omega) \right) < \infty.
	\end{align}
Using the decomposition property of the $H_k$'s, the ``triangle inequality'' \eqref{fct:tria-log+} for $\log^+$, and the latter estimate 
we obtain for almost every $\omega$ that 
	\begin{align}\begin{split}
		\sup_r \int_0^{2\pi} \log^+ |F(\omega; r e^{i\theta})| \, d\theta & \leq 2\pi \log \ell + \sum_{k=0}^{\ell-1} \sup_r \int_ 0^{2\pi}\log^+ |r^k H_k(\omega; r e^{i\theta})| \, d\theta \\
					& \leq 2\pi \log \ell + \ell^2 \left( |I| \log 2 + 2M(\omega) \right) <\infty.
		\end{split}
	\end{align}
This proves that $F$ is in $N$-space a.s. \prend

\begin{proposition} [Rademacher power series] \label{prop:Rad-snb}
	Let $\{\varepsilon_k\}_{k=0}^\infty$ be independent Rademacher r.v.s and let $(c_k)$ be a deterministic sequence of numbers so that 
	$\limsup |c_k|^{1/k}=1$. Then, for the random power series $F(z) = \sum_{k=0}^\infty c_k \varepsilon_k z^k$ we have the following:
		\begin{itemize}
			\item If $\sum_{k=0}^\infty |c_k|^2<\infty$, then $F\in H^p(\mathbb D)$ for all $0<p<\infty$ a.s. 
			
			\item If $\sum_{k=0}^\infty |c_k|^2 = \infty$, then for any test function $\psi $, $F$ has a.s. the circle $|z|=1$ as $\psi$-(SNB).
		\end{itemize}
\end{proposition}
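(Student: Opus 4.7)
The plan is to treat the two cases separately. For $\sum_{k=0}^\infty|c_k|^2<\infty$, I would combine Khintchine's inequality with the subharmonicity of $|F|^p$; for the divergent case, I would invoke Theorem~\ref{thm:s-nat-bd} with a carefully truncated sequence of anti-concentration levels.

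Treating the convergent case first: fix $p\in(0,\infty)$. Applying Khintchine to the Rademacher sum $F(re^{i\theta})=\sum_k (c_k r^k e^{ik\theta})\varepsilon_k$ (with complex coefficients, handled by splitting into real and imaginary parts and using Minkowski) gives
\[
\mathbb{E}|F(re^{i\theta})|^p \leq K_p\Bigl(\sum_k|c_k|^2r^{2k}\Bigr)^{p/2}\leq K_p\Bigl(\sum_k|c_k|^2\Bigr)^{p/2}.
\]
Integrating in $\theta$ via Fubini and noting that $r\mapsto \int_0^{2\pi}|F(re^{i\theta})|^p\, d\theta$ is increasing in $r$ by subharmonicity of $|F|^p$, monotone convergence yields $\mathbb{E}\|F\|_{H^p}^p<\infty$, so $F\in H^p$ a.s. A countable intersection over $p_n\uparrow\infty$ then forces $F\in\bigcap_{0<p<\infty}H^p$ almost surely.

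For $\sum_{k=0}^\infty|c_k|^2=\infty$, I would verify the hypotheses of Theorem~\ref{thm:s-nat-bd} for $X_k:=c_k\varepsilon_k$. The identity $|X_k|=|c_k|$ together with $\limsup|c_k|^{1/k}=1$ gives $r_F=1$ deterministically by Cauchy--Hadamard. Since $X_k$ assumes only the two values $\pm c_k$, any interval of length $t\leq|c_k|$ can contain at most one of them, so $Q(X_k,t)=\frac{1}{2}$ in that regime. Setting $t_k:=\min\{|c_k|,1\}$ produces a bounded sequence with $1-Q(X_k,t_k)=\frac{1}{2}$, so
\[
\sum_{k=0}^\infty t_k^2\bigl(1-Q(X_k,t_k)\bigr)=\frac{1}{2}\sum_{k=0}^\infty\min\{|c_k|^2,1\},
\]
which diverges: either infinitely many $|c_k|\geq 1$ make the bound trivial, or else eventually $|c_k|<1$ and the tail matches the divergent tail of $\sum|c_k|^2$. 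Theorem~\ref{thm:s-nat-bd} then delivers $\psi$-(SNB) at $|z|=1$ a.s. for every test function~$\psi$.

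The principal subtlety is the boundedness constraint on $(t_k)$ in Theorem~\ref{thm:s-nat-bd}, which is what forces the truncation at $1$: the hypothesis $\limsup|c_k|^{1/k}=1$ permits $(c_k)$ to be unbounded (cf.\ Example~\ref{ex:nec}), so the naive choice $t_k=|c_k|$ would be illegal. Aside from this mild adjustment, both halves reduce directly to results already on hand.
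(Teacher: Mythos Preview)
Your proposal is correct and follows essentially the same route as the paper: Khintchine's inequality to bound $\mathbb{E}\|F\|_{H^p}^p$ in the convergent case, and Theorem~\ref{thm:s-nat-bd} applied to $X_k=c_k\varepsilon_k$ with the truncated levels $t_k=\min\{|c_k|,1\}$ in the divergent case. The only cosmetic differences are that the paper invokes Fatou's lemma (rather than monotone convergence via subharmonicity) to pass to the supremum over $r$, and concludes the first item by appealing to $B_p\lesssim\sqrt{p}$ and the first Borel--Cantelli lemma rather than a direct countable intersection over $p_n\uparrow\infty$; both variants are standard and interchangeable.
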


\noindent {\it Proof.} The assumption clearly implies that $r_F=1$ a.s. 
The first item is well known. Since the proof is short we sketch it for reader's convenience. For every fixed $1\leq p<\infty$ and $0<r<1$, 
Khintchine's inequality \cite[Exercise 2.6.5.]{Ver-book} implies
	\begin{align}
		\mathbb E \left [ \fint_0^{2\pi} |F(re^{i\theta})|^p \, d\theta \right] \leq B_p^p\left ( \sum_{k = 0}^\infty |c_k|^2 r^{2k} \right)^{p/2} \leq B_p^p \|c\|_2^{p}. 
	\end{align}
Note that $\|F\|_{H^p}^p:=\sup_{0<r<1} \fint_0^{2\pi} |F(re^{i\theta})|^p \, d\theta = \lim_{r\uparrow 1} \fint_0^{2\pi} |F(re^{i\theta})|^p \, d\theta$, thus Fatou's lemma yields
	\begin{align}
		\mathbb E [\|F\|_{H^p}^p] \leq \lim_{r\uparrow 1} \mathbb E\left[ \fint_0^{2\pi} |F(re^{i\theta})|^p \, d\theta \right ] \leq B_p^p \|c\|_2^{p}.
	\end{align}
The claim now easily follows if take into account the fact that $B_p \lesssim \sqrt{p}$ for $p\geq 1$ and the 1st Borel-Cantelli lemma. 

For the second assertion we employ Theorem \ref{thm:s-nat-bd} for ``$X_k= c_k \varepsilon_k$'' and ``$t_k = \min\{1, |c_k|\}$''. Indeed; 
$0\leq t_k\leq 1$, $t_k=0$ iff $c_k=0$, and 
	\begin{align*}
		\sum_{k=0}^\infty t_k^2 [1- Q(X_k, t_k)]= \frac{1}{2} \sum_{k=0}^\infty t_k^2.
	\end{align*}
To conclude notice that, since $\sum_{k=0}^\infty |c_k|^2 = \infty$, we also have\footnote{Let $J=\{j : |c_j|>1\}$. We distinguish two cases: If $J$ is infinite
then $\sum_k t_k^2 \geq \sum_{j\in J} t_j^2 = \sum_{j\in J} 1=\infty$. If $J$ is finite, then $\sum_k t_k^2\geq \sum_{j\notin J} t_j^2 = \sum_{j\notin J}|c_j|^2=\infty$.} 
$\sum_{k=0}^\infty t_k^2=\infty$. \prend

\begin{theorem} \label{thm:snb-sym}
	Let $\{X_k\}_{k=0}^\infty$ be a sequence of independent and symmetric random variables and assume that the random
	power series $F(z) = \sum_{k=0}^\infty X_k z^k$ has $r_F=1$ a.s. We have the following:
		\begin{itemize}
			\item If $\sum_{k=0}^\infty |X_k|^2 < \infty$ a.s., then $F\in H^p(\mathbb D)$ for all $0<p<\infty$ a.s.	
			
			\item If $\sum_{k=0}^\infty |X_k|^2 = \infty$ a.s., then for any test function $\psi$, $F$ has the circle $|z|=1$ as $\psi$-(SNB) a.s.
		\end{itemize}
\end{theorem}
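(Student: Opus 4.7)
The plan is to reduce Theorem \ref{thm:snb-sym} to the Rademacher case handled in Proposition \ref{prop:Rad-snb} via the classical reduction principle exploiting the symmetry of the $X_k$'s. I would first enlarge the probability space to the product $(\Omega\times \Omega', \Sigma \otimes \Sigma', P\otimes P')$, where $(\Omega',\Sigma',P')$ carries a sequence $\{\varepsilon_k\}_{k=0}^\infty$ of independent Rademacher r.v.s, independent of $\{X_k\}$. Set
\[
G(\omega,\omega';z):=\sum_{k=0}^\infty \varepsilon_k(\omega') X_k(\omega)\, z^k.
\]
By the symmetry of each $X_k$ and the independence of the two sequences, $(X_k)$ and $(\varepsilon_kX_k)$ are identically distributed as sequences, so $F\stackrel{d}{=} G$ as random elements of the space $\mathcal{O}(\mathbb{D})$ of analytic functions. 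Since $|\varepsilon_kX_k|=|X_k|$, the radius of convergence is preserved: $r_G=1$ $(P\otimes P')$-a.s.

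Next I would condition on $\omega \in \Omega$ and apply Fubini. For $P$-a.e.\ fixed $\omega$, the map $\omega' \mapsto G(\omega,\omega';\,\cdot\,)$ is a Rademacher power series with deterministic coefficients $c_k:=X_k(\omega)$ satisfying $\limsup_k|c_k|^{1/k}=1$, so Proposition \ref{prop:Rad-snb} is directly applicable. In Case~1, the hypothesis $\sum |X_k|^2<\infty$ $P$-a.s.\ says that for $P$-a.e.\ $\omega$ the coefficient sequence $(c_k)=(X_k(\omega))$ is square-summable, hence by the first item of Proposition \ref{prop:Rad-snb} we get $G(\omega,\,\cdot\,;\,\cdot\,)\in H^p(\mathbb{D})$ for every $0<p<\infty$, $P'$-a.s. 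Fubini then yields $G\in H^p$ for all $p$, $(P\otimes P')$-a.s.; the distributional identity $F\stackrel{d}{=} G$ transfers this to $F$. In Case~2, the hypothesis $\sum|X_k|^2=\infty$ $P$-a.s.\ together with the second item of Proposition \ref{prop:Rad-snb} gives that $G(\omega,\,\cdot\,;\,\cdot\,)$ has $\psi$-(SNB) at $|z|=1$ for every test function $\psi$, $P'$-a.s.; Fubini and $F\stackrel{d}{=} G$ conclude.

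The only technical issue is a measurability/bookkeeping check so that $F\stackrel{d}{=}G$ actually transfers the relevant events. Both properties are measurable when rewritten using only countable data: $\{f\in H^p\}$ is
\[
\Big\{\sup_{r\in \mathbb{Q}\cap(0,1)}\int_0^{2\pi} |f(re^{i\theta})|^p\, d\theta<\infty\Big\},
\]
and $\psi$-(SNB) is the countable intersection, over a countable base of arcs $I\subset(0,2\pi)$, of the events $\{\sup_{r\in \mathbb{Q}\cap(0,1)}\int_I \psi(|f(re^{i\theta})|)\, d\theta=\infty\}$ (cf.\ Fact \ref{fct:2-1} and Proposition \ref{prop:snb-0-1}). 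These are Borel in $\mathcal{O}(\mathbb{D})$, so the pushforward equality of $F$ and $G$ suffices.

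The main obstacle is essentially notational and consists in organizing the Fubini step carefully; no substantive new difficulty arises beyond the ingredients already in place. I expect the proof to fit in a few lines once Proposition \ref{prop:Rad-snb} is invoked with $c_k=X_k(\omega)$.
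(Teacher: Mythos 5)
Your proposal is correct and follows the same route as the paper's proof: both use the reduction principle (replacing $X_k$ by $\varepsilon_k X_k$ on a product space, exploiting symmetry to get equimeasurability of $F$ and $G$), then condition on $\omega$ and apply Fubini so that Proposition \ref{prop:Rad-snb} does the work slice by slice. The paper writes out only the $\psi$-(SNB) case and leaves the $H^p$ case to the reader, whereas you spell out both and the measurability bookkeeping explicitly, but there is no substantive difference.
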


\noindent{\it Proof.} We shall use independent Rademacher random variables $\{\varepsilon_k\}_{k=0}^\infty$ that are also independent of $\{X_k\}_{k=0}^\infty$. 
For this, we may take  $\{\varepsilon_k\}_{k=0}^\infty$ on $((0,1), {\cal B}, m)$ to be $\varepsilon_k(t) = {\rm sgn} (\sin (2^k\pi t))$ 
and then each $\varepsilon_k X_k$ to be on $((0,1) \times \Omega, m\times P)$.
Recall then that, as the $X_k$ are symmetric, the distribution
of $\{\varepsilon_k X_k\}_{k=0}^\infty$ is the same as that of $\{X_k\}_{k=0}^\infty$. A fortiori, the random functions $F(z) = \sum_{k=0}^\infty X_k z^k$ 
and $H(z)= \sum_{k = 0}^\infty \varepsilon_k X_k z^k$ are equimeasurable. 

We prove now the second conclusion of the Theorem (and work similarly for the first conclusion).
To this end, let $D= \{\omega\in \Omega : \sum_{k=0}^\infty |X_k(\omega)|^2 = \infty\}$, 
let $A$ be the event on $(\Omega, P)$ that $F$ has $|z|=1$ as $\psi$-(SNB), and let $B$ be the event on $((0,1) \times \Omega, m\times P)$ 
that $H$ has $|z|=1$ as $\psi$-(SNB). Then the equimeasurability of $F$ and $H$, Tonelli-Fubini's theorem, and the fact that $D$ is a sure event yield
	\begin{align}
		P(A) = (m\times P)(B) = \int_\Omega m(B^\omega) \, dP(\omega) = \int_D m(B^\omega) \, dP(\omega).
	\end{align}
Finally, note that for every fixed $\omega\in D$ the Rademacher random series $\sum_{k = 0}^\infty X_k(\omega) \varepsilon_k z^k$ satisfies the condition (ii) in 
Proposition \ref{prop:Rad-snb}. That is, $m(B^\omega)=1$. 

\prend

\subsection{Bounded random variables}  \label{S:bd-case}

The purpose of this section is to provide 
criteria for the presence of a strong natural boundary for $F(z) = \sum_{k=0}^\infty X_k z^k$ with $\{X_k\}_{k=0}^\infty$ uniformly bounded. 
For such a power series, we show that strong natural boundaries exist whenever the series of variances diverges by replacing the estimate \eqref{claim:main-sb}.
I.e.\ under the assumption of uniform boundedness we establish strong natural boundaries for a larger class than the class of power series satisfying 
the weighted anti-concentration condition \eqref{eq:cond-nb-2}. 
In particular, we revisit the variance condition in \ref{thm:BS} and show how both the assumption
and the conclusion of this theorem can be improved. 

For motivation, note the example in Remark \ref{rem:main-snb-2} which satisfies the weighted anti-concentration condition \eqref{eq:cond-nb-2}, and
uses uniformly bounded $X_k$'s with Var$[X_k]\to 0$. 
On the other hand, $X_k$'s satisfying the conditions from Theorem \ref{thm:BS} (uniform boundedness and Var$[X_k]\nrightarrow 0$) 
also satisfy the weighted anti-concentration condition thanks to \eqref{from_var_to_wac}. Therefore, 
the weighted anti-concentration condition \eqref{eq:cond-nb-2} is genuinely weaker than the conditions of Theorem \ref{thm:BS}. 

At the same time, condition \eqref{eq:cond-nb-2} implies, via \eqref{eq:f-2}, that $\sum_{k=0}^\infty {\rm Var}[X_k] =\infty$. 
Then when trying to clarify the existence of strong natural boundaries, and in view of our Theorem \ref{thm:s-nat-bd}, the following question suggests itself:	
	\begin{quote} 
	Is it true that if $\{\xi_n\}$ are random variables with $\mathbb E |\xi_n|^2<\infty$, the following equivalence
		\begin{align} \label{eq:var-anti}
			\sum_{n=1}^\infty {\rm Var}[\xi_n] = \infty \quad \stackrel{?}\Longleftrightarrow 
			\quad 
			\sum_{n=0}^\infty t_n^2 \left( 1 - Q(\xi_n, t_n) \right) \quad \textrm{for bounded $(t_n)$}
		\end{align}
	holds? What if $\{ \xi_n \}$ is uniformly bounded? 
	\end{quote}
	Example \ref{ex:nec} shows that the answer is clearly negative without assuming uniform boundedness. In the presence of uniform boundedness, one may hope
for a reversal of \eqref{eq:f-2}. The following (optimal) estimate shows to what extend this reversal is possible and, at the same time, suggests Example \ref{ex:vwa-opt} 
below that answers \label{eq:var-anti} in the negative even for uniformly bounded coefficients. 

\begin{fact} \label{fct:rev-wL2}
	Let $X$ be a r.v. in $L_\infty(\Omega, {\mathcal E}, P)$. Then, we have
		\begin{align} \label{eq:rev-wL2}
			{\rm Var}[X] \lesssim \log\left (\frac{3\|X\|_\infty }{\sqrt{{\rm Var}[X]}} \right) \cdot \sup_{\delta >0} \left\{ \delta^2[1-Q(X,\delta) ]\right\} .
		\end{align}
\end{fact}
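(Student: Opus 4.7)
The strategy is to extract from the hypothesis a single \emph{common centre} $v_{0}$ around which $X$ concentrates at every scale, derive from it a polynomial tail bound for $|X-v_{0}|$, integrate via layer cake, and then convert the resulting $\log(M/\sqrt{K})$ to the desired $\log(3M/\sigma)$. Throughout, write $\sigma^{2}:=\mathrm{Var}[X]$, $M:=\|X\|_{\infty}$, and $K:=\sup_{\delta>0}\delta^{2}[1-Q(X,\delta)]$; the trivial case $K>M^{2}$ is immediate from $\sigma^{2}\leq M^{2}< K\leq K\log(3M/\sigma)$, so I assume $K\leq M^{2}$.

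For each $\delta>0$ pick an interval $I_{\delta}=[v_{\delta}-\delta/2,v_{\delta}+\delta/2]$ with $P(X\in I_{\delta})\geq 1-K/\delta^{2}-\eta$ for arbitrarily small $\eta>0$. At the base scale $\delta_{0}:=\sqrt{2K}$, every $I_{\delta}$ with $\delta\geq \delta_{0}$ carries mass exceeding $1/2-\eta$, so any two such intervals must intersect; this forces $|v_{\delta}-v_{\delta_{0}}|\leq (\delta+\delta_{0})/2$, and hence $I_{\delta}\subset [v_{0}-3\delta/2,\,v_{0}+3\delta/2]$ with $v_{0}:=v_{\delta_{0}}$, for every $\delta\geq \delta_{0}$. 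Reparametrising $s=3\delta/2$ yields the tail bound
$$ P(|X-v_{0}|>s) \leq \frac{9K}{4s^{2}}, \qquad s\geq s_{0}:=3\delta_{0}/2. $$
Since $\sigma^{2}\leq \mathbb{E}[(X-v_{0})^{2}]=\int_{0}^{\infty}2s\,P(|X-v_{0}|>s)\,ds$ and $|X-v_{0}|\lesssim M$ (because $|v_{0}|\leq M+\delta_{0}/2$), I split the integral at $s_{0}$, using the trivial bound $1$ below and the tail bound above, to get
$$ \sigma^{2}\leq s_{0}^{2}+\frac{9K}{2}\log\frac{3M}{s_{0}} \;\lesssim\; K\Bigl[1+\log\frac{M}{\sqrt{K}}\Bigr]. $$

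To upgrade $\log(M/\sqrt{K})$ to $\log(3M/\sigma)$, set $u:=\sigma^{2}/K$; the previous line reads $u\leq C[1+\log(M/\sigma)+\tfrac{1}{2}\log u]$. The companion bound $K\leq 4\sigma^{2}$, obtained by taking the supremum in \eqref{eq:f-2}, closes the loop: for $u$ bounded by an absolute constant the conclusion is immediate from $\sigma\leq M$ (so $\log(3M/\sigma)\geq \log 3>1$); for $u$ large the term $\tfrac{1}{2}\log u$ is absorbed into $u/2$, giving $u\lesssim \log(3M/\sigma)$, as required. The main obstacle is the initial common-centre construction: the L\'evy intervals $I_{\delta}$ at different scales are \emph{a priori} uncoordinated, and using them separately would only yield the weaker polynomial bound $\sigma^{2}\lesssim M\sqrt{K}$ of Fact-type flavour. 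The half-mass intersection argument is precisely what promotes this to a logarithmic penalty in $M/\sigma$; uniform boundedness enters only to truncate the layer-cake integral at $s\asymp M$.
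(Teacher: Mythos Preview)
Your argument is correct and takes a genuinely different, more elementary route than the paper's. The paper (Appendix~A) first proves the inequality in the broader sub-gaussian class (Proposition~\ref{prop:rev-wL2-sg}) and then specialises via $\|X\|_{\psi_2}\lesssim \|X\|_\infty$. There the centring point is the \emph{median}, the tail equivalence $\sup_\delta \delta^2[1-Q(X,\delta)]\asymp \|X-{\rm med}(X)\|_{2,\infty}^2$ is isolated as a separate lemma (Lemma~\ref{lem:Levy-wL2}) proved via weak symmetrisation, and the layer-cake integral is handled by the interpolation $P(|Y|>t)=P^{1-\lambda}\cdot P^\lambda$ (the factor $P^\lambda$ being dominated by the sub-gaussian tail) followed by a Gamma-function computation and optimisation over $\lambda$. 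Your half-mass intersection construction of $v_0$ is a direct substitute for the median/weak-symmetrisation step, and your hard cutoff of the layer-cake at $s\asymp M$ replaces the interpolation---precisely the simplification that uniform boundedness affords. The final bootstrap from $\log(M/\sqrt K)$ to $\log(M/\sigma)$ is essentially the same in both proofs. What the paper's route buys is the sub-gaussian generality and a clean identification of $K$ with a weak-$L^2$ norm; what yours buys is that it avoids Orlicz norms, weak symmetrisation, and the $\Gamma$-calculus entirely.

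One small repair: with $\delta_0=\sqrt{2K}$ the intervals $I_\delta$ for $\delta\geq\delta_0$ carry mass only $\geq 1/2-\eta$, and two such intervals need \emph{not} intersect (the masses sum to $1-2\eta$). Taking instead $\delta_0=2\sqrt{K}$, so that each $I_\delta$ has mass $\geq 3/4-\eta$, fixes this at the cost of an absolute constant absorbed by the~$\lesssim$.
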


We defer the proof of this result in the Appendix.

\begin{example} \label{ex:vwa-opt} 
	Let $\Omega =(-1/2,1/2)$ be equipped with the Borel $\sigma$-field and the standard Lebesgue measure as the underlying probability space. 
	We consider the mono-parametric family of r.v.s $\{X_\alpha\}$ on $\Omega$ defined by
		\begin{align}
			X_\alpha (\omega) = {\rm sgn}(\omega) \sqrt{\frac{\alpha}{|\omega|} } \mathbf 1_{(-\alpha, \alpha)^c}(\omega), \quad \omega \in \Omega, \quad (0<\alpha<1/2). 
		\end{align}
	The desired sequence $\{\xi_k\}$ will be built out of $\{X_\alpha\}$ for an appropriate choice of a sequence $\alpha=\alpha_k$. First, note that $X_\alpha$'s satisfy the following:
		\begin{enumerate}
			\item $\|X_\alpha\|_{L^\infty}=1$ for all $\alpha\in (0, 1/2)$.
			\item $X_\alpha$ is symmetric, and ${\rm Var}[X_\alpha]=\mathbb E[X_\alpha^2] = 2 \alpha \int_{\alpha}^{1/2} \frac{d\omega}{\omega} = -2\alpha \log(2\alpha)$.
			\item For $\delta>0$ we have
				\begin{align}
					P(|X_\alpha|>\delta)  = \begin{cases}
											1-2\alpha, & \delta \leq \sqrt{2\alpha} \\
											 2\alpha \left(\delta^{-2}-1\right), & \sqrt{2\alpha} < \delta <1 \\
											 0, & \delta\geq 1
										\end{cases}.
				\end{align}
			\item Therefore, we obtain
				\begin{align}
					\delta^2 [1-Q(X_\alpha, \delta)] \leq \delta^2 P(|X_\alpha| > \delta/2) \leq 8\alpha.
				\end{align}
		\end{enumerate}
Now we are ready to define the r.v.s $\{\xi_k\}$ refuting equivalence \eqref{eq:var-anti}: Let $(\alpha_k)$ 
be the sequence of positive numbers defined by $\alpha_k^{-1}= 3k[\log(ek)]^2$ 
for $k=1,2,\ldots$ and let $\xi_k:= X_{\alpha_k}$. Clearly, $\|\xi_k\|_{L^\infty}=1$, and
	\begin{align}
		{\rm Var}[\xi_k] = -2\alpha_k \log(2\alpha_k) \asymp \frac{1}{k[\log(ek )]}, \quad 
				\sup_{\delta>0} \{ \delta^2 [1-Q(\xi_k, \delta)]\} \leq 8\alpha_k \asymp \frac{1}{k[\log(ek)]^2},
	\end{align}
for $k=1,2,\ldots$.		
\end{example}

Given that equivalence \eqref{eq:var-anti} is false the following problem arise naturally:

\begin{quote} \label{Q:prob-2}
	Let $\{X_n\}_{n=0}^\infty$ be a sequence of independent random variables with $\mathbb E|X_n|^2<\infty$ for each $n$ and suppose that the random power series 
	$F(z) = \sum_{n=0}^\infty X_n z^n$ has radius of convergence $r_F=1$. Is it true that under the following condition
		\begin{align} \label{eq:cond-var}
			\sum_{n=0}^\infty {\rm Var}[X_n] =\infty,
		\end{align}
	the function $F$ has the unit circle as (SNB)? What if $\{X_n\}_{n=0}^\infty$ is uniformly bounded?
\end{quote}

Again, Example \ref{ex:nec} shows that the answer is negative if we do not assume uniform boundedness. We affirm this question when $\{X_n\}_{n=0}^\infty$ is uniformly
bounded:

\begin{theorem} \label{thm:snb-be}
	Let $\{X_n\}_{n=0}^\infty$ be a sequence of independent r.v.s with $|X_n| \leq M$ a.s. for all $n$ and $\sum_{n=0}^\infty {\rm Var}[X_n] = \infty$. 
	Then, the random power series $F(z) = \sum_{n=0}^\infty X_n z^n$ satisfies the following:
		\begin{itemize}
			\item $r_F=1$ a.s., and 
			\item for any test function $\psi$, $F$ has $|z|=1$ as $\psi$-(SNB) a.s.
		\end{itemize}
\end{theorem}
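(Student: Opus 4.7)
The plan is to mimic the proof of Theorem \ref{thm:s-nat-bd}, substituting the Berry--Esseen estimate (Lemma \ref{lem:BE}) for Rogozin's inequality in the key small-ball bound, and to handle the radius of convergence separately via a Paley--Zygmund argument.

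\medskip

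\textbf{Step 1 ($r_F=1$ a.s.).} The bound $r_F\geq 1$ is immediate from $|X_n|\leq M$. For the reverse, fix $\rho<1$ and set $J_\rho=\{n:\mathbb{E}X_n^2>2\rho^{2n}\}$. Since $\sum_{n\notin J_\rho}\mathbb{E}X_n^2\leq 2\sum_n \rho^{2n}<\infty$ while $\sum_n \mathbb{E}X_n^2\geq \sum_n\mathrm{Var}[X_n]=\infty$, the set $J_\rho$ is infinite with $\sum_{n\in J_\rho}\mathbb{E}X_n^2=\infty$. Paley--Zygmund applied to the nonnegative r.v.\ $X_n^2$ together with $\mathbb{E}X_n^4\leq M^2\mathbb{E}X_n^2$ yields, for $n\in J_\rho$,
\[
P(|X_n|>\rho^n)\;\geq\; P\bigl(X_n^2\geq \tfrac{1}{2}\mathbb{E}X_n^2\bigr)\;\geq\; \frac{(\mathbb{E}X_n^2)^2}{4\,\mathbb{E}X_n^4}\;\geq\; \frac{\mathbb{E}X_n^2}{4M^2}.
\]
Summing over $J_\rho$ diverges, so the second Borel--Cantelli lemma (using independence) gives $|X_n|>\rho^n$ infinitely often a.s. Thus $\limsup_n|X_n|^{1/n}\geq \rho$ a.s., and letting $\rho\uparrow 1$ yields $r_F=1$ a.s.

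\medskip

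\textbf{Step 2 (Anti-concentration via Berry--Esseen).} Write $\sigma_k^2=\mathrm{Var}[X_k]$ and set $V(r):=\sum_k \sigma_k^2 r^{2k}$ with partial sums $V_N(r)$. Then $V(r)<\infty$ for $r<1$ while $V(r)\to\infty$ as $r\uparrow 1$. The analogue of Claim \ref{claim:main-sb} to establish is the pointwise small-ball bound
\[
P\bigl(|F(re^{i\theta})|<t\bigr)\;\leq\; \frac{C(t+M)}{\sqrt{V(r)}},\qquad t\geq 0,\ \theta\in I^\ast:=I\setminus\Theta.
\]
To prove it, fix $\theta\in I^\ast$ and $N$ large; as in the proof of Theorem \ref{thm:s-nat-bd}, at least one of $\sum_k \sigma_k^2 r^{2k}\cos^2(k\theta)$ or its sine counterpart is $\geq V_N(r)/2$. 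Assuming the former case, put
\[
S_N(\theta):=\sum_{k=0}^N (X_k-\mathbb{E}X_k)\,r^k\cos(k\theta),
\]
a sum of independent mean-zero r.v.s, each bounded by $2M$, with variance $V_N(r,\theta)\geq V_N(r)/2$. Normalize $Y_k:=(X_k-\mathbb{E}X_k)/\sigma_k$ (dropping any $k$ with $\sigma_k=0$) and apply Lemma \ref{lem:BE} with unit vector $\theta_k:=\sigma_k r^k\cos(k\theta)/\sqrt{V_N(r,\theta)}$. Using $\mathbb{E}|Y_k|^3\leq 2M/\sigma_k$ and $r^k|\cos(k\theta)|\leq 1$, the Berry--Esseen error reduces to
\[
C\sum_k |\theta_k|^3\,\mathbb{E}|Y_k|^3\;\leq\; \frac{2CM}{V_N(r,\theta)^{3/2}}\sum_k \sigma_k^2 r^{2k}\cos^2(k\theta)\;=\;\frac{2CM}{\sqrt{V_N(r,\theta)}}.
\]
Combined with the uniform $1/\sqrt{2\pi}$ bound on the Gaussian density, any interval of length $2t$ catches $S_N(\theta)$ with probability at most $C'(t+M)/\sqrt{V_N(r)}$. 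Since
\[
|F_N(re^{i\theta})|\;\geq\;\bigl|\mathrm{Re}\,F_N(re^{i\theta})\bigr|\;=\;\Bigl|S_N(\theta)+\sum_{k=0}^N\mathbb{E}[X_k]\,r^k\cos(k\theta)\Bigr|,
\]
the event $\{|F_N(re^{i\theta})|<t\}$ pins $S_N(\theta)$ inside one such interval, giving the bound at level $N$. Fatou's lemma and a.s.\ uniform convergence of $F_N\to F$ on $|z|\leq r$ pass the estimate to $F$.

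\medskip

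\textbf{Step 3 (Conclusion).} From the pointwise bound, the monotonicity of $\psi$, and Lemma \ref{lem:mix-sb} applied with $(Y,\nu)=I^\ast$ equipped with normalized Lebesgue measure, one obtains, for $Y_{I,r}:=\fint_I \psi(|F(re^{i\theta})|)\,d\theta$,
\[
P\bigl(Y_{I,r}<\psi(t)\bigr)\;\leq\; \frac{C''(t+M)}{\sqrt{V(r)}}.
\]
Choose $r_k\in(0,1)$ with $V(r_k)=k^6$; this is possible since $V$ is continuous, strictly increasing and blows up at $1$. Taking $t=k$,
\[
\sum_{k\geq 1} P\bigl(Y_{I,r_k}<\psi(k)\bigr)\;\lesssim\; \sum_{k\geq 1}\frac{k+M}{k^3}<\infty,
\]
so the first Borel--Cantelli lemma yields $Y_{I,r_k}\geq \psi(k)\to\infty$ a.s.\ along this subsequence, whence $\sup_{0<r<1}Y_{I,r}=\infty$ a.s. Exhausting $(0,2\pi)$ by a countable base of arcs delivers $\psi$-(SNB) a.s.

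\medskip

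\textbf{Main obstacle.} The principal technical point is the Berry--Esseen substitution: because Lemma \ref{lem:BE} requires centered unit-variance summands, one must absorb the deterministic drift $\sum\mathbb{E}[X_k]\,r^k\cos(k\theta)$ into the target interval, which is what produces the $(t+M)$ rather than $t$ on the right-hand side. Verifying that this hybrid bound is still summable along the chosen radii $r_k$, and that the Berry--Esseen error term of order $M/\sqrt{V(r)}$ does not spoil Borel--Cantelli, is the crucial check.
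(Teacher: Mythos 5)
Your proof is correct and follows essentially the same strategy as the paper's: replace the Rogozin small-ball estimate of Theorem~\ref{thm:s-nat-bd} with a Berry--Esseen bound, get a pointwise anti-concentration estimate on $|F(re^{i\theta})|$ of order $(t+M)/\sqrt{V(r)}$, pass it through Lemma~\ref{lem:mix-sb} to arc averages of $\psi(|F|)$, and close with Borel--Cantelli along radii $r_k$ chosen so that $V(r_k)$ grows polynomially. This matches the paper's Claim~\ref{clm:snb-var} and its deployment, and your bookkeeping of the centering term $\sum\mathbb{E}[X_k]r^k\cos(k\theta)$ and of the Berry--Esseen remainder is sound. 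The only genuine divergence is in Step~1: you establish $r_F\leq 1$ via Paley--Zygmund applied to $X_n^2$, getting $|X_n|>\rho^n$ infinitely often for each $\rho<1$, whereas the paper invokes Kolmogorov's three-series theorem to show $\sum X_n$ diverges a.s. Both are valid and roughly the same length; yours avoids citing the three-series theorem at the cost of a slightly more hands-on argument. The paper also factors the second bullet through a more general intermediate result (Proposition~\ref{prop:3rd-var}, requiring only $\sup_n \mathbb{E}|X_n-\mathbb{E}X_n|^3/\mathrm{Var}[X_n]<\infty$), while you specialize directly to uniformly bounded coefficients; this is a stylistic choice with no loss for the theorem as stated.
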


We shall infer the Theorem from the next more general result. 
The approach is in the spirit of the proof of Theorem \ref{thm:s-nat-bd}, and yields a new criterion by assuming a condition on third moments only.

\begin{proposition} \label{prop:3rd-var}
	Let $\{X_n\}_{n=0}^\infty$ be independent r.v.s. with $\sup_n \mathbb E |X_n-\mathbb E[X_n]|^3 / {\rm Var}[X_n] <\infty$, 
	and $\sum_{n=0}^\infty {\rm Var}[X_n] =\infty$. 
	Suppose that the random power series $F(z) = \sum_{n=0}^\infty X_n  z^n$ has $r_F=1$ a.s. Then, $F$ has $|z|=1$ as $\psi$-(SNB) a.s.
\end{proposition}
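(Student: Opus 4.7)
My plan is to follow the blueprint of the proof of Theorem \ref{thm:s-nat-bd}, replacing Rogozin's inequality by the Berry--Esseen estimate (Lemma \ref{lem:BE}) and substituting the weighted concentration sum $A(r)$ by
\[
	\widetilde A(r):=\sum_{k=0}^\infty \sigma_k^2 r^{2k}, \qquad \sigma_k^2:=\mathrm{Var}[X_k].
\]
The hypothesis $\sum_k \sigma_k^2 = \infty$ guarantees $\widetilde A(r)\to \infty$ as $r\uparrow 1$, so $\widetilde A(r)$ plays the exact role in this argument that $A(r)$ played in the proof of Theorem \ref{thm:s-nat-bd}. Once the small-ball estimate
\[
	P\bigl(Y_{I,r}\leq \psi(t)\bigr)\leq \frac{C(t+1)}{\sqrt{\widetilde A(r)}}
\]
(analogous to Claim \ref{claim:main-sb}) is established, the remainder of the argument---choosing $r_k$ with $\widetilde A(r_k)=k^6$, applying Borel--Cantelli, and taking a union over a countable base of arcs---is a verbatim transcription of the corresponding portion of the proof of Theorem \ref{thm:s-nat-bd}.

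The heart of the proof is thus the small-ball estimate above. Fix $I\subset (0,2\pi)$, $0<r<1$ and $\theta\in I^\ast:=I\setminus \Theta$, and set $Y_k:=X_k-\mathbb E[X_k]$. Decomposing
\[
	F_N(re^{i\theta})-\mathbb E[F_N(re^{i\theta})] = S_N(\theta)+iT_N(\theta), \quad S_N(\theta):=\sum_{k=0}^N Y_k r^k\cos(k\theta),
\]
we observe, as in the proof of Claim \ref{claim:main-sb}, that one of the two partial variances $\sum_{k\leq N}\sigma_k^2 r^{2k}\cos^2(k\theta)$ or $\sum_{k\leq N}\sigma_k^2 r^{2k}\sin^2(k\theta)$ is at least $\widetilde A_N(r)/2$. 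Assume WLOG the former, and denote it $V_N(\theta)$. Normalising $\tilde Y_k:=Y_k/\sigma_k$ and applying Lemma \ref{lem:BE} to the unit direction $\bigl(\sigma_k r^k\cos(k\theta)/\sqrt{V_N(\theta)}\bigr)_k$, the Berry--Esseen error becomes
\[
	\frac{1}{V_N(\theta)^{3/2}}\sum_{k=0}^N r^{3k}|\cos(k\theta)|^3\,\mathbb E|Y_k|^3 \;\leq\; \frac{M_3}{V_N(\theta)^{3/2}}\sum_{k=0}^N \sigma_k^2 r^{2k}\cos^2(k\theta) \;=\; \frac{M_3}{\sqrt{V_N(\theta)}},
\]
where $M_3:=\sup_n \mathbb E|Y_n|^3/\sigma_n^2<\infty$ and we used $|\cos|^3\leq \cos^2$ and $r^{3k}\leq r^{2k}$. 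Combining the Gaussian small-ball bound (the density of $g\sim N(0,1)$ is bounded by $1/\sqrt{2\pi}$) with this error, and accounting for the deterministic shift by $\mathrm{Re}\,\mathbb E[F_N(re^{i\theta})]$, yields
\[
	P\bigl(|F_N(re^{i\theta})|\leq t\bigr)\;\leq\; P\bigl(|S_N(\theta)-\mathrm{Re}\,\mathbb E[F_N(re^{i\theta})]|\leq t\bigr) \;\leq\; \frac{C(t+1)}{\sqrt{V_N(\theta)}} \;\leq\; \frac{C'(t+1)}{\sqrt{\widetilde A_N(r)}},
\]
for a constant depending only on $M_3$.

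Passing $N\to \infty$ via Fatou's lemma and the a.s.\ uniform convergence of $F_N\to F$ on the circle $|z|=r$, then appealing to the monotonicity of $\psi$ and Lemma \ref{lem:mix-sb} in the same way as in Claim \ref{claim:main-sb}, produces the desired small-ball bound on $Y_{I,r}$, and the proof closes as in Theorem \ref{thm:s-nat-bd}. The main obstacle is ensuring that the Berry--Esseen error does not swallow the Gaussian anti-concentration term $t/\sqrt{V_N(\theta)}$, uniformly over $\theta\in I^\ast$; this is precisely what the third-moment hypothesis $\mathbb E|Y_n|^3\lesssim \sigma_n^2$ secures. Without it, the error would have the uncontrolled form $\sum_k r^{3k}|\cos(k\theta)|^3\mathbb E|Y_k|^3/V_N(\theta)^{3/2}$, which need not be comparable to $1/\sqrt{V_N(\theta)}$. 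As a consequence, Theorem \ref{thm:snb-be} follows at once, since the uniform bound $|X_n|\leq M$ implies $\mathbb E|Y_n|^3\leq 2M\,\sigma_n^2$.
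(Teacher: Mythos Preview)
Your proposal is correct and follows essentially the same approach as the paper's proof: both replace Rogozin's inequality by the Berry--Esseen bound (Lemma \ref{lem:BE}), use the same cosine/sine variance dichotomy, control the Berry--Esseen remainder via the third-moment hypothesis $\mathbb E|Y_k|^3 \leq M_3\sigma_k^2$ together with $r^{3k}|\cos|^3\le r^{2k}\cos^2$, and then conclude exactly as in Theorem \ref{thm:s-nat-bd}. The paper parameterises its small-ball claim with threshold $\psi\bigl(ct\sqrt{V(r)}\bigr)$ rather than your $\psi(t)$, but the two formulations are equivalent, and your sign slip in the deterministic shift is harmless since the resulting bound is translation-invariant.
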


\noindent {\it Proof. } The main ingredient of the proof, as occurred in the proof of Theorem \ref{thm:s-nat-bd}, is the derivation of a distributional inequality akin
to \eqref{eq:main-sb-1}. Back then, it was achieved by leveraging Rogozin's inequality; this time we employ the Berry-Esseen estimate (Lemma \ref{lem:BE}). 
Let $K:= \sup_n \mathbb E |X_n-\mathbb E[X_n]|^3 / {\rm Var}[X_n]$. We have the following:

\begin{claim} \label{clm:snb-var}
	Let $(X_n)$ be as above. Fix $0<r<1$ and $I\subset (0,2\pi)$. Then, for all $t>0$ we have
		\begin{align}
			P \left( \fint_I \psi(|F(re^{i\theta})|) \, d\theta < \psi \left( c t \sqrt{V(r)} \right) \right) \leq C \left( t + \frac{K}{[V(r)]^{1/2}} \right),
		\end{align}
	where $V(r):= \sum_{k=0}^\infty r^{2k}v_k$ and $v_k ={\rm Var}[X_k]$.
\end{claim}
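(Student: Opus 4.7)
The plan is to mirror the proof of Theorem \ref{thm:s-nat-bd}, but to substitute Berry--Esseen (Lemma \ref{lem:BE}) for Rogozin's inequality in the pointwise step. The goal is to show, for each $\theta$ outside a null set, that $|F(re^{i\theta})|$ has a small-ball estimate of the form $P(|F(re^{i\theta})|<s)\lesssim(s+K)/\sqrt{V(r)}$, and then integrate in $\theta$ using Lemma \ref{lem:mix-sb}.

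First, I fix $\theta\in I^\ast:=I\setminus\Theta$ and decompose the partial sum into real and imaginary parts: $F_N(re^{i\theta})=R_N+iJ_N$ with $R_N=\sum_{k=0}^N X_k r^k\cos(k\theta)$. Because $\mathrm{Var}(R_N)+\mathrm{Var}(J_N)=V_N(r)$, at least one of these variances, say $\sigma_N^2:=\mathrm{Var}(R_N)$ (the other case being symmetric), satisfies $\sigma_N^2\ge V_N(r)/2$. Setting $Y_k=(X_k-\mathbb E X_k)r^k\cos(k\theta)$ and $\mu_N=\mathbb E R_N$, the $Y_k$ are independent, centered, with $\sum_k \mathbb E Y_k^2=\sigma_N^2$, and by the hypothesis $\mathbb E|X_k-\mathbb E X_k|^3\le K v_k$ together with $|r^k\cos(k\theta)|\le 1$,
\begin{align*}
\sum_{k=0}^N \mathbb E|Y_k|^3 \le K \sum_{k=0}^N v_k |r^k\cos(k\theta)|^3 \le K\sigma_N^2.
\end{align*}

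Applying Berry--Esseen to $Z=(R_N-\mu_N)/\sigma_N$ gives $\sup_u|P(Z\le u)-\Phi(u)|\le CK/\sigma_N$. For any $a>0$, bounding the Gaussian mass of an interval of length $2a/\sigma_N$ by its density:
\begin{align*}
P(|F_N(re^{i\theta})|\le a)\le P(|R_N|\le a) \le \frac{2a}{\sigma_N\sqrt{2\pi}}+\frac{2CK}{\sigma_N}\le \frac{C'(a+K)}{\sqrt{V_N(r)}}.
\end{align*}
Letting $N\to\infty$, using a.s.\ uniform convergence of $F_N\to F$ on $\{|z|\le r\}$ (since $r<r_F=1$ a.s.) and $V_N(r)\to V(r)$, Fatou yields $P(|F(re^{i\theta})|<a)\le C(a+K)/\sqrt{V(r)}$. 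Substituting $a=ct\sqrt{V(r)}$ for a suitable absolute constant $c$, we obtain the pointwise estimate
\begin{align*}
P\bigl(|F(re^{i\theta})|<ct\sqrt{V(r)}\bigr) \le C\Bigl(t+\frac{K}{\sqrt{V(r)}}\Bigr),\qquad \theta\in I^\ast.
\end{align*}

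By monotonicity of $\psi$, the same bound holds for $P(\psi(|F(re^{i\theta})|)<\psi(ct\sqrt{V(r)}))$. Since $\Theta$ has measure zero, $Y_{I,r}=\fint_{I^\ast}\psi(|F(re^{i\theta})|)\,d\theta$ almost surely, and applying Lemma \ref{lem:mix-sb} with $(Y,\nu)=(I^\ast,d\theta/|I^\ast|)$ yields the claim (after harmlessly absorbing a factor of two into the constants $c,C$).

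The main obstacle is controlling the (possibly large) deterministic shift $\mu_N$: without a hypothesis on the means $\mathbb E X_k$, the range $[-a-\mu_N,a-\mu_N]$ is uncontrolled, and crude tail estimates would fail. Berry--Esseen saves the day because its error is uniform over \emph{all} intervals, so only the length $2a$ matters after normalizing by $\sigma_N$. A secondary technical point is that the variance splitting $\sigma_N^2\ge V_N(r)/2$ requires $\cos(k\theta)$ (respectively $\sin(k\theta)$) not to vanish degenerately --- which is precisely why the countable null set $\Theta$ is excised.
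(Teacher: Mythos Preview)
Your proof is correct and follows essentially the same route as the paper: split into real and imaginary parts, use the pigeonhole $\sigma_N^2\ge V_N(r)/2$, apply Berry--Esseen to the dominant part, bound the third-moment sum by $K\sigma_N^2$ via $|r^k\cos(k\theta)|^3\le |r^k\cos(k\theta)|^2$, pass to the limit with Fatou, and conclude with the monotonicity of $\psi$ and Lemma~\ref{lem:mix-sb}. One small remark: your final comment about $\Theta$ is not actually needed here---unlike in the Rogozin argument (where one needed $\lambda_k>0$), Berry--Esseen tolerates some $\cos(k\theta)=0$, so the excision of $\Theta$ is harmless but not essential in this proof.
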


\smallskip

\noindent {\it Proof of Claim \ref{clm:snb-var}.} 
We begin with a consequence of Berry-Esseen estimate (Lemma \ref{lem:BE}) which is tailored to our needs: if $Y_1, \ldots, Y_n$ are independent and 
$S_n = Y_1+\ldots + Y_n$, then for all $t>0$ we have
	\begin{align} \label{eq:BE-2}
		P\left( |S_n| \leq t\sqrt{ {\rm Var}[S_n] }\right) \leq C \left(t + \frac{\sum_{j=1}^n \mathbb E|Y_j- \mathbb E[Y_j]|^3}{( {\rm Var}[S_n])^{3/2}} \right).
	\end{align}
	
\noindent 

We fix $r\in (0,1)$ and $\theta \in I$. We write $F_N(z) = \sum_{k=0}^N X_k z^k$. Then, at least one of the following holds true:	
	\begin{align}
		\sum_{k=0}^N r^{2k} v_k \cos^2(k\theta) \geq \frac{1}{2} V_N(r), \quad \sum_{k=0}^N r^{2k} v_k \sin^2(k\theta) \geq \frac{1}{2} V_N(r).	
	\end{align}
Let's assume the former case (we work similarly in the latter). Now we apply the BE estimate \eqref{eq:BE-2} for ``$Y_k= X_k r^k \cos (k \theta)$''. 
It is ${\rm Var}[S_N] = \sum_{k=0}^N r^{2k} v_k \cos^2(k\theta) \geq \frac{1}{2}V_N(r)$, hence
	\begin{align}
		P \left( |S_N| \leq t \sqrt{ {\rm Var}[S_N] } \right) \leq C \left( t + \frac{c K {\rm Var}[S_N] }{ ( {\rm Var}[S_N] )^{3/2} } \right) \leq C' \left( t + \frac{K}{\sqrt{ V_N(r)} } \right),
 	\end{align}	
where we have also used that $\mathbb E|Y_k-\mathbb E[Y_k|^3 \lesssim K r^{3k}|\cos(k\theta)|^3 K v_k$ for all $k$.
Note also that $|S_N| \leq |F_N(re^{i\theta})|$, hence we obtain
	\begin{align}
		P \left( |F_N(re^{i\theta})| \leq \frac{t}{\sqrt{2}} \sqrt{V_N(r)} \right) \leq C \sqrt{2} \left( t + \frac{K}{\sqrt{ V_N(r)}} \right).
	\end{align}
From the a.s. (uniform) convergence and Fatou's lemma we have 
	\begin{align}
		P \left( |F(re^{i\theta})| < \frac{t}{2} \sqrt{V(r)} \right) 
				\leq \liminf_N P \left( |f_N(re^{i\theta})| \leq \frac{t}{\sqrt{2}} \sqrt{V_N(r)} \right) \leq C' \left( t + \frac{K}{\sqrt{ V(r)}} \right).	
	\end{align}
The rest of the proof follows the argument of Theorem \ref{thm:s-nat-bd} mutatis mutandis. \prend

\begin{remark}
	As in Theorem \ref{thm:s-nat-bd}, the conditions we impose here are shift invariant, hence the same conclusion continues to hold for any perturbation
	$F+f$ by a deterministic $f(z) = \sum_{n=0}^\infty c_n z^n$ with $r_{F+f}=1$ a.s.
\end{remark}

\medskip

\noindent {\it Proof of Theorem \ref{thm:snb-be}.} Since $|X_n| \leq M$ a.s. we readily get $\limsup_n |X_n|^{1/n} \leq 1$ a.s. 
On the other hand, the boundedness and the divergence of the series $\sum_{n=0}^\infty {\rm Var}[X_n]$ in conjunction 
with Kolmogorov's 3-series theorem \cite{Gut-book} implies that the series $\sum_{n=0}^\infty X_n$ diverges a.s.
In particular, $r_F \leq 1$ a.s. Combining with the above we infer that $r_F=1$ a.s.

For the second assertion note that the assumptions of Proposition \ref{prop:3rd-var} are fulfilled since
	\[
		\mathbb E|X_n- \mathbb E[X_n]|^3 \leq 2M {\rm Var}[X_n],
	\]
for all $n$. The proof is complete. \prend

\subsection{Pointwise convergence on the circle of convergence}\label{pointwise}

One of the reasons for establishing the existence of a strong, and therefore an $L^\infty$-natural boundary,
is to gain information on the pointwise behavior of the power series on the circle of convergence. For example we have the 
following simple:

\begin{fact}  \label{q:uni-cvg-snb-1}
	Let $(c_k) \subset \mathbb C$ and suppose that the power series $f(z) = \sum_{k=0}^\infty c_k z^k$ has radius of convergence $r_f=1$. 
	If $f$ has $\mathbb T$ as $L^\infty$-natural boundary, then the following set 
	\[
		B _f := \left \{ z\in \mathbb T : \limsup_{N\to \infty} \left| \sum_{k=0}^N c_k z^k \right| = \infty \right\}
	\]
	is dense in the circle.   
\end{fact}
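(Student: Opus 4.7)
The plan is to argue by contradiction, combining Baire category on $\mathbb{T}$ with Abel summation. Suppose $B_f$ is \emph{not} dense in $\mathbb{T}$. Then there exists an open sub-arc $I_0 \subset \mathbb{T}$ with $I_0 \cap B_f = \emptyset$; in other words, for every $z \in I_0$ the partial sums $S_N(z) := \sum_{k=0}^N c_k z^k$ satisfy $\sup_N |S_N(z)| < \infty$. Write
$$I_0 = \bigcup_{M=1}^\infty E_M, \qquad E_M := \{ z \in I_0 : \sup_N |S_N(z)| \leq M \}.$$
Since each $S_N$ is continuous on $\mathbb{T}$, each $E_M$ is a closed subset of the (locally compact, hence Baire) arc $I_0$. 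By the Baire category theorem, some $E_{M_0}$ has non-empty interior in $I_0$, so there is a sub-arc $J \subset I_0$ and a finite constant $M_0$ with
$$|S_N(e^{i\theta})| \leq M_0 \quad \text{for all } N \geq 0 \text{ and all } e^{i\theta} \in J.$$

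Next I would invoke Abel summation to transfer this uniform partial-sum bound on $J$ to a uniform bound of $|f|$ on the whole sector over $J$. For fixed $e^{i\theta} \in J$ and $0 < r < 1$, partial summation gives, for every $M$,
$$\sum_{k=0}^M c_k r^k e^{ik\theta} = S_M(e^{i\theta}) r^M + (1-r) \sum_{k=0}^{M-1} S_k(e^{i\theta}) r^k.$$
Since $|S_M(e^{i\theta})| \leq M_0$ and $r^M \to 0$ as $M \to \infty$, letting $M \to \infty$ yields
$$f(re^{i\theta}) = (1-r) \sum_{k=0}^\infty S_k(e^{i\theta}) r^k, \qquad |f(re^{i\theta})| \leq (1-r) \sum_{k=0}^\infty M_0 r^k = M_0.$$
Consequently
$$\sup_{0<r<1, \, e^{i\theta} \in J} |f(re^{i\theta})| \leq M_0 < \infty,$$
which directly contradicts the hypothesis that $\mathbb{T}$ is an $L^\infty$-natural boundary for $f$ (applied to the arc $J$). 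This forces $B_f$ to be dense in $\mathbb{T}$.

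There is no real obstacle in this argument; the only point to verify carefully is that the $E_M$ are genuinely closed (which uses continuity of polynomials) and that the Abel-summation boundary term decays — both immediate from $|S_N| \leq M_0$ and $r < 1$. The mechanism is exactly the standard principle that uniform boundedness of partial sums of a power series on a boundary arc pulls back to uniform boundedness of the series itself on the sector above that arc.
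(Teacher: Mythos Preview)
Your proof is correct and follows essentially the same route as the paper: contradiction, a Baire-category step to upgrade pointwise boundedness of partial sums on an arc to uniform boundedness on a sub-arc, and then Abel summation to bound $|f|$ on the sector. The only cosmetic difference is that the paper invokes these two steps by name (Osgood's theorem for the first, Abel's inequality for the second) whereas you unpack them explicitly.
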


\noindent {\it Proof of Fact \ref{q:uni-cvg-snb-1}.} If not, there is a closed arc $J=\{z\in \mathbb T : \gamma \leq {\rm Arg}(z) \leq \delta\}$, ($0\leq \gamma < \delta <2\pi$)
for which $J\cap B = \emptyset$. That is, the 
sequence $f_N(\theta) :=\sum_{k=0}^N c_k e^{i k\theta}, \; N=1,2,\ldots$ is pointwise bounded for all $\theta \in [\gamma,\delta]$. Hence, the 
Osgood theorem \cite[p. 160, Theorem 32]{Roy} 
yields the existence of a closed sub-interval $I=[a,b]\subset [\gamma, \delta]$ on which $(f_N)$ is uniformly bounded.
Namely, there exists $M>0$ such that 
	\begin{align} \label{eq:ub}
		\sup_N \sup_{\theta \in I} \left| \sum_{k=0}^N c_k e^{i k\theta} \right| \leq M.	
	\end{align}
We have the following:

\smallskip

\noindent {\it Claim.} For the circular sector $S = \{ re^{i\theta} \mid 0\leq r < 1, \, \theta \in I\}$ we have $|f(z)| \leq M$ for all $z\in S$.

\smallskip

\noindent {\it Proof of Claim.} Let $z\in S$; there exist $\phi \in I$ and $0\leq t<1$ so that $z= t e^{i\phi}$. Let 
$N\in \mathbb N$ be arbitrary, but fixed. A direct application of Abel's inequality \cite[Problem 14.1]{Stee-cs-book}) yields
	\begin{align}
		|f_N(z)| = \left | \sum_{k=0}^N c_k t^k e^{i k \phi} \right| \leq  \max_{m\leq N}\left|\sum_{k=0}^m c_k e^{i k \phi}\right| \stackrel{\eqref{eq:ub} }\leq M. 
	\end{align}
Since $(f_N)$ converges to $f$ at each $z\in S$ we obtain $|f(z)| \leq M$ for all $z\in S$, as asserted. \prend

\medskip

With regard to random power series, Dvoretzky and Erd\H os in \cite{DE} gave a sufficient condition for which the {\it everywhere} divergence
of a Rademacher power series $\sum_{k=0}^\infty c_k \varepsilon_k z^k$ on $\mathbb T$ is a sure event. Their result reads as follows:
	\begin{quote}
		(Dvoretzky, Erd\H os, 1959). 
		\it Let $(t_k)$ be a monotone sequence of positive numbers with $t_k \downarrow 0$ and 
		$\limsup_{k} \left\{ \frac{1}{\log(1/t_k)} \sum_{j=0}^k t_j^2 \right\} >0$. 
		Then, for any sequence of complex numbers $(c_k)$ with $|c_k|\geq t_k$ for all $k$ we have that a.s. the Rademacher power series 
		$F(z)=\sum_{k=0}^\infty c_k \varepsilon_k z^k$ has $B_F=\mathbb T$.
	\end{quote}

For context let us consider the simple case $t_k = k ^{-1/2}$. Note that all conditions 
of Dvoretzky and Erd\H os's result are fulfilled, and hence the resulting Rademacher power
series $F$ diverges everywhere on the unit circle a.s. At the same time $\sum_{k=0}^\infty |c_k|^2 = \infty$, and Proposition \ref{prop:Rad-snb} yields that 
$F$ has $\psi$-(SNB). Although it might be compelling to link the presence of (SNB) with the everywhere divergence of $F$ on the boundary, this is simply not
the case. Quite remarkably, Michelen and Sawhney proved very recently \cite{MS} a sharp threshold phenomenon for the class of Rademacher random power series
affirming a conjecture of Erd\H os from \cite[Section V]{Erd-unsolv}. Their result asserts that if $(c_k)$ is a sequence of complex numbers satisfying $|c_k|= o(k^{-1/2})$, then 
for the Rademacher power series $F(z) = \sum_{k=0}^\infty c_k \varepsilon_k z^k$ the set 
$C_F: =\{z\in \mathbb T \; : \; \sum_{k=0}^\infty c_k \varepsilon_k z^k \; \textrm{converges} \}$ has almost surely Hausdorff dimension $1$.

Consequently, for $c_k = (k \log k)^{-1/2}$ we infer that the Rademacher power series $F$ may have $|z|=1$ as a (SNB) a.s, and yet the set of points of 
convergence on the boundary is large in a certain sense. 
These simple examples highlight how distinct the two singular behaviors (strong natural boundary vs divergence on the boundary) can be; 
one may only hope for mere implications from one to the other, 
and modified criteria should be investigated in the context of a random analytic function for establishing them.

\subsection{Local log-integrability} \label{S:log-int}

With the method we developed so far it's not clear if we can derive  that the integral $\int_I \log|F(re^{i\theta})| \, d\theta$ over any
arc $I$ blows up when $r\uparrow 1$. This is mainly due to the fact that $\log|F|$ may also take negative values. 
The significance of the quantity $\int_I \log|f(re^{i\theta})| \, d\theta$ for analytic functions $f$ on the disc is featured by its connection to the distribution
of roots of $f$ via the well-known Jensen formula \cite{AN}.

Here we show that one can establish local non log-integrability under some additional moment assumptions and slightly stronger anti-concentration properties 
on the r.v.s. of coefficients. The approach follows mutatis mutandis the reasoning presented in \cite[Section 4.2]{DV}, 
thus we only highlight the appropriate adjustments. The result reads as follows:

\begin{theorem} \label{thm:local-log-int}
	Let $\{X_k\}_{k=0}^\infty$ be independent random variables with $\mathbb E[X_k]=0$, $\sup_k \mathbb E[X_k^2]<\infty$, 
	and assume that $\sup_k Q(X_k, \lambda) \leq b \lambda$ for all $\lambda>0$ and some $b>0$.
Then, the random power series $F(z) = \sum_{k=0}^\infty X_k z^k$ has radius of convergence $r_F=1$, and for any 
arc $I \subset (0,2\pi)$ satisfies
		\begin{align}
			\sup_{0<r<1} \int_I \log|F(re^{i\theta})| \, d\theta = \infty \quad \textrm{a.s.}
		\end{align}
\end{theorem}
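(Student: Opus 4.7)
The plan is to adapt the approach of \cite[Section 4.2]{DV} from the full circle to arcs, by splitting $\log|F| = \log^+|F| - \log^-|F|$ and controlling each part separately. Write $V(r) := \sum_{k=0}^\infty r^{2k}\mathbb{E}[X_k^2]$. I would first verify $r_F = 1$ a.s. The bounded second moment gives $\limsup_k|X_k|^{1/k} \leq 1$ by Markov and Borel--Cantelli; applying $Q(X_k,\lambda) \leq b\lambda$ at $\lambda = 1/(4b)$ yields $P(|X_k| \geq 1/(8b)) \geq 3/4$ uniformly, hence $\limsup_k |X_k|^{1/k} \geq 1$ a.s. The same estimate forces $\inf_k \mathbb{E}[X_k^2] \geq c_0 > 0$, so $V(r) \gtrsim 1/(1-r^2) \to \infty$ as $r \uparrow 1$.

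The core analytical ingredient is a two-dimensional small-ball bound
\[ P\bigl(|F(re^{i\theta})| \leq t\bigr) \leq \frac{Ct^2}{V(r)}, \qquad \theta \notin \Theta,\ 0 < t \leq 1, \]
obtained by Fourier inversion on the joint characteristic function of $(\mathrm{Re}\,F_N(re^{i\theta}), \mathrm{Im}\,F_N(re^{i\theta}))$: the hypothesis $Q(X_k,\lambda) \leq b\lambda$ translates into a uniform density bound of order $1/V(r)$ for this two-dimensional sum, yielding a quadratic (not merely linear) anti-concentration in $t$. Integrating, $\mathbb{E}\log^-|F(re^{i\theta})| = \int_0^\infty P(|F(re^{i\theta})| \leq e^{-s})\,ds \lesssim 1/V(r)$, and by Fubini $\mathbb{E}\int_I \log^-|F|\, d\theta \lesssim |I|/V(r)$. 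Picking $r_n$ with $V(r_n) = n^3$ renders this summable, and Borel--Cantelli yields $\int_I \log^-|F(r_n e^{i\theta})|\, d\theta \to 0$ a.s.

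For the positive part, combining the small-ball ($P(|F| \leq c\sqrt{V(r)}) \leq Cc^2$) with Chebyshev ($P(|F| \geq T\sqrt{V(r)}) \leq 1/T^2$ from $\mathbb{E}|F|^2 = V(r)$) traps $|F(re^{i\theta})| \in [c\sqrt{V(r)}, T\sqrt{V(r)}]$ with probability at least $1/2$ for appropriate $c,T$. Hence $\mathbb{E}\log^+|F(re^{i\theta})| \geq \tfrac{1}{4}\log V(r)$, and after $\theta$-integration $\mathbb{E}\int_I \log^+|F(r_n e^{i\theta})|\,d\theta \geq (|I|/4)\log V(r_n) \to \infty$. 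A companion moment bound $\mathbb{E}(\log^+|F|)^2 \lesssim (\log V(r))^2$ (using $\mathbb{E}|F|^2 = V(r)$ and Markov to handle the far tail) feeds Paley--Zygmund to give positive probability of divergence, and the zero-one dichotomy (Proposition \ref{prop:snb-0-1}) applied to the test function $\psi = \log^+$ --- which satisfies \eqref{eq:sub-ad} --- upgrades this to almost-sure Nevanlinna-SNB on the given arc, which combined with the boundedness of $\int_I \log^-|F|$ gives the stated divergence.

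The main obstacle is the two-dimensional small-ball estimate. A naive one-dimensional Rogozin applied to $S^R := \mathrm{Re}\,F_N(re^{i\theta})$ alone gives only $P(|S^R| \leq t) \lesssim 1/\sqrt{V(r)\log(1/(bt))}$ for very small $t$, whose integrated tail $\int_0^\infty 1/\sqrt{V(r)(s + \log(1/b))}\,ds$ diverges. The improvement to the quadratic $t^2/V(r)$ bound requires working with the joint law of $(S^R, S^I)$ and leverages the density-like strength of the hypothesis $Q(X_k,\lambda) \leq b\lambda$, as opposed to the weaker anti-concentration conditions underlying Theorem \ref{thm:s-nat-bd}.
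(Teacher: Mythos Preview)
Your approach is essentially correct and genuinely different from the paper's. Both routes rest on the same technical heart --- a pointwise small-ball estimate for $|F_N(re^{i\theta})|$ exploiting the density-type hypothesis $Q(X_k,\lambda)\leq b\lambda$ --- but they package and deploy it differently.

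The paper does not split $\log = \log^+ - \log^-$. Instead it imports from \cite{DV} a single two-sided concentration inequality (Lemma~\ref{f:4-5}):
\[
P\bigl(|\log |F_N(z)|^2 - \log \rho_N(r)\,|>t\bigr)\leq CK_b e^{-ct},\qquad \rho_N(r)=\sum_{k\le N} r^{2k}\mathbb E[X_k^2],
\]
valid for every $z$. This already contains your quadratic small-ball (lower tail) and a sub-gaussian upper tail, uniformly in $\theta$. Then a deterministic ``tail-under-mixtures'' lemma (\cite[Lemma 4.7]{DV}) transfers the pointwise exponential tail directly to the arc-average $\fint_I\log|F_N|$, yielding Proposition~\ref{prop:sector} with no appeal to Paley--Zygmund or a zero--one law. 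The passage $F_N\to F$ is handled by a separate complex-analytic lemma on convergence of $\int_I\log|f_n|$ under locally uniform convergence (Proposition~\ref{prop:cvg-arcs}).

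What each buys: the paper's route is shorter and yields a quantitative distributional inequality for $\fint_I\log|F_N|$ at every finite $N$, not just the asymptotic a.s.\ statement; it also sidesteps the issue of aligning the sequences along which $\log^+$ blows up and $\log^-$ stays bounded (you would need to argue, via reverse Fatou and a tail-event check, that $\{\fint_I\log^+|F(r_n\cdot)|>c\log V(r_n)\}$ occurs infinitely often along the \emph{same} $r_n$ you chose for $\log^-$; invoking Proposition~\ref{prop:snb-0-1} alone does not pin this down to the given arc --- you really want Fact~\ref{fct:2-1}). Your route, on the other hand, avoids Proposition~\ref{prop:cvg-arcs} entirely by working with $F$ directly via Fatou, and makes the role of the $\log^-$ control more transparent. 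The two-dimensional small-ball you flag as the main obstacle is exactly the lower tail of Lemma~\ref{f:4-5}; the uniformity in $\theta$ you need is nontrivial (for $\theta$ near rational multiples of $\pi$ the sum is nearly one-dimensional), and the clean way to obtain it is precisely the argument in \cite[\S4.2]{DV} that the paper invokes.
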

 
This result will follow from a more precise quantitative form for random polynomials (i.e., the partial sums of $F$). To this end, fix $N\in\mathbb N$ and
$X_0, \ldots, X_N$ independent r.v.s as in Theorem \ref{thm:local-log-int}. Let $F_N(z) = \sum_{k=0}^N X_k z^k$ be the partial sum of $F$ and let 
$W_z:= |F_N(z)|^2$. An argument similar to the one in \cite[Section 4.2]{DV} will yield

\begin{lemma}\label{f:4-5}
For all $z\in \mathbb C$ and all $t>0$ it holds that
\begin{align} \label{eq:4-1}
		P\left( \left|\, \log W_z - \log \mathbb E [W_z] \,\right| > t \right) \leq CK_b e^{-ct},
\end{align} 
where $K_b>0$ is a constant depending only on $b$.
\end{lemma}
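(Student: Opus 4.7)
The approach is to split the symmetric deviation into its upper and lower halves,
\[
P\bigl(|\log W_z - \log \mathbb{E}W_z| > t\bigr) \leq P\bigl(W_z > e^{t}\mathbb{E}W_z\bigr) + P\bigl(W_z < e^{-t}\mathbb{E}W_z\bigr),
\]
and treat each separately. The upper tail is immediate from Markov's inequality, giving $P(W_z > e^{t}\mathbb{E}W_z) \leq e^{-t}$; this uses only that $W_z \geq 0$ and $\mathbb{E}W_z = \sum_{k=0}^N \sigma_k^2 |z|^{2k} < \infty$, with $\sigma_k^2 := \mathbb{E}X_k^2$. No anti-concentration is needed here.

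The lower tail is where the density-type hypothesis $\sup_k Q(X_k,\lambda) \leq b\lambda$ plays its role, and the plan is to reduce it to a genuine small-ball problem for a real random sum. Write $F_N(z) = A+iB$ with $A = \sum_{k=0}^N X_k \operatorname{Re}(z^k)$ and $B = \sum_{k=0}^N X_k \operatorname{Im}(z^k)$, both real linear combinations of independent centered $X_k$'s. Because the $X_k$ are centered, $\mathbb{E}A^2 + \mathbb{E}B^2 = \mathbb{E}W_z$, so after possibly interchanging $A$ and $B$ one has $\mathbb{E}A^2 \geq \tfrac12\mathbb{E}W_z$. Since $W_z \geq A^2$,
\[
P\bigl(W_z < e^{-t}\mathbb{E}W_z\bigr) \leq P\bigl(|A| \leq e^{-t/2}\sqrt{\mathbb{E}W_z}\bigr),
\]
and the task is to show this last quantity is $\lesssim K_b\, e^{-ct}$.

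The bound on the small-ball probability $P(|A|\leq L)$ with $L = e^{-t/2}\sqrt{\mathbb{E}W_z}$ comes from Rogozin's inequality (Lemma \ref{lem:Rogo}) applied to the independent summands $\xi_k = X_k \operatorname{Re}(z^k)$ with levels $\lambda_k = |\operatorname{Re}(z^k)|/(2b)$: the hypothesis gives $Q(\xi_k,\lambda_k) = Q(X_k, 1/(2b)) \leq 1/2$ uniformly in $k$. The same hypothesis, via Chebyshev at scale $1/(2b)$, forces $\sigma_k^2 \geq c_b > 0$ uniformly, and combined with $\sup_k \sigma_k^2 < \infty$ one gets $\mathbb{E}A^2 \asymp \sum_k \operatorname{Re}(z^k)^2$. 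Taking $\beta_k = \min\{L,\lambda_k/2\}$ and examining which of the two truncations binds, one checks in each regime that $\sum_k \beta_k^2 \gtrsim L^2 \,\mathbb{E}W_z / b^{-2}$, so Rogozin yields $P(|A|\leq L) \lesssim bL/\sqrt{\mathbb{E}W_z} = b\, e^{-t/2}$. Summing with the upper tail gives the stated bound with $c = 1/2$ and $K_b$ depending only on $b$. The main obstacle is the last step: because $|\operatorname{Re}(z^k)|$ can range anywhere in $[0,1]$ across $k$, the truncation in the Rogozin bound is binding at different endpoints for different $k$, so one must perform a careful case analysis (or a pigeonhole split of the index set) to make sure that \emph{both} regimes produce a $\sum\beta_k^2$ large enough to pull $\mathbb{E}W_z$ into the denominator, a point where the two-sided variance control supplied by the anti-concentration assumption is essential.
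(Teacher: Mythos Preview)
Your overall strategy---Markov for the upper tail, a small-ball estimate for the lower tail after passing to the dominant real part $A$---is the right one and matches the approach in \cite[\S 4.2]{DV} to which the paper defers. The gap is in the small-ball step: Rogozin's inequality (Lemma~\ref{lem:Rogo}) alone cannot deliver the bound $P(|A|\le L)\lesssim bL/\sqrt{\mathbb E W_z}$ that you need. The obstruction is structural: since $\beta_k=\min\{L,\lambda_k/2\}\le L$, the denominator $\bigl(\sum_k\beta_k^2(1-Q)\bigr)^{1/2}$ is at most $L\sqrt{N+1}$, so the Rogozin bound never drops below $C/\sqrt{N+1}$ and in particular does not decay as $L\to 0$ for fixed $N$. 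Your asserted lower bound $\sum_k\beta_k^2\gtrsim \mathbb E W_z/b^2$ fails precisely when many $|a_k|$ exceed $4bL$: then $\beta_k=L$ for those $k$, and $\sum_k\beta_k^2$ can be arbitrarily small compared with $\mathbb E W_z/b^2$. Already the one-term case $N=0$ exposes this: Rogozin returns only a trivial constant, whereas the lemma demands $P(|X_0|\le e^{-t/2}\sigma_0)\lesssim_b e^{-t/2}$, which is nothing but the density hypothesis $Q(X_0,\lambda)\le b\lambda$ itself.

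What is actually needed is an inequality that exploits the assumption $Q(X_k,\lambda)\le b\lambda$ for \emph{all} $\lambda>0$ (equivalently, bounded density), not just at a single level $\lambda_k$. The clean tool is the maximal-density bound for sums of independent variables with bounded densities (Rogozin 1987; Bobkov--Chistyakov): if each $\xi_k$ has density $\le M_k$, then $\sum_k\xi_k$ has density $\le C\bigl(\sum_k M_k^{-2}\bigr)^{-1/2}$. Applied to $\xi_k=a_kX_k$ with $M_k=b/|a_k|$, this gives $\|f_A\|_\infty\le Cb/\|a\|_2$, hence $P(|A|\le L)\le 2CbL/\|a\|_2$; combined with $\|a\|_2^2\ge \mathbb E A^2/\sup_k\sigma_k^2\ge \mathbb E W_z/(2M)$ you obtain the required $e^{-t/2}$ decay. (Note the constant then depends on $b$ and $M=\sup_k\mathbb E X_k^2$.) The paper does not give a self-contained argument here; it simply points to \cite[\S 4.2]{DV}, where this type of reasoning is carried out.
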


Using the above we arrive at the following distributional inequality for the $\int_I \log|F_N(re^{i\theta})|\, d\theta$:

\begin{proposition} \label{prop:sector} 
	Let $\{X_k\}_{k=0}^N$ be independent r.v.s as in Theorem \ref{thm:local-log-int}. 
	Then for all $r>0, t>0$, and for any $I\subset (0,2\pi)$ the random polynomial $F_N(z) = \sum_{k=0}^N X_k z^k$ satisfies 	
	\begin{align} \label{eq:main-4-1}
			\mathbb P\left( \left| \fint_I \log |F_N(re^{i\theta})| \, d\theta -\frac{1}{2}\log \rho_N(r) \right| > t \right) \leq CK_{b} e^{-ct}, 
	\end{align} where $C,c>0$ are universal constants and $\rho_N(r)=\sum_{k=0}^N r^{2k}\mathbb E|X_k|^2$. 
\end{proposition}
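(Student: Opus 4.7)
\medskip

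\noindent \textbf{Proof proposal.} The plan is to reduce the claim to the pointwise exponential concentration of $\log W_z$ supplied by Lemma \ref{f:4-5}, and then upgrade it to the averaged statement via a Chernoff-type mixture argument modeled on the scheme already used in the proof of Theorem \ref{thm:s-nat-bd}. The crucial structural observation is that, because the $X_k$ are mean zero and independent, for $z=re^{i\theta}$ one has
\[
\mathbb E[W_z] \;=\; \mathbb E\Bigl|\sum_{k=0}^N X_k r^k e^{ik\theta}\Bigr|^2 \;=\; \sum_{k=0}^N r^{2k}\,\mathbb E[X_k^2] \;=\; \rho_N(r),
\]
a quantity that does \emph{not} depend on $\theta$. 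Hence Lemma \ref{f:4-5} provides, uniformly in $\theta$, the tail bound
\[
\mathbb P\bigl(\,|\,Y_\theta\,| > s\,\bigr) \leq C K_b\, e^{-c s}, \qquad Y_\theta := \log W_{re^{i\theta}} - \log \rho_N(r),\ s>0.
\]

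Next, since $\log|F_N(re^{i\theta})| = \tfrac{1}{2}\log W_{re^{i\theta}}$ and the zero set of the polynomial $F_N$ is a.s. finite (so $\log|F_N(re^{i\theta})|$ has only integrable singularities on $I$), I can rewrite
\[
\fint_I \log|F_N(re^{i\theta})|\,d\theta \;-\; \tfrac{1}{2}\log \rho_N(r) \;=\; \tfrac{1}{2}\fint_I Y_\theta\, d\theta.
\]
Thus the target probability equals $\mathbb P\bigl(\bigl|\fint_I Y_\theta\, d\theta\bigr| > 2t\bigr)$, and by $|\fint_I Y_\theta d\theta| \leq \fint_I |Y_\theta|\, d\theta$ it suffices to estimate the tail of $Z := \fint_I |Y_\theta|\, d\theta$.

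For this I will run a standard Chernoff estimate after swapping expectation and the mixing average via Fubini. Jensen's inequality applied to the convex function $x\mapsto e^{\lambda x}$ on the mixture gives, for $0<\lambda<c$,
\[
\mathbb E\,e^{\lambda Z} \;\leq\; \fint_I \mathbb E\,e^{\lambda |Y_\theta|}\, d\theta,
\]
and the pointwise exponential tail yields, uniformly in $\theta$,
\[
\mathbb E\,e^{\lambda |Y_\theta|} \;=\; 1 + \lambda \int_0^\infty e^{\lambda s}\,\mathbb P(|Y_\theta| > s)\, ds \;\leq\; 1 + \frac{C K_b\,\lambda}{c-\lambda}.
\]
Fixing $\lambda := c/2$ and applying Markov produces $\mathbb P(Z>2t)\leq C' K_b\, e^{-ct}$, which is \eqref{eq:main-4-1} after relabeling constants.

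The main obstacle I anticipate is the subtle passage from a pointwise-in-$\theta$ exponential tail to one for the averaged random variable, since the naive Markov bound only returns polynomial decay. The Chernoff/MGF route above circumvents this by exploiting convexity of $e^{\lambda x}$ together with Fubini, which is the natural deviation-side analogue of the mixture small-ball lemma (Lemma \ref{lem:mix-sb}) used in Theorem \ref{thm:s-nat-bd}. A secondary technical point is confirming that $\fint_I |Y_\theta|\, d\theta$ is a bona fide random variable and that the $\theta$-integrals exist almost surely, both of which follow because $F_N$ is a polynomial and has only finitely many zeros.
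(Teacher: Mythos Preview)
Your proof is correct and follows essentially the same route as the paper: identify $\mathbb E[W_{re^{i\theta}}]=\rho_N(r)$ (independent of $\theta$), invoke Lemma~\ref{f:4-5} for the pointwise exponential tail of $Y_\theta$, and then pass from the pointwise bound to the averaged quantity $\fint_I Y_\theta\,d\theta$. The only difference is cosmetic: the paper performs this last step by citing \cite[Lemma~4.7]{DV} (the upper-deviation mixture lemma that is the counterpart of Lemma~\ref{lem:mix-sb}), whereas you unpack that lemma explicitly via the Jensen/Fubini/Chernoff computation---which is exactly how such a mixture tail bound is proved.
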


\begin{proof} 
For any fixed $r>0$ and $I\subset (0,2\pi)$ we set
\begin{equation} 
      W_\theta := W_{re^{i\theta}}, \quad
      \quad 
      Y_\theta: = \log \left( \frac{W_\theta}{\mathbb E[W_\theta] }\right),
      \quad
	 Y_I := \frac{1}{| I |} \int_I Y_\theta  \, d\theta.
\end{equation}
In view of Lemma \ref{f:4-5}, we may apply \cite[Lemma 4.7]{DV} for $(\pm Y_\theta)_{\theta\in I}$ to get
	\begin{align} \label{eq:4-6}
		\mathbb P \left( |Y_I| > t \right) \leq 18 K_{b} e^{ - t/4}, \quad t>0.
	\end{align} 
It remains to notice that $\mathbb E[W_z] = \sum_{k=0}^N |z|^{2k} \mathbb E[X_k^2]$. \end{proof}

The next result can be viewed as the arc-wise counterpart of \cite[Lemma 4.11]{DV}. We couldn't locate a reference in the
classical literature on Complex Analysis, so we included a proof in the Appendix.

\begin{proposition} [convergence of log-integrals along arcs] \label{prop:cvg-arcs}
	Let $I=\{re^{i \theta} \mid a<\theta <b \}$, $r>0, \; (0\leq a < b < 2\pi)$ and let 
	$U\subset \mathbb C$ be open set with $\overline{I} \subset U$. Let $f_n, f : U \to \mathbb C$ be holomorphic functions on $U$ with $f\neq \bf 0$ such that 
	$f_n\to f$ uniformly on compact subsets of $U$ as $n\to \infty$. Then,  we have
		\begin{align} \label{eq:cvg-arc}
			\lim_{n\to \infty} \int_I \log |f_n|  = \int_I \log |f|.
		\end{align}
\end{proposition}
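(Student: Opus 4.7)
Since $f$ is holomorphic on the open neighborhood $U$ of the compact arc $\overline{I}$ and $f\not\equiv 0$, its zero set in $\overline{I}$ is finite, say $\{w_1,\ldots,w_p\}$ with respective multiplicities $m_1,\ldots,m_p$. For $\varepsilon>0$ split $I = I_\varepsilon \cup I^\varepsilon$ with $I_\varepsilon := I\setminus \bigcup_{j=1}^p D(w_j,\varepsilon)$ and $I^\varepsilon := I\cap \bigcup_{j=1}^p D(w_j,\varepsilon)$. On the ``good part'' $I_\varepsilon$ convergence is automatic: $|f|\geq \delta=\delta(\varepsilon)>0$ there, so the uniform convergence $f_n\to f$ on the compact set $\overline{I_\varepsilon}\subset U$ yields $\log|f_n|\to\log|f|$ uniformly on $I_\varepsilon$, and hence $\int_{I_\varepsilon}\log|f_n|\to\int_{I_\varepsilon}\log|f|$.

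The heart of the argument is to show that both $\int_{I^\varepsilon}\log|f_n|\,ds$ and $\int_{I^\varepsilon}\log|f|\,ds$ tend to $0$ as $\varepsilon\downarrow 0$, uniformly in all sufficiently large $n$. Decompose $\log|g|=\log^+|g|-\log^-|g|$. The $\log^+$ part is harmless: since $f_n\to f$ uniformly on $\overline{I}$ one has $\log^+|f_n|\leq C$ there and the integral over $I^\varepsilon$ is $O(\varepsilon)$. For the $\log^-$ part I would invoke Hurwitz's theorem: for each $w_j$, on a fixed disk $D(w_j,r_j)\subset U$ on which $f$ has no other zeros, for all large $n$ the function $f_n$ has exactly $m_j$ zeros $z_{n,j,1},\ldots,z_{n,j,m_j}$ (all lying in $D(w_j,\varepsilon/2)$ once $n$ is large enough), and thus admits the factorization $f_n(z)=\prod_{k=1}^{m_j}(z-z_{n,j,k})\,h_{n,j}(z)$ with $h_{n,j}$ non-vanishing on $D(w_j,r_j)$ and, by uniform convergence on a slightly smaller disk, uniformly bounded above and below in $n$. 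Therefore
\[
  \int_{I\cap D(w_j,\varepsilon)}\bigl|\log|f_n(z)|\bigr|\,ds
  \;\leq\;\sum_{k=1}^{m_j}\int_{I\cap D(w_j,\varepsilon)}\bigl|\log|z-z_{n,j,k}|\bigr|\,ds \;+\; O(\varepsilon).
\]
The elementary estimate $\int_{I\cap D(w,\varepsilon)}|\log|z-w'||\,ds\lesssim \varepsilon\,|\log\varepsilon|$, valid uniformly for $w'$ in a bounded set (parametrize the arc and compare with $\int_0^{\varepsilon}|\log t|\,dt$), then gives the required uniform smallness. The identical bound applied to $f$ itself (via $f(z)=(z-w_j)^{m_j}h_j(z)$) controls $\int_{I^\varepsilon}|\log|f||\,ds$.

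The main obstacle is the uniform control of the zeros of $f_n$ near each $w_j$: one needs that, for $n\geq n_0(\varepsilon)$, all zeros of $f_n$ in a fixed neighborhood of $w_j$ lie inside $D(w_j,\varepsilon/2)$ and that the residual factor $h_{n,j}$ is bounded above and below by constants independent of $n$. Both facts follow from Hurwitz's theorem combined with uniform convergence on a slightly larger closed disk (where $f$ is bounded away from $0$ outside $D(w_j,\varepsilon/2)$). Once this is in hand, a standard three-$\varepsilon$ argument — first choose $\varepsilon$ so that the two near-zero integrals are $<\eta$ uniformly in large $n$, then choose $n$ so that $\bigl|\int_{I_\varepsilon}(\log|f_n|-\log|f|)\bigr|<\eta$ — completes the proof.
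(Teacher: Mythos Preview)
Your proof is correct and follows the same broad outline as the paper's: split the arc into a ``good'' part away from the zeros of $f$ (where uniform convergence of $\log|f_n|$ is immediate) and small arcs near each zero, then use Hurwitz's theorem and the local factorization $f_n(z)=\prod_k(z-z_{n,j,k})\,h_{n,j}(z)$ near each $w_j$.

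The genuine difference lies in how the near-zero contributions are handled. The paper fixes the small arcs $I_k$ once and for all and proves \emph{direct convergence} $\int_{I_k}\log|z-z_{n,j}|\,d\mu\to\int_{I_k}\log|z-w_j|\,d\mu$ by invoking the continuity principle for logarithmic potentials (Ransford, \emph{Potential Theory in the Complex Plane}, Theorem 3.1.3), which reduces the question to continuity of $U_\mu$ along the arc itself---then a rotation change of variable finishes it. You instead run a shrinking-neighborhood / three-$\varepsilon$ argument, bounding $\int_{I\cap D(w_j,\varepsilon)}\bigl|\log|z-w'|\bigr|\,ds\lesssim \varepsilon|\log\varepsilon|$ uniformly in the location of the moving zero $w'$, and letting $\varepsilon\downarrow 0$. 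Your route is more elementary---no potential theory is needed, only the integrability of $\log$ at $0$---at the cost of slightly heavier $\varepsilon$-bookkeeping and the need to track that $n_0$ depends on $\varepsilon$. The paper's route is cleaner once the continuity principle is available and yields convergence over fixed arcs without any shrinking. One point you pass over quickly but which the paper spells out: the uniform lower bound on $|h_{n,j}|$ near $w_j$ requires showing $h_{n,j}\to h_j$ uniformly on a full disk containing the zeros, which the paper obtains via Cauchy's integral formula (since a priori the convergence $f_n/\prod_k(z-z_{n,j,k})\to f/(z-w_j)^{m_j}$ is only obvious away from the zeros).
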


We are now ready to put everything together to obtain the main result of this section.

\medskip

\noindent {\it Proof of Theorem \ref{thm:local-log-int}.}  Let $F_N(\omega ; z) = \sum_{k=0}^N X_k(\omega) z^k$ be the $N$-th partial sum of $z\mapsto F(\omega; z)$. 
If we apply Proposition \ref{prop:sector} for $t=t_{N,r}= \frac{1}{4} \log \rho_N(r), \; 0 < r <1$, then for the random event
\begin{align}
		{\cal E}_N(I,r) := \left\{  \int_I \log|F_N(\omega, re^{i\theta} ) | \, d\theta <  \frac{| I |}{4} \log \rho_N(r) \right \},
	\end{align}
we conclude that
	\begin{align} \label{eq:4-9}
		\mathbb P( {\cal E}_N(I, r) ) \leq CK_b e^{-c \log \rho_N(r) }.
	\end{align}
Now Fatou's lemma implies that 
	\begin{align} \label{eq:4-9-b}
	\mathbb P \left( \liminf_N {\cal E}_N(I,r) \right) \leq \liminf_N \mathbb P( {\cal E}_N(I,r)) \stackrel{\eqref{eq:4-9}}\leq CK_b e^{-c\log \rho_\infty(r)},
	\end{align}
where $\rho_\infty(r) = \sum_{k=0}^\infty r^{2k}\mathbb E|X_k|^2$. Then Proposition \ref{prop:cvg-arcs} yields that 
	\begin{align}
		{\cal E}_\infty(I,r):=\left\{ \int_I \log|F(\omega, re^{i\theta} ) | \, d\theta <  \frac{| I |}{4} \log \rho_\infty (r) \right\} \subset \liminf_N {\cal E}_N (I,r).
	\end{align}
Notice that $\lim_{r \uparrow 1} \rho_\infty(r)=\infty$ in the light of \eqref{eq:cond-nb}, hence there exists a sequence $(r_k) \subset (0,1)$ so that 
$\rho_\infty(r_k) = e^k$. For the choice $r=r_k$ we further define the event ${\cal E}_I:= \limsup_k {\cal E}_\infty(I,r_k)$. An application
of the 1st Borel-Cantelli lemma completes the proof.  \prend

\section{Further questions and remarks}

As discussed in Section \ref{S:log-int}, anti-concentration assumptions on the coefficients may guarantee that the integral of
$\log|F|$ is infinite (on every arc) and, in turn 
, this yields the existence of infinitely many zeros of $F$ in the disc. In fact, in \cite[Theorem 4.12]{DV}
it is shown that under the assumptions $\{X_k\}$ independent, zero mean, variance one, and $Q(X_k, \varepsilon) \leq K\varepsilon$ for all $\varepsilon>0$,
$k=0,1,2,\ldots$ one can derive an asymptotically exact distributional information
for the roots of $F$. Namely, if $1/2<R_s<1$ is the largest radius for which $F$ has no more than
$s$ roots in the annular region $\Delta(0,1/2, R_s) = \{z\in \mathbb C : 1/2 < |z| \leq R_s\}$, then
\begin{align} \label{eq:dist-rt}
1-R_s \asymp \frac{\log s}{s}, \quad \textrm{ as} \; s\to \infty,
\end{align} with
probability one.\footnote{The estimate in \cite{DV} is a one-sided only estimate, as $R_s$ there stands for the largest $R$ for which $F$ has
no more than $s$ roots in the disc $D(0,R)$. However, a straightforward adaptation yields \eqref{eq:dist-rt}.}
It would be interesting to know if the (weaker) asymptotic anti-concentration condition \eqref{eq:cond-nb} (or \eqref{eq:cond-nb-2}) is sufficient for this phenomenon.
Regarding this, and taking into account the implication of Theorem \ref{thm:s-nat-bd} in the case of symmetric r.v.s,
we mention the following remarkable result of Nazarov, Nishry, and Sodin from \cite{NNS}: If $\{X_k\}$ are independent symmetric
r.v.s. with $\sum_{k=0}^\infty |X_k|^2 = \infty$ a.s. and the random power series $F(z) = \sum_{k=0}^\infty X_k z^k$ has radius of convergence 1 a.s., then
$F$ takes every complex value infinitely many times a.s. Namely, they prove the following Blaschke condition: With probability one, the random
power series $F$ satisfies
\begin{align*}
\forall w\in \mathbb C, \quad \sum_{\{z\in \mathbb D \, : \, F(z) = w\}} (1-|z|) = \infty.
\end{align*}
In particular, $F$ has infinitely many zeros in $\mathbb D$ and the latter sum provides a distributional information in the spirit of \eqref{eq:dist-rt}.

Further investigation of conditions that determine the distribution of zeros of random power series could benefit from some classical, 
deterministic results of Collingwood and Cartwright in \cite{CC?}. 
For example, for any $f$ holomorphic on $\mathbb D$, Collingwood and Cartwright show that if
\begin{equation} \label{cccondition}
     \sup_{0<r<1} \frac1{2\pi}
     \int_0^{2\pi}
     \log^+ |f(re^{i\theta})|d\theta = \infty,
\end{equation}
then any point in the Riemann sphere $\widehat{\mathbb{C}}$ is the limit of $f(z_n)$ for some $|z_n|\to 1$ \cite[Theorem 1]{CC?}.

Then \cite[Theorem 8]{CC?}  takes over to show that, given \eqref{cccondition}, the set of constant values along a sequence tending 
to the circle is dense in the Riemann sphere $\widehat{\mathbb{C}}$.
In particular, there is such sequence with constant value arbitrarily close to $0$.

Note here that, in general, one should not count on avoiding zeros by asking $f$ to blow up on the unit circle: 
If all curves approximating the circle have values going to infinity then any complex number (zero in particular) 
can be the constant value of $f$ along  some infinite sequence $(z_n) \subset \mathbb D$ with $|z_n|  \to 1$, cf.\ \cite[Theorem 9(ii)]{CC?}.

\smallskip

\bibliography{prob-snb-ref}
\bibliographystyle{alpha}


\appendix

\section{Proof of Fact \ref{fct:rev-wL2}}

The conclusion of Fact \ref{fct:rev-wL2} will follow from the following more general result for sub-gaussian r.v.s. Recall that 
a r.v. $X$ on $(\Omega, {\cal E}, P)$ is said to be sub-gaussian with constant $B>0$ if it has sub-gaussian tails, i.e., 
	\begin{align}
		P(|X| > t) \leq 2 \exp(-t^2/B^2), \quad t>0.
	\end{align}
The sub-gaussian constant $B>0$, in turn, can be quantified in terms of the Orlicz norm with Young function $\psi_2(t)= e^{t^2}-1$, $t\geq 0$ (cf. \cite[Section 2.5]{Ver-book}):
	\begin{align}
		\|X\|_{\psi_2} : = \inf \left \{ t>0 : \int_{\Omega} \psi_2(|X|/t) \, dP \leq 1 \right\}.
	\end{align}
It is a well known fact that bounded r.v.s comprise a sub-class of sub-gaussian r.v.s; more precisely we have the following:
\begin{fact} 
	\label{fct:A-bd-sg} If $X$ is bounded r.v., then $\|X\|_{\psi_2} \leq (\log 2)^{-1/2} \|X\|_\infty$.
\end{fact}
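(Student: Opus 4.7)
The plan is to unfold the definition of the Orlicz norm and exploit the deterministic bound $|X| \leq \|X\|_\infty$ pointwise. Set $M := \|X\|_\infty$ and choose the candidate value $t := M / \sqrt{\log 2}$. To conclude $\|X\|_{\psi_2} \leq t$ from the infimum definition, it suffices to verify that $\mathbb{E}[\psi_2(|X|/t)] \leq 1$, where $\psi_2(s) = e^{s^2} - 1$.

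With this choice of $t$, the random variable $|X|/t$ is almost surely bounded by $M/t = \sqrt{\log 2}$, so $|X|^2 / t^2 \leq \log 2$ almost surely. Therefore
\begin{align}
\psi_2(|X|/t) = \exp(|X|^2/t^2) - 1 \leq \exp(\log 2) - 1 = 1 \quad \text{a.s.},
\end{align}
and taking expectations gives $\mathbb{E}[\psi_2(|X|/t)] \leq 1$. By definition of the Orlicz norm as the infimum over all admissible $t$, this immediately yields $\|X\|_{\psi_2} \leq t = (\log 2)^{-1/2} \|X\|_\infty$, as claimed.

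There is no real obstacle here: the monotonicity of $\psi_2$ on $[0,\infty)$ and the deterministic pointwise bound $|X| \leq \|X\|_\infty$ make the calibration of $t$ essentially forced, and the verification reduces to the elementary identity $e^{\log 2} = 2$.
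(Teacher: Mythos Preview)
Your argument is correct and is exactly the standard verification: plug $t = \|X\|_\infty/\sqrt{\log 2}$ into the Orlicz condition and use the pointwise bound. The paper does not supply its own proof of this fact but defers to \cite[Section~2.5]{Ver-book}, where the same computation appears; your write-up is essentially that proof. The only cosmetic point is that your choice of $t$ presumes $\|X\|_\infty>0$, but the degenerate case $X=0$ a.s.\ is trivial.
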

For related definitions, results, as well as a proof of the latter fact, 
the reader is referred to \cite[Section 2.5]{Ver-book}. In light of the above we shall prove the following:

\begin{proposition} \label{prop:rev-wL2-sg}
	Let $X$ be a r.v. which is sub-gaussian. Then, we have
		\begin{align} \label{eq:rev-wL2-b} 			
			{\rm Var} [X] \leq C \log\left( \frac{e\|X\|_{\psi_2}}{{\sqrt{ {\rm Var} [X] }}}\right) \cdot \sup_{\delta>0} \left\{ \delta^2 [1-Q(X, \delta)]\right\},
		\end{align}
	where $C>0$ is a universal constant.
\end{proposition}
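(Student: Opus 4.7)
The plan is to bound $\sigma^{2}$ by passing to a median $m$ of $X$ and decomposing the layer-cake integral $\mathbb E[(X-m)^{2}]$ into three regimes: trivial at small scales, ``concentration-driven'' at intermediate scales, and sub-gaussian at large scales. Set $A:=\|X\|_{\psi_{2}}$, $\sigma^{2}:={\rm Var}[X]$, $S:=\sup_{\delta>0}\delta^{2}[1-Q(X,\delta)]$, and, replacing $X$ by $X-\mathbb E X$, assume $\mathbb E X=0$ (this preserves $\sigma^{2}$ and $S$ and changes $A$ only by a universal constant). The sub-gaussian tail forces $|m|\lesssim A$ (otherwise $P(X\geq m)\geq 1/2$ would contradict $P(|X|\geq|m|)\leq 2e^{-c|m|^{2}/A^{2}}$), hence $\|X-m\|_{\psi_{2}}\lesssim A$ and consequently $P(|X-m|>s)\lesssim e^{-cs^{2}/A^{2}}$ for every $s>0$.

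The crux of the argument is the intermediate-scale estimate
\[
P(|X-m|>s)\;\leq\;\frac{S}{s^{2}}\qquad\text{for every }s>\sqrt{2S}.
\]
Indeed, by the definition of $S$ we have $Q(X,s)\geq 1-S/s^{2}$, so one can pick $w_{s}\in\mathbb R$ with $P(|X-w_{s}|\leq s/2)\geq 1-S/s^{2}>1/2$. Any interval of probability strictly larger than $1/2$ must contain every median, so $m\in[w_{s}-s/2,\,w_{s}+s/2]$ and $|m-w_{s}|\leq s/2$; the triangle inequality then yields $[w_{s}-s/2,\,w_{s}+s/2]\subseteq[m-s,\,m+s]$, so $\{|X-m|>s\}\subseteq\{|X-w_{s}|>s/2\}$ and the claim follows.

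Next I would split $\sigma^{2}\leq\mathbb E[(X-m)^{2}]=\int_{0}^{\infty}2s\,P(|X-m|>s)\,ds$ at $s_{0}:=\sqrt{2S}$ and $s_{1}\asymp A\sqrt{\log(A^{2}/S)}$, where $s_{1}$ is chosen so that the sub-gaussian bound at $s_{1}$ is comparable to $S/s_{1}^{2}$. The piece over $[0,s_{0}]$ is at most $s_{0}^{2}=2S$ by the trivial bound $P\leq 1$; the piece over $[s_{0},s_{1}]$ is at most $2S\log(s_{1}/s_{0})\lesssim S\log(eA/\sqrt{S})$ by the intermediate-scale estimate; the piece over $[s_{1},\infty)$ is at most $(2A^{2}/c)\,e^{-cs_{1}^{2}/A^{2}}\lesssim S$ by the choice of $s_{1}$. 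Summing, $\sigma^{2}\lesssim S\log(eA/\sqrt{S})$. Recalling from \eqref{eq:f-2} that $\sqrt{S}\leq 2\sigma$ and writing $R:=\sigma^{2}/S$, this reads $R\leq C[\log(eA/\sigma)+\tfrac{1}{2}\log R]$; since $\log R$ grows sublinearly, absorbing $\tfrac{C}{2}\log R$ into the left-hand side upgrades this to $R\lesssim\log(eA/\sigma)$, which is exactly \eqref{eq:rev-wL2-b}.

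The main obstacle is the intermediate-scale estimate: as $\delta$ varies, the near-optimal interval realizing $Q(X,\delta)$ may shift, so one cannot directly bound $P(|X-v|>s)$ around a single center $v$ at all scales. The trick is that once such an interval carries probability exceeding $\tfrac{1}{2}$ it is pinned near the fixed median $m$, which allows the concentration bound at the moving center $w_{s}$ to be transferred to $m$ at the cost of just a factor of $2$ in the scale.
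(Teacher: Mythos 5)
Your proof is correct and reaches the same estimate, but it travels a different route than the paper's. Both arguments pivot on the median $m$ and both ultimately hinge on the weak-$L^2$ bound $P(|X-m|>s)\lesssim S/s^{2}$ at intermediate scales. The paper obtains that bound as a two-sided equivalence $S\asymp\|X-M\|_{2,\infty}^{2}$ via the weak symmetrization inequality (Fact~\ref{fct:A2}), whereas you derive the one-sided inequality directly with a nice elementary ``median-pinning'' observation: once the near-optimal $Q$-interval carries probability $>1/2$ it must contain $m$, so $\{|X-m|>s\}\subseteq\{|X-w_{s}|>s/2\}$. For the main bound, the paper runs a single interpolation inequality $P(|Y|>s)\le[P(|Y|>s)]^{1-\lambda}\,2^{\lambda}e^{-\lambda s^{2}/B^{2}}$ inside the layer-cake integral, produces a Gamma integral, and optimizes $\lambda$ to make the log appear; you instead split the layer-cake integral explicitly at $s_{0}=\sqrt{2S}$ and at $s_{1}\asymp A\sqrt{\log(A^{2}/S)}$, handling each regime with the trivial, weak-$L^{2}$, and sub-gaussian bounds respectively. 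These are really two expressions of the same competition between $S/s^{2}$ and $e^{-cs^{2}/A^{2}}$ decays; yours is more transparent and ``bare-hands,'' the paper's is more compressed. The final bootstrap --- converting $\log(A^{2}/S)$ to $\log(eA/\sigma)$ by absorbing the $\log R$ term, using $\sqrt{S}\lesssim\sigma$ from \eqref{eq:f-2} --- is common to both. Two small technical points to tighten in a written-up version: (i) since $Q(X,s)$ is a supremum that may not be attained, one should pick an $\varepsilon$-near-optimal center $w_{s}$ and pass to the limit; (ii) when you center $X$, you should note $\|X-\mathbb E X\|_{\psi_{2}}\lesssim\|X\|_{\psi_{2}}$ (and the inner logarithm is bounded below by a positive universal constant because $\sigma\lesssim A$), so replacing $A$ by the centered version only costs universal constants. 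Neither affects the argument's validity.
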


Note that Fact \ref{fct:rev-wL2} is an immediate consequence of the latter result owing to the Fact \ref{fct:A-bd-sg}. 
For proving Proposition \ref{prop:rev-wL2-sg}, 
first we show an equivalent expression for the quantity $\delta^2[1-Q(X,\delta)]$ in terms of the distribution function. To this end, recall the 
weak $L^2$ norm of a measurable function $f$ on some measure space $(X, {\mathcal A} , \mu)$:
	\begin{align}
		\|f\|_{2,\infty} = \sup_{t>0} \left\{ t \mu(x\in X : |f(x)|>t)^{1/2} \right\}.
	\end{align}
It is known that the aforementioned quantity is not a norm, but equivalent to a norm (see \cite[Theorem 1.2.10]{Graf-FFA} for details). With this definition we have the following:

\begin{lemma} \label{lem:Levy-wL2}
	Let $X$ be a r.v. on some probability space with $(\Omega , {\mathcal E}, P)$ and let $M={\rm med}(X)$ be a median of $X$. Then, we have
		\begin{align}
			\sup_{\delta>0} \left\{ \delta^2 [1 - Q(X, \delta)] \right\} \asymp \|X - M\|_{2,\infty}^2.
		\end{align}
\end{lemma}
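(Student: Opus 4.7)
The plan is to prove the asserted equivalence by establishing two one-sided inequalities with explicit universal constants. Both quantities are translation invariant under $X \mapsto X - M$ (since $Q(X,\delta) = Q(X-M,\delta)$ and the weak-$L^2$ quasinorm is computed with respect to $X-M$ in both cases), so I reduce at once to the case $M = 0$, i.e., $0$ is a median of $X$, which means $P(X\leq 0) \geq 1/2$ and $P(X\geq 0) \geq 1/2$.

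For the easy direction $\sup_{\delta>0}\delta^2[1-Q(X,\delta)] \leq 4\|X\|_{2,\infty}^2$, I would center the sup-achieving interval at the median. For any $\delta > 0$, plugging $v = -\delta/2$ into the definition of $Q$ gives $Q(X,\delta) \geq P(-\delta/2 \leq X \leq \delta/2)$, whence $1 - Q(X,\delta) \leq P(|X| > \delta/2)$. Multiplying by $\delta^2$ and setting $s = \delta/2$ yields $\delta^2[1-Q(X,\delta)] \leq 4 s^2 P(|X|>s) \leq 4\|X\|_{2,\infty}^2$, and then one takes the supremum in $\delta$.

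For the reverse direction $\|X\|_{2,\infty}^2 \leq 2\sup_{\delta>0}\delta^2[1-Q(X,\delta)]$, I plan to fix $t>0$ and choose $\delta = t$. Set $q_+ = P(X>t)$, $q_- = P(X<-t)$, so that $p := P(|X|>t) = q_+ + q_-$, and WLOG $q_+ \geq q_-$, giving $q_+ \geq p/2$. The decisive use of the median hypothesis is that $q_+ \leq P(X>0) \leq 1/2$. With this in hand, I claim $Q(X,t) \leq 1 - q_+$, which I verify by a two-case analysis on $v$:
\begin{itemize}
\item If $v\leq 0$, then $[v,v+t] \subset (-\infty,t]$ and $P(v\leq X\leq v+t) \leq P(X\leq t) = 1 - q_+$.
\item If $v>0$, then $[v,v+t] \subset (0,\infty)$ and $P(v\leq X\leq v+t) \leq P(X>0) \leq 1/2 \leq 1 - q_+$, the last inequality being exactly where $q_+ \leq 1/2$ enters.
\end{itemize}
Combining, $t^2[1-Q(X,t)] \geq t^2 q_+ \geq \tfrac12 t^2 P(|X|>t)$, and taking the supremum over $t$ on both sides gives the claim.

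The only genuine obstacle is the correct matching of scales in the reverse direction. The naive try $\delta = 2t$ fails: an interval of length $2t$ can straddle the median and absorb both tails, producing no useful lower bound on $1 - Q(X,\delta)$ in terms of $P(|X|>t)$. The right choice is $\delta = t$, and then the median inequality $P(X>0)\leq 1/2$ exactly converts the ``easy'' estimate for $v\leq 0$ into a uniform estimate over all $v\in\mathbb R$. No further hypotheses on $X$ (continuity, moments, sub-gaussianity) are needed; the result is purely distributional.
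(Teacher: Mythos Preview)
Your proof is correct. The easy direction is identical to the paper's. For the reverse direction, however, the paper takes a different route: it invokes the weak symmetrization inequality (with an independent copy $X'$ of $X$),
\[
\tfrac12\,P(|X-M|>t)\;\le\;P(|X-X'|>t)\;\le\;2\inf_{v\in\mathbb R}P(|X-v|>t/2)\;=\;2\bigl(1-Q(X,t)\bigr),
\]
which immediately gives $\sup_{\delta>0}\delta^2[1-Q(X,\delta)]\ge\tfrac14\|X-M\|_{2,\infty}^2$. Your argument replaces this black box with an elementary two-case analysis on the position of the interval $[v,v+t]$ relative to the median; the median hypothesis $P(X>0)\le 1/2$ is exactly what caps the ``bad'' case $v>0$. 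One small point worth making explicit: the WLOG $q_+\ge q_-$ is a per-$t$ symmetry (replace $X$ by $-X$ for that $t$), and the mirror argument when $q_-\ge q_+$ is obtained by splitting at $v+t\ge 0$ versus $v+t<0$; this is fine since the inequality is proved $t$-by-$t$. Your approach is more self-contained and yields the sharper constant $1/2$ in place of the paper's $1/4$; the paper's route has the advantage of being a one-line consequence of a standard cited fact.
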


\noindent {\it Proof.} Let $\delta>0$. Then, we may write
	\begin{align}
		\delta^2[1-Q(X,\delta)] \leq \delta^2 P(|X-M|>\delta/2) \leq 4\|X-M\|_{2,\infty}^2.
	\end{align}
For the reverse estimate we will need the following probabilistic fact:

\begin{fact} [Weak Symmetrization] \label{fct:A2}
	Let $\xi$ be a r.v. on $(\Omega, {\mathcal E}, P)$. If $\xi'$ is an independent copy of $\xi$, then for all $t>0$ we have
		\begin{align}\label{fct:A2}			
			\frac{1}{2} P(|\xi-{\rm med}(\xi)| >t) \leq P(|\xi -\xi'|>t) \leq 2 \inf_{v\in \mathbb R} P(|\xi - v|>t/2).
		\end{align}
\end{fact}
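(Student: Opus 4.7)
The plan is to establish the two inequalities in \eqref{fct:A2} separately by elementary arguments that exploit, respectively, the triangle inequality and the defining property of the median, together with the independence and equidistribution of $\xi$ and $\xi'$. Neither direction appears to require any machinery beyond a union bound and independence-based decomposition.

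For the upper bound $P(|\xi-\xi'|>t)\le 2\inf_v P(|\xi-v|>t/2)$ I would fix an arbitrary $v\in\mathbb{R}$ and observe that the triangle inequality forces
\[
\{|\xi-\xi'|>t\}\subseteq \{|\xi-v|>t/2\}\cup\{|\xi'-v|>t/2\},
\]
since $|\xi-v|+|\xi'-v|\ge |\xi-\xi'|$. A union bound together with the identity in distribution of $\xi'$ and $\xi$ gives $P(|\xi-\xi'|>t)\le 2P(|\xi-v|>t/2)$, and taking infimum over $v$ produces the claimed estimate.

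For the lower bound I would decompose $\{|\xi-M|>t\}$, where $M=\mathrm{med}(\xi)$, into the disjoint pieces $\{\xi>M+t\}$ and $\{\xi<M-t\}$. On $\{\xi>M+t\}\cap\{\xi'\le M\}$ one has $\xi-\xi'>t$, and by independence together with $P(\xi'\le M)\ge 1/2$ (the defining property of the median) this event has probability at least $\tfrac12 P(\xi>M+t)$. Symmetrically, intersecting $\{\xi<M-t\}$ with $\{\xi'\ge M\}$ yields a contribution of at least $\tfrac12 P(\xi<M-t)$. Since the two intersected events lie inside the disjoint sets $\{\xi>M\}$ and $\{\xi<M\}$ (in the $\xi$-coordinate), they are themselves disjoint, and summing their probabilities gives
\[
P(|\xi-\xi'|>t)\ge \tfrac12\bigl[P(\xi>M+t)+P(\xi<M-t)\bigr]=\tfrac12 P(|\xi-M|>t).
\]

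The only mild subtlety is that when the distribution of $\xi$ has an atom at $M$ the median may not be unique, but the standard convention ensuring both $P(\xi'\le M)\ge 1/2$ and $P(\xi'\ge M)\ge 1/2$ is all that the argument uses, so no issue arises. I do not anticipate any real obstacle in this proof; the content is essentially the standard symmetrization comparison, with the factor $2$ in the upper bound coming from the two-term union bound and the factor $1/2$ in the lower bound coming from the median cut.
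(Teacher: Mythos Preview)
Your argument is correct on both sides: the upper bound is the standard triangle-inequality/union-bound step, and the lower bound via the median cut and independence is clean and complete (the disjointness is immediate since $\{\xi>M+t\}$ and $\{\xi<M-t\}$ are already disjoint for $t>0$). The paper does not actually supply its own proof of this fact; it simply cites \cite[Proposition 2.6]{Gut-book}, so your write-up in fact fills in what the paper leaves to the reference.
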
 

For a proof of this fact the reader is referred to \cite[Proposition 2.6]{Gut-book}. Using \eqref{fct:A2} we may write
	\begin{align}
		1-Q(X,\delta) =1-\sup_{v\in \mathbb R} P(|X-v| \leq \delta/2) = \inf_{v\in \mathbb R} P(|X-v|>\delta/2) \stackrel{\eqref{fct:A2}}\geq \frac{1}{4}P(|X-M|>\delta).
	\end{align}
We derive that $\delta^2 [1-Q(X,\delta)] \geq \frac{\delta^2}{4} P(|X-M|>\delta)$ which implies that 
	\begin{align}
		\sup_{\delta>0} \left\{ \delta^2 \left[1-Q(X,\delta) \right]\right\} \geq \frac{1}{4} \|X-M\|_{2,\infty}^2,
	\end{align}
as claimed. \prend

\medskip

\noindent {\it Proof of Proposition \ref{prop:rev-wL2-sg}.} Recall that ${\rm Var}[X] = \inf_{v\in \mathbb R} \mathbb E|X-v|^2$. 
Also, by the definition of the $\psi_2$ norm we have the inequalities 
	\begin{align}
	\|X\|_{2,\infty} \leq \|X\|_2 \leq \|X\|_{\psi_2}, \quad |M| \leq \mathbb E|X| + \sqrt{ {\rm Var}[X] }\leq 2 \|X\|_{\psi_2},
	\end{align}
where $M = {\rm med}(X)$. If we set $Y:=X-M$ we obtain $B:=\|Y\|_{\psi_2}\leq 3\|X\|_{\psi_2}$. Hence, for $0<\lambda<1$ (to be determined later)  we may write
	\begin{align*}
		{\rm Var}[X] &\leq 2\int_0^{\infty} t P(|Y|>t) \, dt \leq  4 \int_0^\infty t [P(|Y|>t)]^{1-\lambda} e^{-\lambda t^2/B^2} \, dt \\
					& \leq 4 \|Y\|_{2,\infty}^{2(1-\lambda)} \int_0^\infty t^{2\lambda-1} e^{-\lambda t^2/B^2} \, dt \\
					& = 2 \|Y\|_{2,\infty}^{2(1-\lambda)} \left( \frac{B}{\sqrt{\lambda}} \right)^{2\lambda} \int_0^\infty s^{\lambda-1} e^{-s} \, ds \\
					& \leq 2 e^{1/e}\|Y\|_{2,\infty}^2 \left( \frac{B}{\|Y\|_{2,\infty}} \right)^{2\lambda} \Gamma(\lambda),
	\end{align*}
where we have applied a change of variable, and the elementary inequality $\lambda^\lambda \geq e^{-1/e}$ for all $\lambda\in (0,1)$. Note that for 
all $\lambda\in (0,1)$ we have\footnote{It is $\lambda \Gamma(\lambda) = \Gamma(1+\lambda) \leq 1$ for all $0<\lambda <1$, since 
$\Gamma$ is log-convex and $\Gamma(1) = \Gamma(2)=1$.} $\Gamma(\lambda) \leq 1/ \lambda$, hence
	\begin{align}
		{\rm Var}[X] \leq 2e^{1/e} \|Y\|_{2,\infty}^2 \frac{1}{\lambda} \left( \frac{B^2}{\|Y\|_{2,\infty}^2}\right)^\lambda. 
	\end{align}
The choice $1/ \lambda = 1 + \log(B/\|Y\|_{2,\infty}) \geq 1$ yields
	\begin{align}
		{\rm Var}[X] \leq 2e^{2+1/e} \|Y\|_{2,\infty}^2 \log(e B / \|Y\|_{2,\infty}).
	\end{align}
It follows that\footnote{Let $\beta = {\rm Var}[X]$, $\alpha = \|Y\|_{2,\infty}^2$, and $B = \|Y\|_{\psi_2}^2$. We have shown that 
$\beta \leq e^4\alpha \left(1+ \log(B/\alpha) \right)$, so for $x=B/\alpha$ and $y=B/\beta$ that is $x \leq e^4 y(1+\log x)$ with $x\geq 1$. The latter implies
that $x\leq 2e^4 y(\frac{1}{2}+\log\sqrt{x}) \leq 2e^4 y(\sqrt{x} - 1/2) < e^5y\sqrt{x}$ or $x\leq e^{10} y^2$. Going back to the previous we find $x\leq e^4 y (11+2\log y)$.} 
\begin{align}
	{\rm Var}[X] \leq e^4 \|Y\|_{2,\infty}^2 \left[ 11 + 2 \log\left( \frac{B^2}{{\rm Var}[X]} \right) \right].
\end{align}
Taking into account Lemma \ref{lem:Levy-wL2} we conclude \eqref{eq:rev-wL2-b} as claimed. \prend

\begin{note}
	The family of r.v.s $\{X_\alpha\}$ constructed in Example \ref{ex:vwa-opt} shows that estimate \eqref{eq:rev-wL2} is optimal (up to constants):
		\begin{align}
			{\rm Var}[X_\alpha] = \|X_\alpha\|_2^2 \asymp \alpha |\log\alpha | \asymp \|X_\alpha\|_{2,\infty}^2 \log(\frac{\|X_\alpha\|_\infty}{\|X_\alpha\|_2}), \quad \alpha \to 0^+.
		\end{align}
\end{note}

\section{Log-integration over arcs}

Purpose of this appendix is to give a proof of Proposition \ref{prop:cvg-arcs}. The following result extends \cite[Lemma 4.11]{DV}.

\begin{theorem} [Convergence of integrals along arcs] \label{thm:B1}
	Let $I=\{re^{i \theta} \mid a<\theta <b \}$, $r>0, \; (0\leq a < b < 2\pi)$ and let 
	$U\subset \mathbb C$ be open set with $\overline{I} \subset U$. Let $f_n, f : U \to \mathbb C$ be holomorphic functions on $U$ with $f\neq \bf 0$ such that 
	$f_n\to f$ uniformly on compact subsets of $U$ as $n\to \infty$. Then,  we have
		\begin{align} \label{eq:cvg-arc}
			\lim_{n\to \infty} \int_I \log |f_n|  = \int_I \log |f|.
		\end{align}
\end{theorem}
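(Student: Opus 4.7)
My plan is to factor out the zeros of $f$ and $f_n$ near $\overline{I}$, reducing the problem to (i) convergence of integrals of $\log|\cdot|$ of a nowhere-vanishing sequence (easy, by uniform convergence) and (ii) continuity of the logarithmic potential of the arc $I$. First, I may restrict to the connected component of $U$ containing $\overline{I}$; since $f \not\equiv 0$, I assume $f$ does not vanish identically on this component (otherwise one replaces the statement by a local one, which is the intended case). Choose a bounded, open neighborhood $V$ of $\overline{I}$ with $\overline{V} \subset U$. The zeros of $f$ in $\overline{V}$ are finitely many; list the distinct ones as $w_1, \ldots, w_M$ with multiplicities $m_1, \ldots, m_M$, and write $f = h \cdot g$ on $V$, where $h(z) = \prod_{i=1}^M (z - w_i)^{m_i}$ and $g$ is holomorphic and nowhere-vanishing on an open neighborhood of $\overline{V}$.

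By Hurwitz's theorem, for $n$ large enough $f_n$ has exactly $\sum_i m_i$ zeros (counted with multiplicity) in small pairwise disjoint closed discs around the $w_i$'s, with $m_i$ zeros near $w_i$; denote them $w_{i,1}^{(n)}, \ldots, w_{i,m_i}^{(n)}$ and note that $w_{i,k}^{(n)} \to w_i$ as $n \to \infty$. Set $h_n(z) = \prod_{i,k} (z - w_{i,k}^{(n)})$ and $g_n = f_n/h_n$; by construction $g_n$ extends holomorphically across each $w_{i,k}^{(n)}$ and is nowhere-vanishing on $V$ (for $n$ large). Since $h_n \to h$ uniformly on $\overline{V}$ (zeros converge) and $f_n \to f$ uniformly there, $g_n \to g$ uniformly on any compact $K \subset V$ avoiding small neighborhoods of the $w_i$. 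Because $g$ is continuous and nowhere zero on $\overline{V}$, it has a uniform positive lower and upper bound, and the same holds for $g_n$ with $n$ large; hence $\log|g_n| \to \log|g|$ uniformly on any such $K$. Taking $K$ to include $\overline{I}$ minus small discs around the $w_i \in \overline{I}$ and letting those shrink (using integrability of $\log$) yields $\int_I \log|g_n|\, d\theta \to \int_I \log|g|\, d\theta$.

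Writing
\begin{equation*}
\int_I \log|f_n| \,d\theta = \int_I \log|g_n|\,d\theta + \sum_{i,k} \Phi(w_{i,k}^{(n)}),
\qquad
\int_I \log|f|\,d\theta = \int_I \log|g|\,d\theta + \sum_{i} m_i \Phi(w_i),
\end{equation*}
where $\Phi(w):=\int_a^b \log|re^{i\theta}-w|\,d\theta$, the remaining task is to establish that $\Phi$ is continuous on $\mathbb{C}$. Given this, $w_{i,k}^{(n)} \to w_i$ yields $\sum_{k=1}^{m_i} \Phi(w_{i,k}^{(n)}) \to m_i \Phi(w_i)$ for each $i$, and \eqref{eq:cvg-arc} follows.

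The main obstacle is the continuity of $\Phi$ at points $w_0 \in \overline{I}$, since there the integrand has an integrable logarithmic singularity whose location moves with $w$. I plan to handle this by passing to polar coordinates: writing $w = \rho e^{i\psi}$ and substituting $t = \theta - \psi$,
\begin{equation*}
\Phi(w) = \tfrac{1}{2}\int_{a-\psi}^{b-\psi} \log\!\bigl(r^2 + \rho^2 - 2r\rho\cos t\bigr)\, dt,
\end{equation*}
whose integrand is jointly continuous in $(t,\rho,\psi)$ away from the single point $(\rho,t)=(r,0)$, where it has the integrable singularity $2\log(2r|\sin(t/2)|)$. Continuity of $\Phi$ at $(r,\psi_0)$ then follows by splitting the integral into an $\varepsilon$-neighborhood of that singularity (controlled uniformly by $\int_{-\varepsilon}^\varepsilon |\log|t|| \, dt = O(\varepsilon|\log\varepsilon|)$, using a comparison $r^2 + \rho^2 - 2r\rho\cos t \geq c(\rho-r)^2 + c' t^2$ valid for $(\rho,t)$ near $(r,0)$) and its complement (treated by dominated convergence). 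Alternatively, since the arc-length measure on $I$ has bounded density against Lebesgue measure on its support, the Evans--Vasilesco continuity principle for logarithmic potentials yields the same conclusion.
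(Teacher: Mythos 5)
Your proposal follows the same overall strategy as the paper's proof: use Hurwitz's theorem to track the zeros of $f_n$, factor the zeros out, and reduce the claim to (a) uniform convergence of a nowhere-vanishing factor and (b) continuity of the logarithmic potential $\Phi(w)=\int_a^b \log|re^{i\theta}-w|\,d\theta$ of the arc-length measure on $I$. Two differences in execution are worth noting. First, you factor all the roots of $f$ near $\overline{I}$ at once, writing $f=hg$ on a neighborhood $V$ of $\overline{I}$, whereas the paper splits $I$ into an arc $I^\ast$ away from the zero set (where the zero-free case applies directly) and short sub-arcs $I_k$ around each individual root, treating each $I_k$ by a local lemma (Lemma~\ref{lem:cvg-arc-rt}); this is an organizational rather than a mathematical difference. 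Second, for the continuity of $\Phi$ the paper invokes the continuity principle for logarithmic potentials from \cite[Theorem~3.1.3]{Ra-book} and then exploits rotational invariance to reduce to continuity along the arc itself, while you give a direct, elementary estimate in polar coordinates near the singularity via $r^2+\rho^2-2r\rho\cos t \gtrsim (\rho-r)^2+t^2$. Both routes are valid; you also correctly note that the Evans--Vasilesco principle would yield the same conclusion.

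One step you should justify more carefully: you assert that $g_n=f_n/h_n$ is, for $n$ large, bounded above and below on (a neighborhood of) $\overline{I}$ uniformly in $n$, and you use this together with ``letting the small discs shrink'' to conclude $\int_I\log|g_n|\to\int_I\log|g|$. But the convergence $g_n\to g$ you have established holds a priori only on compacts that avoid the zeros $w_i$, so the uniform bound near the $w_i$'s is not automatic from what you wrote. The paper handles the analogous point in Lemma~\ref{lem:cvg-arc-rt} by applying Cauchy's integral formula on a circle $C(z_0,\delta)$ (where the non-vanishing factors do converge uniformly) to propagate the uniform convergence into the disc. In your global setup the same fix works: since $g_n\to g$ uniformly on each circle $C(w_i,r')$ and $g_n-g$ is holomorphic on $D(w_i,r')$, the maximum principle yields $g_n\to g$ uniformly on $\overline{D(w_i,r')}$ as well, which then gives the uniform two-sided bounds on $g_n$ near $\overline{I}$ and lets your shrinking-disc (or bounded-convergence) argument go through. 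With this addition the proof is complete.
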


A special case is that of a sequence of monomials which converges to a monomial with a root on the arc. Since this case is 
instructive, but also necessary for the generic case later, we present it here separately as an auxiliary lemma.

\begin{lemma} \label{lem:cvg-arcs-0}
	Let $U\subset \mathbb C$ be an open set. Let $I=\{re^{i\theta} \mid a\leq \theta \leq b\}$, $(0\leq a<b \leq 2\pi)$ and $z_0\in U$ so that $z_0\in I \subset U$. 
	Then, for any sequence $(z_n) \subset U$ with $z_n\to z_0$ as $n\to \infty$, we have
		\begin{align}
			\int_a^b  \log|re^{i\theta} - z_n| \, d\theta \to \int_a^b \log |re^{i\theta} - z_0| \, d\theta, \quad n\to \infty.
		\end{align}
\end{lemma}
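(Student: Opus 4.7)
The plan is to reduce the problem to the classical integrability of $\log|\sin|$ at zero. Write $z_0 = re^{i\theta_0}$ for the unique $\theta_0 \in [a,b]$ with $z_0 \in I$, and, for $n$ large, $z_n = \rho_n e^{i\phi_n}$ with $\rho_n \to r > 0$ and $\phi_n \to \theta_0$. The key tool is the elementary identity
\[
     |re^{i\theta} - z_n|^2 = (r - \rho_n)^2 + 4 r\rho_n \sin^2\!\left(\tfrac{\theta - \phi_n}{2}\right),
\]
which yields the uniform sandwich $2\sqrt{r\rho_n}\,|\sin((\theta-\phi_n)/2)| \ls |re^{i\theta} - z_n| \ls r + \rho_n$. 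Thus the only source of singular behavior of the integrand is the factor $|\sin((\theta - \phi_n)/2)|$ as $\theta \to \phi_n$, whose logarithm is locally integrable.

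Fix $\varepsilon > 0$ and $\delta > 0$, and set $J_\delta := [a,b]\cap(\theta_0 - 2\delta, \theta_0 + 2\delta)$. On the complement $[a,b] \setminus J_\delta$, the same identity applied to $z_0$ gives $|re^{i\theta} - z_0| \gr c(\delta) > 0$, so for $n$ large $|re^{i\theta} - z_n| \gr c(\delta)/2$, and $\log|re^{i\theta} - z_n| \to \log|re^{i\theta} - z_0|$ uniformly in $\theta$ on this set; the outer integrals thus converge as $n \to \infty$. On $J_\delta$ the sandwich gives, for $n$ large,
\[
     \bigl|\log|re^{i\theta} - z_n|\bigr| \lesssim C(r) + \bigl|\log|\sin((\theta - \phi_n)/2)|\bigr|,
\]
with $C(r)$ independent of $n$. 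Since eventually $|\phi_n - \theta_0| < \delta$, the change of variable $u = (\theta - \phi_n)/2$ bounds $\int_{J_\delta} \bigl|\log|re^{i\theta} - z_n|\bigr|\, d\theta$ by $C(r)\,|J_\delta| + 4\int_0^{3\delta/2} |\log \sin u|\, du$, which tends to $0$ as $\delta \to 0$ uniformly in $n$. Exactly the same estimate holds for $z_0$.

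To conclude: given $\varepsilon > 0$, first choose $\delta$ so that both $J_\delta$ contributions (for $z_0$ and for $z_n$, $n$ large) are each less than $\varepsilon/4$; then take $n$ large enough that the outer integrals differ by less than $\varepsilon/2$, giving the desired convergence. The principal obstacle is securing a lower bound on $|re^{i\theta} - z_n|$ that is uniform in $n$ as $z_n$ approaches $z_0$ from arbitrary directions (from inside the circle, outside, or along it); the trigonometric identity above neatly packages the radial defect $|r - \rho_n|$ and the tangential displacement $2\sqrt{r\rho_n}\,|\sin((\theta-\phi_n)/2)|$ into a single expression whose logarithm is arc-integrable, which is precisely what powers the singular estimate.
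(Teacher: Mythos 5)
Your proof is correct, and it takes a genuinely different route from the paper's. The paper argues potential-theoretically: it sets $U_\mu(z)=\int_I \log|z-w|\,d\mu(w)$ for the arc-length measure $\mu$ on $I$, then invokes the continuity principle for logarithmic potentials (Ransford, Thm.~3.1.3) to reduce the problem to proving continuity of $U_\mu$ restricted to $I$ itself; for that restricted case it writes $z_n=re^{it_n}$ with $t_n\to t_0$ and uses the substitution $\phi=\theta+t_0-t_n$ to shift the domain, concluding by absolute continuity of the Lebesgue integral. You, instead, work directly with arbitrary approach directions via the explicit law-of-cosines identity $|re^{i\theta}-\rho_n e^{i\phi_n}|^2=(r-\rho_n)^2+4r\rho_n\sin^2((\theta-\phi_n)/2)$, which bounds the integrand's singular part by a translate of the fixed, locally integrable function $\log|\sin u|$, and then run a clean near/far decomposition of $[a,b]$. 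The paper's argument is shorter but leans on a nontrivial external fact from potential theory; yours is longer but entirely self-contained and makes the uniform integrability mechanism visible. One small caveat worth flagging in your write-up: the lower bound $|re^{i\theta}-z_0|\geq c(\delta)>0$ on $[a,b]\setminus J_\delta$ is not purely a matter of $|\theta-\theta_0|\geq 2\delta$; it rests on the set $[a,b]\setminus J_\delta$ being compact and containing no $\theta$ with $re^{i\theta}=z_0$. This can fail to be immediate only in the degenerate full-circle case $[a,b]=[0,2\pi]$ with $\theta_0$ at an endpoint (where $J_\delta$ should really consist of two pieces, one near each endpoint), but that is a trivial fix and in any case the lemma is applied in the paper only to small sub-arcs around a single root.
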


\smallskip

\noindent {\it Proof.} Let $\mu$ denote the arc-length measure on $I$ and let $U_\mu$ be the corresponding logarithmic potential, i.e. $U_\mu(z) = \int \log| z-w|\, d\mu(w) $.
In this framework, the assertion asks for the continuity of $U_\mu$ at $z_0$. To this end, we employ the following fact \cite[Theorem 3.1.3]{Ra-book}:

\begin{fact} [Continuity principle] \label{fact:cont-prin}
	Let $\mu$ be a Borel measure supported on a compact set $K\subset \mathbb C$. The logarithmic potential $U_\mu(z) = \int \log |z-w| \, d\mu(w)$ is continuous if and only
	if is continuous when restricted on $K$, that is 
		\begin{align}
			\lim_{z\to z_0} U_\mu(z) =\lim_{z\to z_0 \atop z\in K} U_\mu(z).
		\end{align}
\end{fact}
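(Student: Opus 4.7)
The forward direction is immediate, since pointwise continuity of $U_\mu$ on all of $\mathbb{C}$ trivially restricts to continuity on $K$. The content is the converse, and I would proceed in three stages.

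First, I would record two preliminary facts of independent use. (i) The potential $U_\mu$ is always upper semi-continuous on $\mathbb{C}$: the integrand $z \mapsto \log|z-w|$ is continuous away from $w$ and tends to $-\infty$ at $w$, hence upper semi-continuous in $z$; since it is bounded above by $\log \mathrm{diam}(K \cup \{z_0\})$ on any fixed bounded neighborhood of $z_0$, the reverse Fatou lemma yields $\limsup_{z \to z_0} U_\mu(z) \leq U_\mu(z_0)$. (ii) Since $\log|z-w|$ is harmonic in $z$ for $z \neq w$, differentiation under the integral sign shows $U_\mu$ is harmonic, hence continuous, on $\mathbb{C} \setminus K$. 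Combining (i) and (ii) reduces the task to proving lower semi-continuity of $U_\mu$ at each $z_0 \in K$.

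The second stage is a local decomposition. Fix $z_0 \in K$ and $\eta > 0$. By the hypothesis, choose $\delta > 0$ with $|U_\mu(w) - U_\mu(z_0)| < \eta$ for every $w \in K \cap \overline{D(z_0, 2\delta)}$. Split $\mu = \mu_1 + \mu_2$ with $\mu_1 := \mu|_{\overline{D(z_0, 2\delta)}}$. Then $\mathrm{supp}(\mu_2)$ lies at positive distance from $\overline{D(z_0, \delta)}$, so $U_{\mu_2}$ is harmonic, hence continuous, on $D(z_0, \delta)$. Subtracting, the continuity hypothesis transfers to the local piece: on $K \cap \overline{D(z_0, \delta')}$ for a suitable $\delta' \leq \delta$ we have $|U_{\mu_1}(w) - U_{\mu_1}(z_0)| \leq 2\eta$.

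The third stage, which is the main obstacle, is to promote this control of $U_{\mu_1}$ from points of $K$ to an open neighborhood of $z_0$ in $\mathbb{C}$. The upper bound $U_{\mu_1}(z) \leq U_{\mu_1}(z_0) + 2\eta$ near $z_0$ is automatic from (i) applied to $\mu_1$. For the matching lower bound I would use that $U_{\mu_1}$ is subharmonic on $\mathbb{C}$ and harmonic on each connected component $\Omega$ of $\mathbb{C} \setminus \mathrm{supp}(\mu_1)$ that meets $D(z_0, \delta')$. For a bounded component compactly contained in $D(z_0, \delta')$, the topological boundary $\partial \Omega$ lies in $K \cap \overline{D(z_0, \delta')}$, and the minimum principle for harmonic functions (handling the upper-semi-continuous boundary behavior via the Perron--Wiener--Brelot solution or a barrier argument) propagates the lower bound from $\partial \Omega$ into $\Omega$. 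The delicate case is components whose boundary reaches $\partial D(z_0, \delta')$ or is topologically irregular, and this is the real obstruction: the classical proof (see \cite[Theorem~3.1.3]{Ra-book}) handles it by a dyadic-covering / capacity argument that controls the local contribution uniformly. Granting this extension, combining with the USC upper bound delivers continuity of $U_\mu$ at $z_0$, and varying $z_0$ over $K$ finishes the proof.
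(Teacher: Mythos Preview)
The paper does not prove this fact; it is quoted from \cite[Theorem 3.1.3]{Ra-book} and used as a black box inside the proof of Lemma \ref{lem:cvg-arcs-0}. So there is no in-paper argument to compare against.

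On the substance of your sketch: stages one and two are correct and standard, and your reduction to establishing lower semi-continuity of $U_{\mu_1}$ at $z_0$ is the right framing. The gap you flag in stage three is real, but your proposed resolution via the minimum principle on connected components of $\mathbb C\setminus\mathrm{supp}(\mu_1)$ is not how the cited reference proceeds, and your description of that proof as a ``dyadic-covering / capacity argument'' is inaccurate. Ransford's proof (the Maria--Frostman continuity principle) uses instead a one-line geometric inequality: if $w^\ast\in K$ is a nearest point of $K$ to $z$, then for every $w\in K$ one has $|z-w|\geq \tfrac12|w^\ast-w|$, whence
\[
U_{\mu_1}(z)\;\geq\; U_{\mu_1}(w^\ast)\;-\;\mu_1(\mathbb C)\log 2.
\]
As $z\to z_0$ the nearest point $w^\ast$ tends to $z_0$ along $K$, so the transferred continuity of $U_{\mu_1}$ on $K$ controls $U_{\mu_1}(w^\ast)$; and since finiteness of $U_\mu(z_0)$ forces $\mu(\{z_0\})=0$, the localization radius can be chosen so that $\mu_1(\mathbb C)$ is as small as one likes. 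This sidesteps entirely the topological complications (unbounded components, irregular boundary points) that obstruct your harmonic-extension approach.
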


In the light of Fact \ref{fact:cont-prin} is suffices to consider the case that $(z_n)\subset I$. Let $z_0=re^{i t_0}$, $a\leq t_0\leq b$ and let 
$z_n = r e^{i t_n}$ where $a\leq t_n \leq b$ and $t_n \to t_0$. Then, we may write
	\begin{align*}
		U_\mu (z_n) &= \int_a^b \log |re^{i\theta} -re^{i t_n}| \, d\theta = \int_a^b \log |re^{i(\theta +t_0- t_n)} -re^{i t_0}|\, d\theta \\
					&= \int_{a+t_0-t_n}^{b+t_0-t_n} \log |re^{i\phi} - re^{it_0}|\, d\phi \to  \int_a^b \log |re^{i\phi} - re^{it_0}| \, d\phi = U_\mu(z_0),  
	\end{align*}
as required. \prend

\medskip

\noindent {\it Proof of Theorem \ref{thm:B1}.} We distinguish the following cases.

\medskip

\noindent {\bf Case I:} The closed arc $\overline I$ is zero-free set for $f$, i.e., $Z_f \cap \overline {I} = \emptyset$.  

\smallskip

Since $f$ is continuous, there exists $\delta>0$ such that $|f(z)|\geq \delta$ for all $z\in \overline {I}$.
The uniform convergence shows that there exists $n_0=n_0(\delta) \in \mathbb N$ such that $|f_n(z)| \geq \delta/2$ for all $z\in \overline{I}$ 
and for all $n\geq n_0$. It  follows that $\log |f_n| \to \log |f|$ uniformly on the compact set $ \overline{I}$ which contains $I$, 
thus \eqref{eq:cvg-arc} readily follows.

\medskip

\noindent {\bf Case II:} The arc $\overline {I}$ contains roots of $f$ -- 
say $Z_f \cap \overline{I} = \{z_1, \ldots, z_p \}$, where $z_j$ comes with multiplicity $m_j$, $j=1,2,\ldots,p$. 

\smallskip

This case is more involved since one has to show that the sequence of roots $(z_j^{n})_{j=1}^p$ 
of $f_n$ do not cause any obstruction to the convergence of the integrals despite the unboudedness of the integrands $\log|z-z_j|$. 
We proceed with the details. There exists\footnote{e.g., $0<r< \frac{1}{2} \min\{{\rm dist}(\overline{I}, U^c),  |z_k-z_\ell | : 1\leq k < \ell \leq p \}$.} $r>0$ 
such that $\overline{D(z_k, r)} \subset U$, $\overline{D(z_k, r)} \cap \overline {D(z_\ell, r)} = \emptyset$ for $k\neq \ell$, 
and $Z_f \cap \overline{D(z_k, r)} = \{z_k\}$ for $k=1,\ldots,p$. By Hurwitz's theorem \cite[Chapter 5]{AN} there exists $N_0\in \mathbb N$ such that
$|Z_f \cap D(z_k, r)| = m_k$ for all $n\geq N_0$ and $k=1,\ldots,p$. We set $I_k := \overline{I} \cap \overline{D(z_k, r/2)}$ and $I^\ast : = I \setminus \bigcup_{k=1}^p I_k$. Next, 
we write
	\begin{align}
		\int_I \log |f_n| = \int_{I^\ast} \log |f_n| + \sum_{k=1}^p \int_{I_k} \log |f_n|.
	\end{align}
Note that $I^\ast$ is finite union of non-overlapping arcs and $f$ is zero-free on $\overline{I^\ast}$. Hence, for the first integral we may apply Case I component-wise.
It remains to establish the convergence for the integrals over the arcs containing single roots. We isolate this case in the following technical lemma:

\begin{lemma} \label{lem:cvg-arc-rt}
	Let $z_0\in \mathbb C$, $\rho>0$ and let $h_n,h: D(z_0, \rho) \to\mathbb C$ with $h_n\to h$ uniformly on compact subsets of $D(z_0, \rho)$. Suppose that 
	$z_0$ in the unique root of $h$ in $D(z_0, \rho)$. Then, for any closed arc $I$ so that $z_0 \in I \subset D(z_0, \rho)$ we have
		\begin{align}
			\int_I \log|h_n| \to \int_I \log |h|, \quad n\to \infty.  
		\end{align}
\end{lemma}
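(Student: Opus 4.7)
The plan is to reduce Lemma \ref{lem:cvg-arc-rt} to Lemma \ref{lem:cvg-arcs-0} by factoring out the zeros of $h_n$ near $z_0$ so that the only non-smooth part of $\log|h_n|$ comes from a finite product of factors $|z-z_j^{(n)}|$ whose singular points converge to $z_0$. Since $h$ has $z_0$ as its unique zero in $D(z_0,\rho)$, one can write $h(z)=(z-z_0)^m g(z)$ with $m\geq 1$ and $g$ holomorphic and non-vanishing on $D(z_0,\rho)$. First I would fix $\rho'<\rho$ small enough that $I\subset D(z_0,\rho')$ and $\overline{D(z_0,\rho')}\subset D(z_0,\rho)$; then $h$ is bounded away from zero on $\partial D(z_0,\rho')$.

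Next, by Hurwitz's theorem (or equivalently the argument principle together with uniform convergence of $h_n'/h_n$ on $\partial D(z_0,\rho')$), for all $n$ sufficiently large $h_n$ has exactly $m$ zeros $z_1^{(n)},\ldots,z_m^{(n)}$ in $D(z_0,\rho')$ (counted with multiplicity), and each $z_j^{(n)}\to z_0$. This allows the factorization $h_n(z)=\prod_{j=1}^m (z-z_j^{(n)})\, g_n(z)$ with $g_n$ holomorphic and non-vanishing on $D(z_0,\rho')$. On $\partial D(z_0,\rho')$ one has $h_n\to h$ uniformly and $\prod_{j=1}^m (z-z_j^{(n)})\to (z-z_0)^m$ uniformly with denominators bounded away from $0$, so $g_n\to g$ uniformly on $\partial D(z_0,\rho')$. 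Since $g_n-g$ is holomorphic on $D(z_0,\rho')$ and continuous on its closure, the maximum modulus principle upgrades this to uniform convergence on $\overline{D(z_0,\rho')}$. In particular, $\log|g_n|\to \log|g|$ uniformly on $I$, which yields $\int_I \log|g_n|\to \int_I \log|g|$.

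Finally, writing $\log|h_n(z)|=\sum_{j=1}^m \log|z-z_j^{(n)}|+\log|g_n(z)|$ and $\log|h(z)|=m\log|z-z_0|+\log|g(z)|$, I would apply Lemma \ref{lem:cvg-arcs-0} separately to each sequence $z_j^{(n)}\to z_0$, obtaining $\int_I \log|z-z_j^{(n)}|\,d\theta\to \int_I \log|z-z_0|\,d\theta$ for $j=1,\ldots,m$. Summing these $m$ limits and adding the uniform-convergence contribution from $g_n$ completes the proof.

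The principal obstacle is the moving logarithmic singularity: on the arc $I$ the integrands $\log|h_n|$ have singular points at the roots $z_j^{(n)}$, so no dominated-convergence argument works directly and termwise bounds fail as $z_j^{(n)}$ approaches $I$. The clean factorization above isolates this difficulty into the single-factor problem $\int_I \log|z-z_j^{(n)}|\,d\theta\to \int_I \log|z-z_0|\,d\theta$, which is exactly the statement of Lemma \ref{lem:cvg-arcs-0} (derived there from the continuity principle for logarithmic potentials of arc-length measures). Everything else in the reduction is standard complex analysis (Hurwitz's theorem and the maximum modulus principle), so the real content of the proof is funneled precisely to the point where that lemma can be applied.
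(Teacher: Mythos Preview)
Your proposal is correct and follows essentially the same approach as the paper: factor out the zeros via Hurwitz, reduce the singular part to Lemma \ref{lem:cvg-arcs-0}, and handle the non-vanishing quotient by a boundary-to-interior estimate. The only cosmetic difference is that you invoke the maximum modulus principle for $g_n-g$ where the paper uses Cauchy's integral formula; both yield the needed uniform convergence on the closed sub-disk.
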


\noindent {\it Proof of Lemma \ref{lem:cvg-arc-rt}.} Let $\rho/2 < \delta < \rho$ so that $I\subset D(z_0, \delta)$. 
Hurwitz's theorem \cite[Chapter 5]{AN} yields the existence of $n_0\in \mathbb N$ so that $|Z_{h_n} \cap D(z_0, \delta) | =\{z_{n,1},\ldots, z_{n,m_0}\}$\footnote{Here the zero-set of $h_n$ is viewed as multiset, i.e., $z_{n,j}$ are not necessarily distinct.} for all $n\geq n_0$, 
where $m_0$ is the multiplicity of $z_0$, and $z_{n,j} \to z_0$ as $n\to \infty$ for $j=1,2,\ldots,m_0$. Next, we recall the following standard fact \cite[p.\ 22]{AN}:
\begin{fact}
	If $\phi :D(z_0, \rho) \to \mathbb C$ is holomorphic function with roots $w_1, \ldots, w_m$ (not necessarily distinct) then there exists $\phi_0: D(z_0, \rho) \to \mathbb C$
	holomorphic with $\phi_0(w) \neq 0$ for all $w\in D(z_0, \rho)$ and
		\begin{align}
			\phi(w) = \phi_0(w) \prod_{j=1}^m (w-w_j), \quad w\in D(z_0, \rho).
		\end{align}
\end{fact}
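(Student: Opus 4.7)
The plan is to proceed by induction on $m$, peeling off one factor $(w-w_j)$ at a time. The base case $m=0$ is trivial: $\phi$ has no zeros in $D(z_0,\rho)$, so setting $\phi_0 := \phi$ (with the empty product understood as $1$) gives the conclusion. For the inductive step, the crux is the following single-root factorization sub-lemma: if $\phi$ is holomorphic on $D(z_0,\rho)$ and vanishes at some $a \in D(z_0,\rho)$, then there exists a holomorphic function $g$ on $D(z_0,\rho)$ such that $\phi(w) = (w-a)\,g(w)$.

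I would prove this sub-lemma via Riemann's removable singularity theorem. Define
\begin{align*}
    g(w) := \begin{cases} \phi(w)/(w-a), & w \in D(z_0,\rho)\setminus\{a\}, \\ \phi'(a), & w = a. \end{cases}
\end{align*}
On the punctured disk $D(z_0,\rho)\setminus\{a\}$, $g$ is holomorphic as the quotient of holomorphic functions with non-vanishing denominator. Since $\phi(a) = 0$, the differentiability of $\phi$ at $a$ gives $\lim_{w \to a}\phi(w)/(w-a) = \phi'(a)$, so $g$ is continuous on all of $D(z_0,\rho)$ and in particular bounded in a neighborhood of $a$. Riemann's removable-singularity theorem then promotes $g$ to a holomorphic function on the full disk, and by construction $\phi(w) = (w-a)\,g(w)$ on $D(z_0,\rho)$.

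Applying the sub-lemma to $\phi$ at $a = w_1$ produces $\phi(w) = (w-w_1)\,g(w)$ with $g$ holomorphic on $D(z_0,\rho)$. Next I would verify that $g$ has exactly $w_2,\ldots,w_m$ as its zeros (counted with multiplicity): at any $c \in D(z_0,\rho)$ the order of vanishing of $\phi$ at $c$ equals the order of vanishing of $g$ at $c$ plus the order of $(w-w_1)$ at $c$, the latter being $1$ if $c = w_1$ and $0$ otherwise. Hence $g$ inherits from $\phi$ all zeros, with the multiplicity at $w_1$ reduced by one---which, by the hypothesis that $w_1,\ldots,w_m$ enumerate the roots of $\phi$ with multiplicity, is exactly the multiset $\{w_2,\ldots,w_m\}$ after relabeling. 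Invoking the inductive hypothesis on $g$ yields a nowhere-vanishing holomorphic $\phi_0$ with $g(w) = \phi_0(w) \prod_{j=2}^m (w-w_j)$, and multiplying by $(w-w_1)$ gives the desired factorization $\phi(w) = \phi_0(w) \prod_{j=1}^m (w-w_j)$.

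The argument is essentially routine once the removable-singularity step is in place; the only point that calls for minor care is the multiplicity bookkeeping in the inductive hypothesis---specifically, that the enumerated list $w_1,\ldots,w_m$ exhausts the zeros of $\phi$ with correct multiplicity, which is precisely what ensures that the final $\phi_0$ has no remaining zeros.
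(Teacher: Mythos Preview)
Your proof is correct; the paper does not actually prove this fact but simply cites it as a standard result from \cite[p.\ 22]{AN}. Your induction via the removable-singularity theorem is the textbook route and there is nothing to compare.
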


Applying the fact for each $h,h_n$ we find holomorphic functions $h_0, h_{n,0}: D(z_0,\rho) \to \mathbb C$ so that 
\begin{align}
	h(z) = (z-z_0)^{m_0} h_0(z), \quad h_n(z) = \prod_{j=1}^{m_0}(z-z_{n,j}) h_{n,0}(z), \quad h_0(z), h_{n,0}(z) \neq 0, \; z\in \overline{D(z_0,\delta)}.
\end{align}

Thereby, we may write
	\begin{align}
		\int_I \log |h_n| = \sum_{j=1}^{m_0} \int_I \log |z-z_{n,j}| d\mu(z) + \int_I \log |h_{n,0}|,
	\end{align}
where $\mu$ stands for the arc-length measure on $I$. In view of Lemma \ref{lem:cvg-arcs-0} we have 
	\begin{align} 
		\int_I \log |z-z_{n,j}| \, d\mu(z) \to \int_I \log |z-z_0| \, d\mu(z), \quad j=1,2,\ldots,m_0. 
	\end{align}
For the other integral, since $h_0(z)\neq 0$ for $z\in\overline{ D(z_0,\delta)}$, 
it suffices to show that $h_{n,0} \to h$ uniformly on $\overline {D(z_0, \delta)}$. Clearly, $h_{n,0} \to h_0$ uniformly
on $\overline{D(z_0,\delta)} \setminus D(z_0, \rho/2)$. It suffices to show that $h_{n,0}\to h_0$ uniformly on $\overline{D(z_0, \rho/2)}$. To this 
end, we invoke Cauchy's integral theorem \cite[Section 2.2]{AN}: There exists $N_0\in \mathbb N$ so that $\max_{j\leq m_0} |z_{n,j}-z_0| \leq \rho/2$ for all $n\geq N_0$,
and for any $|z-z_0| \leq \rho/2$ we may write
	\begin{align*}
		\left| h_{n,0}(z) -h_0(z) \right| \leq  \frac{1}{2\pi } \int_{C(z_0,\delta)} \left| \frac{ h_{n,0}(\zeta)-h_0(\zeta) }{\zeta-z} \right| \, d\zeta 
			\leq \frac{\delta}{\delta -\rho/2} \| h_{n,0} -h_0\|_{C(z_0,\delta)}.
	\end{align*}
The claim follows by the uniform convergence $h_{n,0} \to h_0$ on $C(z_0,\delta)$. The proof of Lemma \ref{lem:cvg-arc-rt} is complete. \prend


\end{document}